\documentclass[12pt]{article}
\textwidth= 6.5in
\textheight= 9.0in
\topmargin = -20pt
\evensidemargin=0pt
\oddsidemargin=0pt
\headsep=25pt
\parskip=10pt
\font\smallit=cmti10

\makeatletter

\renewcommand\section{\@startsection {section}{1}{\z@}
{-30pt \@plus -1ex \@minus -.2ex}
{2.3ex \@plus.2ex}
{\normalfont\normalsize\bfseries}}

\renewcommand\subsection{\@startsection{subsection}{2}{\z@}
{-3.25ex\@plus -1ex \@minus -.2ex}
{1.5ex \@plus .2ex}
{\normalfont\normalsize\bfseries}}

\renewcommand{\@seccntformat}[1]{\csname the#1\endcsname. }

\makeatother

\newtheorem{thm}{Theorem}[section]
\newtheorem{prop}[thm]{Proposition}
\newtheorem{lemma}[thm]{Lemma}
\newtheorem{remark}[thm]{Remark}

\newenvironment{proof}{\par\noindent\textsc{Proof}}{\medskip}
\def\qed{\relax\ifmmode\hskip2em \Box\vspace{-7pt}
 \else\unskip\nobreak\hskip1em $\Box$\fi}

\usepackage{amsmath,amsfonts,amssymb,latexsym}

\usepackage{enumerate} 


\newcommand{\n}[2]{\put(#1,#2){\circle{1.10}}} 
\newcommand{\y}[2]{\put(#1,#2){\circle*{1.10}}} 

\usepackage{float}
\newfloat{d2}{htb}{der}[section]
\floatname{d2}{Table}


\begin{document}
\vskip 40pt

\begin{center}
  \uppercase{\bf Small and Large Weights in Steinhaus Triangles}\\
   \vskip 20pt
  {\bf Josep M. Brunat and
    Montserrat Maureso} \\
  {\smallit Departament de Matem\`atica Aplicada, Universitat
    Polit\`ecnica  de Catalunya, Barcelona, Catalunya}\\
  {\tt josep.m.brunat@upc.edu,
    montserrat.maureso@upc.edu}\\ 
  \vskip 10pt
\end{center}
\vskip 30pt

\centerline{\bf Abstract}

\noindent
Let $\{0=w_0<w_1<w_2<\ldots<w_m\}$ be the set of weights of binary Steinhaus triangles of size $n$,
and let $W_i$ be the set of sequences in $\mathbb{F}_2^n$ that generate triangles
of weight  $w_i$.
We obtain  the values of $w_i$ and
the corresponding sets $W_i$ for $i\in\{1,2,3,m\}$, and partial results about $w_{m-1}$ and $W_{m-1}$.

\baselineskip=15pt
\vskip 30pt

\section{Introduction}

An extended abstract of this paper has been publised under the title \emph{Extreme Weights in Steinhaus Triangles}~\cite{BrMa0}.

Let $\mathbb{F}_2$ be the field of order $2$ and
$\mathbf{x}=(x_0,\,\ldots,\,x_{n-1})\in\mathbb{F}_2^n$ a binary sequence of
length $n$. We shall often write sequences $\mathbf{x}$ as  words $\mathbf{x}=x_0x_1\ldots x_{n-1}$.
The \emph{derivative} of $\mathbf{x}$ is the sequence
$\partial\mathbf{x}=(x_0+x_1,\,x_1+x_2,\,\ldots,\, x_{n-2}+x_{n-1})$.
It is clear that the mapping $\partial\colon\mathbb{F}_2^n\rightarrow\mathbb{F}_2^{n-1}$ is linear
and exhaustive, and that its kernel is formed by the two sequences
$\mathbf{0}=(0,0,\ldots,0)$ and $\mathbf{1}=(1,1,\ldots,1)$.
If
$\mathbf{y}=(y_0,\ldots,y_{n-2})\in\mathbb{F}_2^{n-1}$, then the two sequences
$\mathbf{x}\in\mathbb{F}_2^n$ such that $\partial \mathbf{x}=\mathbf{y}$ are
 $$
 \mathbf{x}_0=(0,\, y_0,\, y_0+y_1,\, y_0+y_1+y_2,\, \ldots,\, y_0+y_1+y_2+\ldots+y_{n-2})
 \quad \mbox{and}\quad \mathbf{x}_1=\mathbf{1}+\mathbf{x}_0,
 $$
and they are called the \emph{primitives} of $\mathbf{y}$.

We define $\partial^0\mathbf{x}=\mathbf{x}$,
$\partial^1\mathbf{x}=\partial\mathbf{x}$ and, for $2\le i\le n-1$,
$\partial^i\mathbf{x}=\partial\partial^{i-1}\mathbf{x}$.  The
\emph{Steinhaus triangle} of the sequence $\mathbf{x}$ is the sequence
$T(\mathbf{x})$ formed by $\mathbf{x}$ and its derivatives:
$T(\mathbf{x})=(\mathbf{x},\,\partial\mathbf{x},\,
\ldots,\,\partial^{n-1}\mathbf{x})$.  For $i\in\{0,\ldots,n-1\}$, the
component $\partial^i\mathbf{x}$ of $T(\mathbf{x})$ is the $i$-th
\emph{row} of the triangle.  Figure~\ref{fexemple1} represents
$T(\mathbf{x})$ for the sequence $\mathbf{x}=(0,\, 0,\, 0,\, 1,\, 0,\,
0,\, 1)$. The grey and white circles represent ones and zeros,
respectively; the first row corresponds to $\mathbf{x}$ and the
following rows to the iterated derivatives. Each entry of the triangle
is the binary sum of the two values immediately above it.
\begin{figure}[htb]
\centering
\setlength{\unitlength}{2.35mm}
\begin{picture}(7,9)
\n{0.5}{7.5} \n{1.5}{7.5} \n{2.5}{7.5} \y{3.5}{7.5} \n{4.5}{7.5} \n{5.5}{7.5} \y{6.5}{7.5}
\n{1.0}{6.5} \n{2.0}{6.5} \y{3.0}{6.5} \y{4.0}{6.5} \n{5.0}{6.5} \y{6.0}{6.5}
\n{1.5}{5.5} \y{2.5}{5.5} \n{3.5}{5.5} \y{4.5}{5.5} \y{5.5}{5.5}
\y{2.0}{4.5} \y{3.0}{4.5} \y{4.0}{4.5} \n{5.0}{4.5}
\n{2.5}{3.5} \n{3.5}{3.5} \y{4.5}{3.5}
\n{3.0}{2.5} \y{4.0}{2.5}
\y{3.5}{1.5}
\end{picture}
\caption{Steinhaus triangle  $T(\mathbf{x})$ of the sequence
  $\mathbf{x}=(0,\, 0,\, 0,\, 1,\, 0,\, 0,\, 1)$.}
\label{fexemple1}
\end{figure}
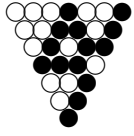
It is easily shown by induction that the $\ell$-th entry of the $j$-th row $\partial^j\mathbf{x}$ is
$$
(\partial^j\mathbf{x})_\ell=\sum_{k=0}^j{j\choose\ell}x_{j+\ell},\qquad (j\in\{0,\ldots,n-1\},\quad
\ell\in\{0,\ldots,n-1-j\}).
$$
If $\mathbf{x}\in\mathbb{F}_2^n$ and $k\ge 1$ is an integer, a \emph{subtriangle} of size $k$ of $T(\mathbf{x})$ is
the Steinhaus triangle generated by $k$ consecutive entries of a row $\partial^j\mathbf{x}$.

In 1958, H.~Steinhaus~\cite{Steinhaus} posed the question for which sequences
$\mathbf{x}\in\mathbb{F}_2^n$ the triangle $T(\mathbf{x})$ is
balanced, that is, $T(\mathbf{x})$ has as many zeroes as ones. He
observed that no sequence of length $n\equiv 1,2\ (\textrm{mod}\ 4)$
produces a balanced triangle, so the problem was to decide if they
exist for lengths $n\equiv 0,3 \ (\textrm{mod}\ 4)$.
H.~Harborth~\cite{Harborth} answered the question in the affirmative
by constructing examples of such sequences.  S.~Eliahou et al.
studied binary sequences generating balanced triangles with some
additional condition: sequences of length $n$, all of whose initial
segments of length $n-4t$ for $0\le t\le n/4$ generate balanced
triangles~\cite{ElHa}, symmetric and anti-symmetric
sequences~\cite{ElHa2}, and sequences with zero
sum~\cite{ElMaRe}. F.M. Malyshev and E.V.  Kutyreva~\cite{MaKu}
estimated the number of Steinhaus triangles (which they call Boolean
Pascal triangles) of sufficiently large size $n$ containing a given
number $\omega\le kn$ ($k>0$) of ones.  More recently, J.~Chappelon and S.~Eliahou~\cite{Chappelon,ChEl}
considered a generalization by J. C. Molluzzo~\cite{Molluzo} to sequences with entries in $\mathbb{Z}_m$,
with the condition that every element in $\mathbb{Z}_m$ has the same
multiplicity in the triangle. Steinhaus triangles appear in the
context of cellular automate, see A.~Barb\'e~\cite{Barbe2,Barbe3,Barbe} and J. Chappelon~\cite{Chappelon4,Chappelon5}.
In this context, A. Barb\'e~\cite{Barbe} has
studied some properties related to symmetries. In~\cite{BrMa} we characterized
Steinhaus triangles with rotational and dihedral symmetry.

The \emph{weight} of a  sequence $\mathbf{x}=(x_0,\ldots,x_{n-1})$ is the number $|\mathbf{x}|$ of ones that contains:
$|\mathbf{x}|=\#\{i\in\{0,\ldots,n-1\}: x_i=1\}$. The  \emph{weight}
of the triangle $T(\mathbf{x})$, denoted by $|T(\mathbf{x})|$ is the sum of the weights of its rows:
$$
|T(\mathbf{x})|=\sum_{i=0}^{n-1}|\partial^i\mathbf{x}|.
$$

The set $S(n)$ of Steinhaus triangles of size $n$ is an $\mathbb{F}_2$-vector
space and the mapping $T\colon\mathbb{F}_2^n\rightarrow S(n)$ defined
by $\mathbf{x}\mapsto T(\mathbf{x})$ is an isomorphism.  Then, the
vector space $S(n)$ can be seen as a linear code of length $n(n+1)/2$
and dimension $n$. In general, it is difficult to find the weight
distribution of the words of a linear code; in particular, this seems to be the case for the
code $S(n)$. Here, we focus on the smallest and largest values of the
weight distribution of $S(n)$. To be precise, let
$0=w_0<w_1<w_2<\ldots<w_{m-1}<w_m$ be all the weights of the triangles
of $S(n)$. For $i\in\{0,\ldots,m\}$, an $i$-\emph{sequence} is a sequence
$\mathbf{x}$ such that $|T(\mathbf{x})|=w_i$. We denote by $W_i$ the
set of $i$-sequences.  The number of entries in a Steinhaus triangle of size $n$ is $n(n+1)/2$.
The original problem was to decide if, for some $i$, the value of $w_i$ is $n(n+1)/4$. A natural question is to ask
which are the lowest and greatest values of the $w_i$.
Obviously, only one triangle exists with
weight $0$, which is that generated by the sequence $\mathbf{0}$, so
$w_0=0$ and $W_0=\{\mathbf{0}\}$.  Our goal is to determine $w_1$,  $w_2$, $w_3$ and $w_{m}$,  and the
corresponding sets $W_1$, $W_2$, $W_3$ and $W_m$. Furthemore, we give $w_{m-1}$ and $W_{m-1}$ for sequences of length $n\equiv 1\pmod{3}$ and a conjecture for $n\equiv 0,2\pmod{3}$. 

We shall assume the length $n$ of the sequences, the size $n$ of the
triangles, and the dependence on $n$ of the values $w_i$ and the sets
$W_i$. However, if it is convenient to make them explicit, we shall use
superscripts.  Thus, $\mathbf{x}^{(n)}$ means that the sequence
$\mathbf{x}$ has length $n$, and $w_i^{(n)}$ means the value $w_i$ for
triangles of size $n$, etc. Also, we denote by
$m=m(n)$ the maximum $i$ such that there exist $i$-sequences of length
$n$.

 If we apply a clockwise rotation of 120 degrees to the graphical representation of a Steinhaus triangle, 
we obtain the Steinhaus triangle generated by the sequence
of the \mbox{right-side} of the triangle read from top to bottom (the
sequence $r(\mathbf{x})=(1,1,1,0,1,1,1)$ in  Figure~\ref{fexemple2}, left). Given $\mathbf{x}$, we define
$r(\mathbf{x})$ as the sequence which has as $j$-th entry the last entry of the $j$-th row of $T(\mathbf{x})$, that is,
\begin{equation}
\label{right}
(r(\mathbf{x}))_j=\sum_{\ell=0}^{j}{j\choose \ell}x_{n-1-j+\ell}.
\end{equation}
Applying a rotation of 240 degrees to the graphical representation of a Steinhaus triangle, 
we obtain the Steinhaus triangle generated by the sequence
of the left-side of the triangle read from bottom to top (the
sequence $\ell(\mathbf{x})=(1,0,0,1,0,0,0)$ in Figure~\ref{fexemple2}, left). Given $\mathbf{x}$, we define
the sequence $\ell(\mathbf{x})$ as the sequence which has as $j$-th entry the first
entry of the $(n-1-j)$-th row of $T(\mathbf{x})$, that is,
$$
(\ell(\mathbf{x}))_j=\sum_{\ell=0}^{n-1-j}{ n-1-j\choose \ell}x_\ell.
$$
 Applying a symmetry with respect to the height of the inferior vertex, we obtain
the triangle generated by the sequence which consists in reading $\mathbf{x}$ from right to left,
($i(\mathbf{x})=(1,0,0,1,0,0,0)$ in Figure~\ref{fexemple2}, right). Given $\mathbf{x}$, we define
$i(\mathbf{x})$ by
$$
i(\mathbf{x})=(x_{n-1},x_{n-2},\ldots,x_1,x_0).
$$
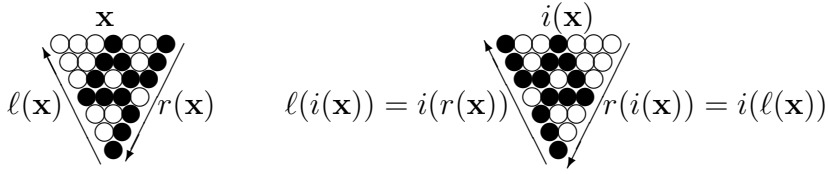
\begin{figure}[tb]
\centering
\setlength{\unitlength}{2.35mm}
\hspace{-2truecm}
\begin{picture}(7,7)
\n{0.5}{7.5} \n{1.5}{7.5} \n{2.5}{7.5} \y{3.5}{7.5} \n{4.5}{7.5} \n{5.5}{7.5} \y{6.5}{7.5}
\n{1.0}{6.5} \n{2.0}{6.5} \y{3.0}{6.5} \y{4.0}{6.5} \n{5.0}{6.5} \y{6.0}{6.5}
\n{1.5}{5.5} \y{2.5}{5.5} \n{3.5}{5.5} \y{4.5}{5.5} \y{5.5}{5.5}
\y{2.0}{4.5} \y{3.0}{4.5} \y{4.0}{4.5} \n{5.0}{4.5}
\n{2.5}{3.5} \n{3.5}{3.5} \y{4.5}{3.5}
\n{3.0}{2.5} \y{4.0}{2.5}
\y{3.5}{1.5}
\put(7.5,7.5){\vector(-1,-2){3.3}} 
\put(2.8,0.7){\vector(-1,2){3.3}} 
\put(2.5,8.5){$\mathbf{x}$}
\put(6,3.5){$r(\mathbf{x}$)}
\put(-2.5,3.5){$\ell(\mathbf{x}$)}
\end{picture}
\hspace{4truecm}
\begin{picture}(7,7)
\y{0.5}{7.5} \n{1.5}{7.5} \n{2.5}{7.5} \y{3.5}{7.5} \n{4.5}{7.5} \n{5.5}{7.5} \n{6.5}{7.5}
\y{1.0}{6.5} \n{2.0}{6.5} \y{3.0}{6.5} \y{4.0}{6.5} \n{5.0}{6.5} \n{6.0}{6.5}
\y{1.5}{5.5} \y{2.5}{5.5} \n{3.5}{5.5} \y{4.5}{5.5} \n{5.5}{5.5}
\n{2.0}{4.5} \y{3.0}{4.5} \y{4.0}{4.5} \y{5.0}{4.5}
\y{2.5}{3.5} \n{3.5}{3.5} \n{4.5}{3.5}
\y{3.0}{2.5} \n{4.0}{2.5}
\y{3.5}{1.5}
\put(7.5,7.5){\vector(-1,-2){3.5}} 
\put(2.8,0.7){\vector(-1,2){3.5}} 
\put(2.5,8.5){$i(\mathbf{x})$}
\put(6,3.5){$r(i(\mathbf{x}))=i(\ell(\mathbf{x}))$}
\put(-12,3.5){$\ell(i(\mathbf{x}))=i(r(\mathbf{x}))$}
\end{picture}
\caption{The sequences $r(\mathbf{x})$, $\ell(\mathbf{x})$ and $i(\mathbf{x})$ for
  $\mathbf{x}=(0,\, 0,\, 0,\, 1,\, 0,\, 0,\, 1)$.}
\label{fexemple2}
\end{figure}

The mappings
$r,\ell,i\colon \mathbb{F}_2^n\rightarrow \mathbb{F}_2^n$ are linear
and, as $r\ell=\ell r=id$ and $i^2=id$, they are bijectives. Obviously,
$\{id, r, \ell, i, ri, \ell i\}$ is a group of automorphisms of
$\mathbb{F}_2^n$ isomorphic to the dihedral  group  $D_6$. Two sequences $\mathbf{x}$
and $\mathbf{y}$ are \emph{equivalent} if there exists $f\in D_6=\{id, r, \ell, i, ri, \ell i\}$
such that $\mathbf{y}=f(\mathbf{x})$. A triangle  $T(\mathbf{x})$ is univocally determined
by $\mathbf{x}$, as well as by any of the sequences $r(\mathbf{x})$, $\ell(\mathbf{x})$ and $i(\mathbf{x})$; in particular,
triangles generated by equivalent sequences have the same weight.
Note that, given $\mathbf{x}$,  the six triangles $T(f(\mathbf{x}))$ with $f\in D_6$ are not necessarily
distinct  because of possible symmetries. For instance, in Figure~\ref{fexemple2},
we have that $r(\mathbf{x})$ and $\ell(i\mathbf{x})$ are equal, so both
generate the same triangle.

When writing sequences in form of words, we shall use a dot to represent concatenation. Thus, $101\cdot 01=10101$.
We also shall use the following notation: $\overline{x_1x_2\ldots x_p}$ stands for
the infinite sequence obtained by repeating $x_1x_2\ldots x_p$,
and $\overline{x_1x_2\ldots x_p}[n]$ is the sequence formed
by the first $n$ entries of $\overline{x_1x_2\ldots x_p}$. For instance,
$\overline{100}[4]=1001$, $\overline{100}[5]=10010$, and
$\overline{100}[6]=100100$.

\section{The cases $n\le 4$}

Proposition~\ref{casos n<=4} summarizes the sequence $w_1<\ldots <w_m$ and the sets
$W_1,\ldots, W_m$ when $n\le 4$. The proof consists of a simple checking and thus is omitted.

\begin{prop}
\label{casos n<=4}
\begin{enumerate}[\rm(i)]\setlength{\topsep}{0pt}
\item If $n=1$, then $m(1)=1$, $w_1=1$, and $W_1=\{\mathbf{1}\}$.
\item If $n=2$, then $m(2)=1$, $w_1=2$, and $W_1=\{11, \,10, \,01\}$.
\item If $n=3$, then $m(3)=2$, $w_1=3$, $w_2=4$, $W_1=\{111,\,100,\,001,\,010\}$ and $W_2=\{110, 011, 101\}$.
\item If $n=4$, then $m(4)=4$, $w_1=4$, $w_2=5$, $w_3=6$, $w_4=7$,  and
$$
\begin{array}{ll}
W_1=\{1111,\,1000,\,0001\},
& W_2 =\{0100, \,0010, \,1100, \,1010, \,0101, \,0011\},\\
W_3=\{1001, 0110, 1110, 0111\},
& W_4=\{1101,\,1011\}.
\end{array}
$$
\end{enumerate}
\end{prop}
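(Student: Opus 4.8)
The statement is a finite assertion about $n\in\{1,2,3,4\}$, so the plan is to carry out the exhaustive verification the authors omit, organized so that the bookkeeping stays transparent. First, for each such $n$ I would list all $2^n$ sequences $\mathbf{x}\in\mathbb{F}_2^n$ and build $T(\mathbf{x})$ row by row from the recursion $\partial^i\mathbf{x}=\partial\partial^{i-1}\mathbf{x}$; summing the row weights $|\partial^i\mathbf{x}|$ over $i\in\{0,\ldots,n-1\}$ gives $|T(\mathbf{x})|$. Sorting the distinct values obtained yields the chain $0=w_0<w_1<\cdots<w_m$, hence $m(n)$, and grouping the sequences by their weight produces the sets $W_i$. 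Since $w_0=0$ and $W_0=\{\mathbf{0}\}$ are already known, only the nonzero weights must be recorded.

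To reduce the number of triangles actually computed I would use the fact established earlier that equivalent sequences have equal triangle weight: the group $D_6=\{id,r,\ell,i,ri,\ell i\}$ acts on $\mathbb{F}_2^n$ and $|T(\cdot)|$ is constant on each orbit, so it suffices to evaluate one representative per orbit and then fill each $W_i$ as a union of orbits. For $n\le 3$ the orbits are few and the full table can be written out at once. For $n=4$ the sixteen sequences split into orbits as follows: $1000$ has orbit $\{1000,0001,1111\}$ and gives $w_1=4$; $0100$ has an orbit of full size six and gives $w_2=5$; the weight-$6$ class is a union of two orbits, namely the singleton $\{0110\}$ consisting of the totally symmetric sequence together with the orbit of $1001$, giving $w_3=6$; and $1101$ has orbit $\{1101,1011\}$, giving $w_4=7$. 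Applying the six automorphisms to each representative then reconstitutes exactly the sets $W_1,\ldots,W_4$ listed.

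There is no genuine obstacle here, since the verification is a bounded computation with no estimates or limiting arguments; the only place where care is needed is in assembling the $W_i$ from orbit representatives without double counting. Multiplying an orbit size by $6$ is wrong whenever a representative has a nontrivial stabilizer, and for $n=4$ this already occurs: palindromes such as $1111$, $1001$, and $0110$ are fixed by $i$, while $0110$ is fixed by every element of $D_6$. The safe way is therefore the orbit-by-orbit listing above, after which two final consistency checks catch any transcription error: the totals must satisfy $|W_1|+\cdots+|W_m|=2^n-1$, and each listed $W_i$ must be closed under the action of $D_6$.
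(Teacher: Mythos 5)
Your proposal is correct and follows essentially the same route as the paper, which explicitly omits the proof as "a simple checking": a finite exhaustive verification of all $2^n$ triangles for $n\le 4$. The organization by $D_6$-orbits (with the correct caveat about nontrivial stabilizers, e.g.\ the fully symmetric sequence $0110$) and the final count $|W_1|+\cdots+|W_m|=2^n-1$ are sensible bookkeeping refinements but do not change the nature of the argument.
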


\begin{remark}\normalfont
  Each of the sets $W_1^{(1)}$, $W_1^{(2)}$, $W_2^{(3)}$, $W_1^{(4)}$,
  $W_2^{(4)}$, and $W_4^{(4)}$ are exactly one equivalence class with respect to the action of $D_6$. The
  set $W_1^{(3)}$ is the reunion of the two equivalence classes
  $\{111,\, 100,\, 001\}$ and $\{010\}$, and $W_3^{(4)}$ is the reunion of
  two equivalence class, $\{1001,\, 1110, \,0111\}$ and $\{0110\}$.
\end{remark}

\begin{remark}\normalfont
  Observe that two triangles can have different weight, but have all
  the subtriangles of a fixed size with the same weigth.  For
  instance, in the two triangles generated by $1011\in W_4^{(4)}$ and
  $0110\in W_3^{(4)}$ (see Figure~\ref{fdw}) all subtriangle of size 2
  have weight 2,  and all subtriangles of size 3 have weight 4. Nevertheless, $|T(1011)|=7$ and $|T(0110)|=6$. 
\end{remark}

\begin{figure}[htb]
\centering
\setlength{\unitlength}{2.35mm}
\begin{picture}(19.4,3.5)
\y{0.5}{3.5} \n{1.5}{3.5} \y{2.5}{3.5} \y{3.5}{3.5}
\y{1.0}{2.5} \y{2.0}{2.5} \n{3.0}{2.5}
\n{1.5}{1.5} \y{2.5}{1.5}
\y{2.0}{0.5}
\end{picture}
\begin{picture}(7,7)
\n{0.5}{3.5} \y{1.5}{3.5} \y{2.5}{3.5} \n{3.5}{3.5}
\y{1.0}{2.5} \n{2.0}{2.5} \y{3.0}{2.5}
\y{1.5}{1.5} \y{2.5}{1.5}
\n{2.0}{0.5}
\end{picture}
\caption{Triangles generated by sequences $1011$ and $0110$.}
\label{fdw}
\end{figure}
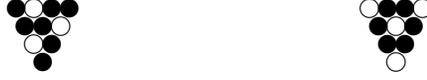

\section{The canonical basis}

In the following sections we use some bounds on the weight of the triangles
generated by the sequences of the canonical basis of
$\mathbb{F}_2^n$. Here, we obtain these bounds and, in some cases, the
exact value.

Consider the vectors of the canonical basis of $\mathbb{F}_2^n$:
$$
\begin{array}{l}
\mathbf{e}_0=1\cdot\overline{0}[n-1];\\
\mathbf{e}_k= \overline{0}[k]\cdot 1\cdot \overline{0}[n-k-1],\quad 1\le k\le n-2;\\
\mathbf{e}_{n-1}=\overline{0}[n-1]\cdot 1.
\end{array}
$$
Our first result gives the exact weight of the triangles $T(\mathbf{e}_k)$ for $k\in\{0,1,2,3\}$.

\begin{prop} Let  $n$ be the length of the considered sequences.
\label{pesos T(e0),T(e1),T(e2)}
\vspace{-\parskip}
\begin{enumerate}[\rm(i)]
\item $|T(\mathbf{e}_0)|=n$.
\item $|T(\mathbf{e}_1)|=\lfloor (3n-2)/2\rfloor$;
\item If   $n\equiv 2\pmod{4}$, then $|T(\mathbf{e}_2)|=2n-4$;\\
      otherwise,  $|T(\mathbf{e}_2)|=2n-3$.
\item if  $n\equiv 3\pmod{4}$, then   $|T(\mathbf{e}_3)|=(9n-27)/4$;\\
      otherwise, $|T(\mathbf{e}_3)|=\lfloor (9n-20)/4\rfloor$;\\
\end{enumerate}
\end{prop}
\begin{proof}
All proofs are by induction on $n$.

(i)  For $n=1$, it is trivial. Let $n\ge 2$ and assume the result true for $n-1$.
We have $\partial\mathbf{e}_0=\mathbf{e}_0^{(n-1)}$ and, by induction hypothesis, $|T(\mathbf{e}_0^{(n-1)})|=n-1$.
Then, $|T(\mathbf{e}_0)|=|\mathbf{e}_0|+|T(\mathbf{e}_0^{(n-1)})|=1+(n-1)=n$.

(ii) For $n=2$ and $n=3$ it can be checked immediately that $|T(\mathbf{e}_1^{(2)})|=2=\lfloor(3n-2)/2\rfloor$ and
$|T(\mathbf{e}_1^{(3)})|=3=\lfloor(3n-2)/2\rfloor$. Let $n\ge 4$ and assume that the result is true for values lower than $n$.
We have $\partial\mathbf{e}_1=11\cdot\overline{0}[n-3]$ and
$\partial^2\mathbf{e}_1=01\cdot\overline{0}[n-4]=\mathbf{e}_1^{(n-2)}$. Hence, by  induction hypothesis,
$$
|T(\mathbf{e}_1)|=|\mathbf{e}_1|+|\partial\mathbf{e}_1|+|T(\partial^2\mathbf{e}_1)|=1+2+|T(\mathbf{e}_1^{(n-2)})|
=3+\lfloor (3(n-2)-2)/2\rfloor=\lfloor(3n-2)/2\rfloor.
$$
\indent(iii) For  $n\in\{3,4,5,6,7\}$, it can be checked immediately that
$|T(\mathbf{e}_2^{(3)})|=3=2n-3$, $|T(\mathbf{e}_2^{(4)})|=5=2n-3$,
$|T(\mathbf{e}_2^{(5)})|=7=2n-3$,
$|T(\mathbf{e}_2^{(6)})|=8=2n-4$ and $|T(\mathbf{e}_2^{(7)})|=11=2n-3$. Assume $n\ge 8$ and that the result is true
for values lower than $n$. Observe that
$$
\begin{array}{lll}
\mathbf{e}_2^{(n)}=001\cdot\overline{0}[n-3],
& \partial\mathbf{e}_2^{(n)}=011\cdot\overline{0}[n-4],
& \partial^2\mathbf{e}_2^{(n)}=101\cdot\overline{0}[n-5], \\
\partial^3\mathbf{e}_2^{(n)}=111\cdot\overline{0}[n-6],
& \partial^4\mathbf{e}_2^{(n)}=001\cdot\overline{0}[n-7]=\mathbf{e}_2^{(n-4)}.
\end{array}
$$
Hence,
\begin{align*}
|T(\mathbf{e}_2^{(n)})|
=& |\mathbf{e}_2^{(n)}|+| \partial\mathbf{e}_2^{(n)}|+|\partial^2\mathbf{e}_2^{(n)}|
+|\partial^3\mathbf{e}_2^{(n)}|+|T(\mathbf{e}_2^{(n-4)})|\\
=& 1+2+2+3+|T(\mathbf{e}_2^{(n-4)})|\\
=& 8+|T(\mathbf{e}_2^{(n-4)})|.
\end{align*}
If $n\equiv 2\pmod{4}$, then
$|T(\mathbf{e}_2^{(n)})|=8+2(n-4)-4=2n-4$. Otherwise, $|T(\mathbf{e}_2^{(n)})|=8+2(n-4)-3=2n-3$.

(iv)  For  $n\in\{4,5,6,7\}$, it can be checked immediately that
$|T(\mathbf{e}_3^{(4)})|=4=\lfloor(9n-20)/4\rfloor$,
$|T(\mathbf{e}_3^{(5)})|=6=\lfloor(9n-20)/4\rfloor$,
$|T(\mathbf{e}_3^{(6)})|=8=\lfloor(9n-20)/4\rfloor$ and
$|T(\mathbf{e}_3^{(7)})|=9=(9n-27)/4$. Assume $n\ge 8$ and that the result is true
for values lower than $n$. Observe that
$$
\begin{array}{lll}
\mathbf{e}_3^{(n)}=0001\cdot\overline{0}[n-4],
& \partial\mathbf{e}_3^{(n)}=0011\cdot\overline{0}[n-5],
& \partial^2\mathbf{e}_3^{(n)}=0101\cdot\overline{0}[n-6], \\
\partial^3\mathbf{e}_3^{(n)}=1111\cdot\overline{0}[n-7],
& \partial^4\mathbf{e}_3^{(n)}=0001\cdot\overline{0}[n-8]=\mathbf{e}_3^{(n-4)}.
\end{array}
$$
Hence,
\begin{align*}
|T(\mathbf{e}_3^{(n)})|
=& |\mathbf{e}_3^{(n)}|+| \partial\mathbf{e}_3^{(n)}|+|\partial^2\mathbf{e}_3^{(n)}|
+|\partial^3\mathbf{e}_3^{(n)}|+|T(\mathbf{e}_3^{n-4})|\\
=& 1+2+2+4+|T(\mathbf{e}_3^{n-4})|\\
=& 9+|T(\mathbf{e}_3^{n-4})|.
\end{align*}
If $n\equiv 3\pmod{4}$, then
$|T(\mathbf{e}_3^{(n)})|=9+(9(n-4)-27)/4=(9n-27)/4$;
otherwise,
$|T(\mathbf{e}_3^{(n)})|=9+\lfloor(9(n-4)-20)/4\rfloor=\lfloor(9n-20)/4\rfloor$. \qed
\end{proof}

\begin{remark}\normalfont
  Since $\mathbf{e}_{n-1-k}=i(\mathbf{e}_k)$, we have
  $|T(\mathbf{e}_{n-1-k})|=|T(\mathbf{e}_{k})|$. Hence, to calculate
  the weight of the triangles $T(\mathbf{e}_{k})$ it is sufficient
  to consider the values $k\le (n-1)/2$. Thus, in the rest of the section we can assume $n\ge 2k+1$.
\end{remark}

Table~\ref{pesos T(ei) n petit} shows the values of
$|T(\mathbf{e}_k^{(n)})|$ for  $9\le n\le 15$ and $4\le k\le
(n-1)/2$. We see
that $|T(\mathbf{e}_k^{(n)})|\ge 2n-3$ and that, for $n=12$ and $n=14$
the inequality is strict. Proposition~\ref{pesos T(ei)} generalizes
these observations.

\begin{d2}[htb]
$$
\begin{array}{|r|l|l|ll|ll|}
\hline
                   n   & 9  & 10 & 11 &    & 12 &    \\
\hline
                   k   & 4  & 4  & 4  & 5  & 4  & 5 \\
\hline
|T(\mathbf{e}_k^{(n)})| & 17 & 19 & 21 & 21 & 22 & 23  \\
\hline
\end{array}
$$
$$
\begin{array}{|r|lll|lll|llll|}
\hline
n                       & 13 &    &    & 14 &    &    & 15 &    &    &     \\
\hline
k                       & 4  & 5  & 6  & 4  &  5 &  6 &  4 & 5  & 6  & 7  \\
\hline
|T(\mathbf{e}_k^{(n)})| & 27 & 24 & 25 & 30 & 30 & 26 & 33 & 33 & 33 & 27 \\
\hline
\end{array}
$$
\caption{Weights of $T(\mathbf{e}_k^{(n)})$ for $9\le n\le 15$ and $4\le k\le(n-1)/2$. }
\label{pesos T(ei) n petit}
\end{d2}

\begin{prop}
\label{pesos T(ei)}
Let $k\ge 4$ and $n\ge 9$ be integers  with $k\le (n-1)/2$. Then,
$|T(\mathbf{e}_k^{(n)})|\ge 2n-3$. Moreover, if $n$ is even, the inequality is strict:
$|T(\mathbf{e}_k^{(n)})|>2n-3$.
\end{prop}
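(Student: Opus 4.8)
The plan is to shrink the triangle by repeatedly stripping off its rightmost diagonal, which converts the statement into a short arithmetic lemma about parities of binomial coefficients. Write $f(k,n)=|T(\mathbf{e}_k^{(n)})|$ and $K'=n-1-k$; by the remark after Proposition~\ref{pesos T(e0),T(e1),T(e2)} we have $f(k,n)=f(n-1-k,n)$, so it suffices to treat the symmetric range $4\le k\le n-5$, which for $n\ge 9$ contains the range $4\le k\le(n-1)/2$ of the statement. Deleting the last entry of $\mathbf{e}_k^{(n)}$ gives $\mathbf{e}_k^{(n-1)}$ (legitimate since $k\le n-5$), and since differentiation commutes with truncation, $T(\mathbf{e}_k^{(n-1)})$ is exactly $T(\mathbf{e}_k^{(n)})$ with its rightmost diagonal removed. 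Hence $f(k,n)=f(k,n-1)+|r(\mathbf{e}_k^{(n)})|$, and iterating four times
$$
f(k,n)=f(k,n-4)+D_4,\qquad D_4=\sum_{s=0}^{3}\bigl|r(\mathbf{e}_k^{(n-s)})\bigr|.
$$
By~\eqref{right} one has $(r(\mathbf{e}_k^{(m)}))_j=\binom{j}{m-1-k}$, so $|r(\mathbf{e}_k^{(m)})|$ counts the $t\in\{0,\dots,k\}$ for which $\binom{(m-1-k)+t}{t}$ is odd. Writing $a(x)$ for the number of $t\in\{0,\dots,k\}$ with $\binom{x+t}{t}$ odd, this gives $D_4=a(K')+a(K'-1)+a(K'-2)+a(K'-3)$.

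The heart of the argument is the uniform bound, which I would isolate as a lemma: if $k\ge 4$ and $x\ge 4$ then
$$
a(x-3)+a(x-2)+a(x-1)+a(x)\ \ge\ 9.
$$
By Kummer's theorem $\binom{x+t}{t}$ is odd exactly when the base-$2$ addition of $x$ and $t$ produces no carry, i.e.\ when $x$ and $t$ have no common binary digit. Among the four consecutive integers $x-3,\dots,x$ the residues modulo $4$ are pairwise distinct, hence equal to $0,1,2,3$ in some order. For the one $\equiv 0\pmod 4$ both lowest bits vanish, so every $t\in\{0,1,2,3\}$ is carry-free (this uses $k\ge 3$), giving $a\ge 4$; for the one $\equiv 1$ the values $t\in\{0,2\}$ work and for the one $\equiv 2$ the values $t\in\{0,1\}$ work, giving $a\ge 2$ each; and $t=0$ always works, giving $a\ge 1$ for the one $\equiv 3$. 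Summing yields $4+2+2+1=9$. I expect this mod-$4$ case analysis to be the only genuine obstacle: the constant $9$ is sharp (it is attained, e.g., at $k=6$, $n=14$), and the weaker value $8$ would fail both in the even case and in the sharp boundary instances below.

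It then remains to run an induction on $n$, proving $f(k,n)\ge 2n-3$, and $f(k,n)\ge 2n-2$ when $n$ is even, for all $4\le k\le n-5$. For the base cases $9\le n\le 12$ the required inequalities can be read off Table~\ref{pesos T(ei) n petit} for every admissible $k$ (the even cases use $f(4,10)=19>17$, $f(4,12)=22>21$ and $f(5,12)=23>21$). For $n\ge 13$ the recurrence and the lemma give $f(k,n)\ge f(k,n-4)+9$, with $n$ and $n-4$ of equal parity. If $k\le(n-5)/2$ the inductive hypothesis applies to $f(k,n-4)$; if $k>(n-5)/2$ then $f(k,n-4)=f(k',n-4)$ with $k'=n-5-k<(n-5)/2$, and either $k'\ge 4$ (inductive hypothesis again) or $k'\le 3$. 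In the clean cases $f(k,n-4)\ge 2(n-4)-3$, so $f(k,n)\ge 2n-2$, which establishes both asserted inequalities at $n$. The cases $k'\le 3$ force $k\ge n-8$, hence $n\le 15$, so they reduce to finitely many checks: combining the exact weights of Proposition~\ref{pesos T(e0),T(e1),T(e2)} with the computed values of $D_4$ (for instance $f(7,15)=f(3,11)+9=18+9=27$ and $f(6,14)=f(3,10)+9=17+9=26$) verifies $f(k,n)\ge 2n-3$, strict for even $n$, in each. The difficulty is thus concentrated entirely in the lemma; the induction is routine, provided the sharpness of the constant $9$ is respected in these finitely many boundary instances.
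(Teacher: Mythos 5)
Your proposal is correct in substance but takes a genuinely different route from the paper, and it needs one repair in the bookkeeping of the induction. The paper peels off one right diagonal at a time and does a case analysis on $|r(\mathbf{e}_k^{(n)})|\in\{1,2,3,{\ge}4\}$, handling the small weights by ad hoc arguments (a contradiction via $\ell$ when the weight is $2$, an explicit computation of four derivatives when it is $1$) and needing base cases up to $n=15$. You replace all of this by a single uniform lemma: writing $(r(\mathbf{e}_k^{(m)}))_j=\binom{j}{m-1-k}$ and invoking Kummer/Lucas, the weights of four consecutive right diagonals always sum to at least $9$. This is cleaner and more systematic -- it explains where the paper's $1+2+2+4=9$ in its weight-one case really comes from, it shrinks the base cases to $9\le n\le 12$, and your sharpness examples ($k=6$, $n=14$ and $k=7$, $n=15$, both attaining $D_4=9$) check out against Table~\ref{pesos T(ei) n petit}. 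The lemma itself, the identity $f(k,n)=f(k,n-1)+|r(\mathbf{e}_k^{(n)})|$, and the mod-$4$ case analysis are all correct.

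The repair concerns the range over which you run the induction. You announce the inductive claim for all $4\le k\le n-5$, but with that range the step does not close: for $k\in\{n-6,n-5\}$ you land on $f(k',n-4)$ with $k'\in\{0,1\}$, and $f(0,n-4)+9=n+5$ and $f(1,n-4)+9\approx 3n/2+2$ are far below $2n-3$; moreover the inference ``$k\ge n-8$, hence $n\le 15$'' is simply false when $k$ is allowed up to $n-5$, so these are not finitely many cases. The fix is to state the induction only for $4\le k\le (n-1)/2$ (which suffices, by the symmetry $f(k,n)=f(n-1-k,n)$) and to use that symmetry \emph{inside} the step exactly as you do, reducing $f(k,n-4)$ to $f(k',n-4)$ with $k'=n-5-k$. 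Then $k'\le 3$ together with $k\le(n-1)/2$ really does force $n\le 15$, and the residual checks are exactly $(n,k)\in\{(13,5),(13,6),(14,6),(15,7)\}$, all of which pass (your two tightest instances plus $f(5,13)\ge f(3,9)+9=24$ and $f(6,13)\ge f(2,9)+9=24$). With that single correction the argument is complete and, in my view, preferable to the paper's.
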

\begin{proof}
Table~\ref{pesos T(ei) n petit} shows that the result is true for $9\le n\le 15$.
Let $n\ge 16$ and assume the result true for values lower than $n$ (and greater or equal than 9).
We use the facts that $r(\mathbf{e}_k^{(n-1)})=\partial r(\mathbf{e}_k^{(n)})$ and
$|T(\mathbf{e}_k^{(n)})|=|T(\mathbf{e}_k^{(n-1)})|+|r(\mathbf{e}_k^{(n)})|$. The argument is casuistic on the weight of
$r(\mathbf{e}_k^{(n)})$.

If $|r(\mathbf{e}_k^{(n)})|\ge 4$, by induction hypothesis we have,
$$
|T(\mathbf{e}_k^{(n)})|=|T(\mathbf{e}_k^{(n-1)})|+|r(\mathbf{e}_k^{(n)})|\ge 2(n-1)-3+4=2n-1>2n-3.
$$
\indent If $|r(\mathbf{e}_k^{(n)})|=3$, then $|r(\mathbf{e}_k^{(n-1)})|=|\partial r(\mathbf{e}_k^{(n)})|\ge 1$ and
$$
|T(\mathbf{e}_k^{(n)})|=|T(\mathbf{e}_k^{(n-2)})|+|r(\mathbf{e}_k^{(n-1)})|+|r(\mathbf{e}_k^{(n)})|
\ge 2(n-2)-3+1+3=2n-3.
$$
Moreover, if $n$ is even, then $n-2$ is also even and by induction hypothesis the inequality is  strict.

Now consider the case when $|r(\mathbf{e}_k^{(n)})|=2$. First, suppose that the two ones are the last two coordinates.
If $r(\mathbf{e}_k^{(n)})=\mathbf{u}=\overline{0}[n-2]\cdot 11$, then $\mathbf{e}_k^{(n)}=\ell(\mathbf{u})$.
Now, depending on the parity of $n$, we have
$\ell(\mathbf{u})=01\cdot \overline{0}[n-2]$ or $\ell(\mathbf{u})=11\cdot \overline{0}[n-2]$, both distinct
from  $\mathbf{e}_k^{(n)}$, a contradiction. Hence, the two ones of $r(\mathbf{e}_k^{(n)})$ are not the last two coordinates. This implies $|r(\mathbf{e}_k^{(n-1)})|=|\partial r(\mathbf{e}_k^{(n)})|\ge 2$. Therefore,
$$
|T(\mathbf{e}_k^{(n)})|=|T(\mathbf{e}_k^{(n-2)})|+|r(\mathbf{e}_k^{(n-1)})|+|r(\mathbf{e}_k^{(n)})|
> 2(n-2)-3+2+2=2n-3.
$$
As before, if $n$ is even, $n-2$ is also even and the inequality is strict.

Finally, consider the case $|r(\mathbf{e}_k^{(n)})|=1$. The coordinate $n-k-1$ of $r(\mathbf{e}_k^{(n)})$ is $1$,
so $r(\mathbf{e}_k^{(n)})=\mathbf{e}_{n-k-1}=\overline{0}[n-k-1]\cdot 1\cdot\overline{0}[k]$. The first three derivatives are
\begin{align*}
r(\mathbf{e}_k^{(n-1)})=\partial r(\mathbf{e}_k^{(n)})
&=\overline{0}[n-k-2]\cdot 11\cdot\overline{0}[k-1],\\
r(\mathbf{e}_k^{(n-2)})=\partial r(\mathbf{e}_k^{(n-1)})
&=\overline{0}[n-k-3]\cdot 101\cdot\overline{0}[k-2],\\
r(\mathbf{e}_k^{(n-3)})=\partial r(\mathbf{e}_k^{(n-2)})
&=\overline{0}[n-k-4]\cdot 1111\cdot\overline{0}[k-3].
\end{align*}
Therefore,
\begin{align*}
|T(\mathbf{e}_k^{(n)})|
&=|T(\mathbf{e}_k^{(n-4)})|
+|r(\mathbf{e}_k^{(n-3)})|+|r(\mathbf{e}_k^{(n-2)})|+|r(\mathbf{e}_k^{(n-1)})|+|r(\mathbf{e}_k^{(n)})|\\
&\ge 2(n-4)-3+4+2+2+1\\
&=2n-2>2n-3.\qed
\end{align*}
\end{proof}

\section{$1$-sequences}

In this section we assume $n\ge 4$. The unique sequence of
$W_0^{(n-1)}$, which is $\mathbf{0^{(n-1)}}=\overline{0}[n-1]$, has
the two primitives $\mathbf{0}\in W_0$ and
$\mathbf{a}_1=\mathbf{1}$. We shall see that $W_1$ is exactly the
equivalence class of $\mathbf{a}_1$. As
$\mathbf{a}_1=i(\mathbf{a}_1)$, the class of $\mathbf{a}_1$ contains
the three sequences
$$
\mathbf{a}_1=\mathbf{1}=\overline{1}[n], \quad
\mathbf{a}_2=r(\mathbf{a}_1)=1\cdot\overline{0}[n-1],\quad
\mathbf{a}_3=\ell(\mathbf{a}_1)=\overline{0}[n-1]\cdot 1.
$$

\begin{remark}
\normalfont
Note that $\mathbf{a}_2=\mathbf{e}_0$ i $\mathbf{a}_3=\mathbf{e}_{n-1}$ (see Figure~\ref{fai}).
\end{remark}

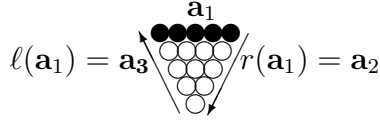
\begin{figure}[htb]
\centering
\setlength{\unitlength}{2.35mm}
\begin{picture}(18,6)
\y{8.5}{4.5}  \y{9.5}{4.5} \y{10.5}{4.5} \y{11.5}{4.5} \y{12.5}{4.5}
\n{9.0}{3.5}  \n{10.0}{3.5} \n{11.0}{3.5} \n{12.0}{3.5}
\n{9.5}{2.5}  \n{10.5}{2.5} \n{11.5}{2.5}
\n{10.0}{1.5} \n{11.0}{1.5}
\n{10.5}{0.5}
\put(13.5,4.5){\vector(-1,-2){2.3}} 
\put(9.6,0){\vector(-1,2){2.3}} 
\put(10,5.5){$\mathbf{a}_1$}
\put(13,2.5){$r(\mathbf{a}_1)=\mathbf{a}_2$}
\put(0,2.5){$\ell(\mathbf{a}_1)=\mathbf{a_3}$}
\end{picture}
\caption{The sequences $\mathbf{a}_1$, $\mathbf{a}_2=r(\mathbf{a}_1)$ i $\mathbf{a}_3=\ell(\mathbf{a}_1)$ for $n=5$.}
\label{fai}
\end{figure}

\begin{remark}
\label{pesos T(ai)}
\normalfont
It is clear that $|\mathbf{a}_1|=n$ and $|\mathbf{a}_2|=|\mathbf{a}_3|=1$. Moreover,
$|T(\mathbf{a}_1)|=|T(\mathbf{a}_2)|=|T(\mathbf{a}_3)|=n$.
\end{remark}

We define $A=A^{(n)}=\{\mathbf{a}_1,\,\mathbf{a}_2,\,\mathbf{a}_3\}$.
H.~Harborth~\cite{Harborth} observed that the minimum weight of non-zero Steinhaus triangles of size $n$ is $n$
and that $|T(\mathbf{a}_1)|=|T(\mathbf{a}_2)|=|T(\mathbf{a}_3)|=n$; that is,  $w_1=n$ and $A\subseteq W_1$.
For the sake of completeness, in this section we include the  proof that $w_1=n$; we also prove that $W_1=A$.

\begin{prop}
\label{auxW1}
For $n\ge 4$, if $\mathbf{x}\in\mathbb{F}_2^n\setminus(W_0\cup A)$, then $|T(\mathbf{x})|>n$.
\end{prop}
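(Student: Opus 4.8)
The plan is to prove the statement by induction on $n$, peeling off the top row of the triangle. The base case $n=4$ is contained in Proposition~\ref{casos n<=4}(iv): there $W_1^{(4)}=\{1111,1000,0001\}=A^{(4)}$, so every $\mathbf{x}\notin W_0\cup A$ already satisfies $|T(\mathbf{x})|\ge 5>4$. For the inductive step I fix $n\ge 5$ and assume the proposition holds at length $n-1$. The engine of the argument is the elementary identity
\begin{equation*}
|T(\mathbf{x})|=|\mathbf{x}|+|T(\partial\mathbf{x})|,
\end{equation*}
which holds because the rows of $T(\partial\mathbf{x})$ are exactly $\partial\mathbf{x},\partial^2\mathbf{x},\ldots,\partial^{n-1}\mathbf{x}$, i.e.\ all the rows of $T(\mathbf{x})$ except the top one. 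I would then split into three cases according to where $\partial\mathbf{x}$ sits relative to $W_0^{(n-1)}\cup A^{(n-1)}$.

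The two outer cases are quick. If $\partial\mathbf{x}=\mathbf{0}^{(n-1)}$, then $\mathbf{x}\in\ker\partial=\{\mathbf{0},\mathbf{1}\}\subseteq W_0\cup A$, contradicting the hypothesis, so this case is vacuous. If $\partial\mathbf{x}\notin W_0^{(n-1)}\cup A^{(n-1)}$, the induction hypothesis gives $|T(\partial\mathbf{x})|>n-1$, hence $|T(\partial\mathbf{x})|\ge n$ since weights are integers; because $\mathbf{x}\ne\mathbf{0}$ forces $|\mathbf{x}|\ge 1$, the identity yields $|T(\mathbf{x})|\ge n+1>n$.

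The remaining case $\partial\mathbf{x}\in A^{(n-1)}$ is where the real content lies, and is the step I expect to need the most care. Here the weight of every triangle generated by a sequence in $A^{(n-1)}$ is $n-1$ (Remark~\ref{pesos T(ai)} applied at length $n-1$), so $|T(\mathbf{x})|=|\mathbf{x}|+(n-1)$ and it suffices to show $|\mathbf{x}|\ge 2$. Since $\mathbf{x}\ne\mathbf{0}$ we have $|\mathbf{x}|\ge 1$, and I would rule out $|\mathbf{x}|=1$ by analysing which weight-one sequences can have a derivative lying in $A^{(n-1)}$. Writing $\mathbf{x}=\mathbf{e}_k$, a direct computation of $\partial\mathbf{e}_k$ shows that $\partial\mathbf{e}_0=\mathbf{e}_0^{(n-1)}$ and $\partial\mathbf{e}_{n-1}=\mathbf{e}_{n-2}^{(n-1)}$ indeed lie in $A^{(n-1)}$, but these correspond precisely to $\mathbf{x}=\mathbf{a}_2$ and $\mathbf{x}=\mathbf{a}_3$, both excluded; whereas for $1\le k\le n-2$ the derivative $\partial\mathbf{e}_k$ has weight $2$ (a pair of consecutive ones), which cannot belong to $A^{(n-1)}$ since the elements of $A^{(n-1)}$ have weight $1$ or $n-1\ge 4$. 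Thus $|\mathbf{x}|=1$ forces $\mathbf{x}\in A$, a contradiction, so $|\mathbf{x}|\ge 2$ and $|T(\mathbf{x})|\ge n+1>n$, completing the induction.

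The only delicate point is this last case, and in particular the bookkeeping of which weight-one sequences reduce into $A^{(n-1)}$ under $\partial$. Equivalently, one can phrase the case through primitives: each element of $A^{(n-1)}$ has exactly two primitives, and I would verify that in every instance one of them already lies in $A$ (namely $\mathbf{a}_2$ or $\mathbf{a}_3$) while the other has weight at least $2$. Either formulation reduces the case to a short, explicit inspection, and no case outside this one requires more than the identity above together with the induction hypothesis.
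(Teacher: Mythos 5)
Your proposal is correct and follows essentially the same route as the paper: induction on $n$ via the identity $|T(\mathbf{x})|=|\mathbf{x}|+|T(\partial\mathbf{x})|$, with the same three cases and the key case $\partial\mathbf{x}\in A^{(n-1)}$ settled by showing $|\mathbf{x}|\ge 2$. The paper does this last step by listing the two primitives of each $\mathbf{a}_i^{(n-1)}$ and checking their weights, which is exactly the equivalent formulation you mention at the end, so there is nothing substantive to add.
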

\begin{proof}
By induction on $n$. For $n=4$, according to Proposition~\ref{casos n<=4}, we have $W_1=A$ and the result holds.
Let $n\ge 5$ and assume that the result is true for $n-1$.

Let $\mathbf{x}\in\mathbb{F}_2^n\setminus(W_0\cup A)$.
If $\partial\mathbf{x}\not\in W_0^{(n-1)}\cup A^{(n-1)}$, then we can apply the induction hypothesis and we have
$|T(\partial\mathbf{x})|>n-1$. Since $\mathbf{x}\ne\mathbf{0}$, we have $|\mathbf{x}|\ge 1$. Then,
$$
|T(\mathbf{x})|=|\mathbf{x}|+|T(\partial\mathbf{x})|> 1+(n-1)=n,
$$
as we wanted to prove. Now assume that $\partial\mathbf{x}\in
W_0^{(n-1)}\cup A^{(n-1)}$.  If $\partial\mathbf{x}\in
W_0^{(n-1)}=\mathbf{0}^{(n-1)}$, then
$\mathbf{x}\in\{\mathbf{0},\mathbf{1}\}\subset W_0\cup A$, a
contradiction. Hence, $\partial\mathbf{x}\in A^{n-1}$, that is,
$\partial\mathbf{x}=\mathbf{a}_i^{(n-1)}$ for some $i\in[3]$. Since
$|T(\partial\mathbf{x})|=|T(\mathbf{a}_i^{(n-1)})|=n-1$, we have
$|T(\mathbf{x})|=|\mathbf{x}|+|T(\partial\mathbf{x})|=|\mathbf{x}|+n-1$. If
we check that $|\mathbf{x}|\ge 2$, then $|T(\mathbf{x})|>n$ as wanted.
Table~\ref{d1x=ai} gives, for each
$\partial\mathbf{x}=\mathbf{a}_i^{(n-1)}$, the two primitives
$\mathbf{x}$ of $\mathbf{a}_i^{(n-1)}$. In two cases we have
$\mathbf{x}\in A$, against the hypothesis. In the other four cases,
bearing in mind that $n\ge 5$, it is immediately obtained that
$|\mathbf{x}|\ge 2$.\qed

\begin{d2}[htb]
$$
\begin{array}{|l|l|}
\hline
\partial\mathbf{x} & \mathbf{x} \\
\hline
\mathbf{a}_1^{(n-1)}=\overline{1}[n-1] & \overline{01}[n] \\
                                       & \overline{10}[n] \\
\hline
\mathbf{a}_2^{(n-1)}=1\cdot\overline{0}[n-1] & 0\cdot\overline{1}[n-1] \\
                                             & 1\cdot\overline{0}[n-1]=\mathbf{a}_2 \\
\hline
\mathbf{a}_3^{(n-1)}=\overline{0}[n-1]\cdot 1 & \overline{0}[n-1]\cdot 1=\mathbf{a}_3 \\
                                              & \overline{1}[n-1]\cdot 0 \\
\hline
\end{array}
$$
\caption{Sequences $\mathbf{x}$ with $\partial\mathbf{x}\in A^{(n-1)}$ ($n\ge 5$).}
\label{d1x=ai}
\end{d2}

\end{proof}

As a consequence of Proposition~\ref{auxW1}, we have the following theorem:

\begin{thm}
\label{thmW1} For $n\ge 4$, we have $w_1=n$ i $W_1=\{\mathbf{a}_1,\,\mathbf{a}_2,\,\mathbf{a}_3\}$.
\end{thm}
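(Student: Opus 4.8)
The plan is to read Theorem~\ref{thmW1} off directly from Proposition~\ref{auxW1}, since all the substantive work has already been done there. First I would recall from Remark~\ref{pesos T(ai)} that the three sequences $\mathbf{a}_1$, $\mathbf{a}_2$, $\mathbf{a}_3$ comprising $A$ each generate a triangle of weight exactly $n$, so $n$ is genuinely an attained weight and every element of $A$ is a sequence of triangle-weight $n$. I would also note that for $n\ge 4$ these three sequences are pairwise distinct: $\mathbf{a}_1=\overline{1}[n]$ is all ones, whereas $\mathbf{a}_2=1\cdot\overline{0}[n-1]$ and $\mathbf{a}_3=\overline{0}[n-1]\cdot 1$ each have weight $1$ with the single one in the first and last position respectively. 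Thus $A$ contributes three genuinely different minimal triangles.

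Next I would combine this with Proposition~\ref{auxW1}, which asserts that every $\mathbf{x}\in\mathbb{F}_2^n\setminus(W_0\cup A)$ satisfies $|T(\mathbf{x})|>n$. Since the only element of $W_0$ is $\mathbf{0}$, whose triangle has weight $0=w_0$, the smallest \emph{positive} value taken by $|T(\mathbf{x})|$ over all $\mathbf{x}\in\mathbb{F}_2^n$ is exactly $n$: it is attained on $A$ and strictly exceeded everywhere outside $W_0\cup A$. This pins down $w_1=n$.

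To identify $W_1$ I would use the same trichotomy. A sequence $\mathbf{x}$ has triangle-weight $w_1=n$ precisely when it is neither $\mathbf{0}$ (weight $0$) nor a sequence outside $W_0\cup A$ (weight $>n$ by Proposition~\ref{auxW1}); the only remaining possibilities are the elements of $A$, each of which does achieve weight $n$ by Remark~\ref{pesos T(ai)}. Hence $W_1=A=\{\mathbf{a}_1,\mathbf{a}_2,\mathbf{a}_3\}$, as claimed.

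The honest assessment is that this theorem carries essentially no obstacle of its own: all the force lies in Proposition~\ref{auxW1}, whose proof is the real argument—an induction on $n$ splitting according to whether the derivative $\partial\mathbf{x}$ falls into $W_0^{(n-1)}\cup A^{(n-1)}$, using the weight values $|T(\mathbf{a}_i^{(n-1)})|=n-1$ together with the primitive count in Table~\ref{d1x=ai} to force $|\mathbf{x}|\ge 2$ in the remaining cases. Granted that proposition, the theorem is a bookkeeping corollary, and the only point requiring a moment's care is confirming that the three candidate minimizers are distinct and genuinely attain the bound.
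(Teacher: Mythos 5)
Your proposal is correct and follows exactly the same route as the paper: cite Remark~\ref{pesos T(ai)} for $|T(\mathbf{a}_i)|=n$ and Proposition~\ref{auxW1} for the strict lower bound off $W_0\cup A$, then conclude $w_1=n$ and $W_1=A$. The extra observation that the three sequences of $A$ are pairwise distinct is a harmless (and correct) addition the paper leaves implicit.
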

\begin{proof}
The triangles generated by $\mathbf{a}_1$, $\mathbf{a}_2$ and  $\mathbf{a}_3$ have weight $n$
and, by Proposition~\ref{auxW1}, all the triangles  generated by sequences of $\mathbb{F}_2^n\setminus(W_0\cup A)$
have weight $>n$. Hence, $W_1=A$ i $w_1=n$. \qed
\end{proof}

\section{$2$-sequences}

For $n=1$ and $n=2$, we have $m(n)=1$, so to study $2$-sequences it must be $n\ge 3$. The case $n=3$
has been studied in Proposition~\ref{casos n<=4}, and it turns out that $w_2^{(3)}=4$ and
$W_2^{(3)}=\{110,\, 011,\, 101\}$. In this section we assume $n\ge 4$.

To find $w_2$ and $W_2$ we use an inductive argument on the size
$n$ of the triangles, but the cases $n=6$ and $n=7$ do not follow the general rule.
Thus, the inductive argument needs to begin with the basic case  $n=8$.
Then, for $4\le n\le 8$, we use a computer-aided exhaustive search which gives
the results displayed in Table~\ref{W2 casos 4<=n<=8}. For $n\in\{4,5,8\}$, the set
$W_2$ is an equivalence class. The set $W_2^{(6)}$ is the reunion of the two equivalence classes
$\{010000,\,110000,\,000010,\,101010,\,010101,\,000011\}$ and $\{001000,\,000100,001100\}$, 
and $W_2^{(7)}$ is the reunion of the two equivalence classes $\{0100000,\,0000010,\,0101010\}$ and $\{0001000\}$.
Note that in Table~\ref{W2 casos 4<=n<=8} the
value $w_2$ depends on $n$ according to the formula 
$w_2=\lfloor 3n/2\rfloor-1=\lfloor (3n-2)/2\rfloor$ (for $n=3$, the formula gives $3$ instead of the correct one $w_2=4$).
 
\begin{d2}[htb]
$$
\begin{array}{l|l|l}
n & w_2 & W_2 \\
\hline
4 & 5 &\{0100,\,0011,\,1010,\,0010,\,0101,\,1100\}\\
5 & 6 & \{01000,\,00010, \,01010\}\\
6 & 8 & \{010000, \,110000, \,001000, \,000100, \,001100, \,000010, \,101010, \,010101, \,000011\}\\
7 & 9 & \{0100000, \,0001000, \,0000010, \,0101010\}\\
8 & 11 & \{01000000, \,11000000, \,00000010, \,10101010, \,01010101, \,00000011\}\\
\end{array}
$$
\vspace{-0.4truecm}
\caption{The sets $W_2$ and the values $w_2$ for $4\le n\le 8$.}
\label{W2 casos 4<=n<=8}
\end{d2}

\begin{d2}
$$
\begin{array}{l|l|l}
n & \lfloor 3n/2\rfloor & \mbox{sequences $\mathbf{x}$ with $|T(\mathbf{x})|=\lfloor 3n/2\rfloor $} \\
\hline
4  & 6  & \{1001,\,0110,\,1110,\,0111\}                          \\
5  & 7  & \{11000, \,00100, \,01100, \,00110, \,10101, \,00011\} \\
6  & 9  & ---                                                    \\
7  & 10 &  \{1010101,\,1100000, \,0000011\}                      \\
8  & 12 & ---                                                    \\
\end{array}
$$
\caption{Sequences $\mathbf{x}$ with $|T(\mathbf{x})|=\lfloor 3n/2\rfloor $, for $4\le n\le 8$.}
\label{W3 casos 4<=n<=8}
\end{d2}

Table~\ref{W3 casos 4<=n<=8} gives the sets of sequences which
generate triangles of weight $\lfloor 3n/2\rfloor$.  We see that for
$n\in\{4,5,7\}$, we have $w_3=\lfloor 3n/2\rfloor=w_2+1$ and, for
$n\in\{6,8\}$, $w_3>\lfloor 3n/2\rfloor$. This will be useful later.

The primitives of $\mathbf{a}_1^{(n-1)}$ are $\mathbf{b}_1=\overline{10}[n]$ and $\mathbf{b}_4=\overline{01}[n]$.
We shall see that $W_2$ consists exactly of $\mathbf{b}_1$, $\mathbf{b}_4$ and its equivalent. Define
$$
\begin{array}{lll}
\mathbf{b}_1=\overline{10}[n],
& \mathbf{b}_2=01\cdot\overline{0}[n-2],
& \mathbf{b}_3=\overline{0}[n-2]\cdot 11, \\
\mathbf{b}_4=\overline{01}[n],
&\mathbf{b}_5=11\cdot \overline{0}[n-2],
& \mathbf{b}_6=\overline{0}[n-2]\cdot 10.
\end{array}
$$

\begin{remark}
\normalfont
Note that, if $n$ is even, then
$\mathbf{b}_2=r(\mathbf{b}_1)$,  $\mathbf{b}_3=\ell(\mathbf{b}_1)$,  $\mathbf{b}_4=i(\mathbf{b}_1)$,
$\mathbf{b}_5=r(\mathbf{b}_4)$ and $\mathbf{b}_6=\ell(\mathbf{b}_4)$
(see top of Figure~\ref{fbi}). If $n$ is odd, then
 $\mathbf{b}_5=r(\mathbf{b}_1)$,  $\mathbf{b}_3=\ell(\mathbf{b}_1)$, $\mathbf{b}_2=r(\mathbf{b}_4)$
and  $\mathbf{b}_6=\ell(\mathbf{b}_4)$ (see bottom of Figure~\ref{fbi}).
\end{remark}

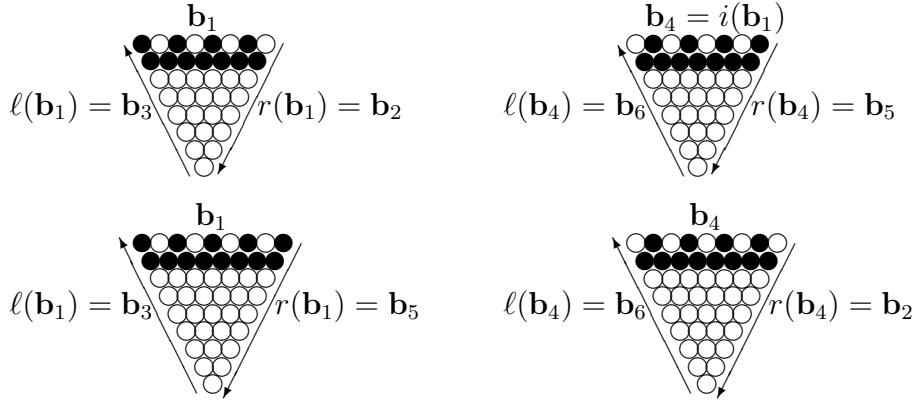
\begin{figure}[htb]
\centering
\setlength{\unitlength}{2.35mm}
\begin{picture}(14,9)
\y{0.5}{7.5} \n{1.5}{7.5} \y{2.5}{7.5} \n{3.5}{7.5} \y{4.5}{7.5} \n{5.5}{7.5} \y{6.5}{7.5} \n{7.5}{7.5}
\y{1.0}{6.5} \y{2.0}{6.5} \y{3.0}{6.5} \y{4.0}{6.5} \y{5.0}{6.5} \y{6.0}{6.5} \y{7.0}{6.5}
\n{1.5}{5.5} \n{2.5}{5.5} \n{3.5}{5.5} \n{4.5}{5.5} \n{5.5}{5.5} \n{6.5}{5.5}
\n{2.0}{4.5} \n{3.0}{4.5} \n{4.0}{4.5} \n{5.0}{4.5} \n{6.0}{4.5}
\n{2.5}{3.5} \n{3.5}{3.5} \n{4.5}{3.5} \n{5.5}{3.5}
\n{3.0}{2.5} \n{4.0}{2.5} \n{5.0}{2.5}
\n{3.5}{1.5} \n{4.5}{1.5}
\n{4.0}{0.5}
\put(8.5,7.5){\vector(-1,-2){3.7}} 
\put(3.2,0){\vector(-1,2){3.7}} 
\put(3,8.5){$\mathbf{b}_1$}
\put(7,3.5){$r(\mathbf{b}_1)=\mathbf{b}_2$}
\put(-7,3.5){$\ell(\mathbf{b}_1)=\mathbf{b}_3$}
\end{picture}
\hspace{3truecm}
\begin{picture}(14,10)
\n{0.5}{7.5} \y{1.5}{7.5} \n{2.5}{7.5} \y{3.5}{7.5} \n{4.5}{7.5} \y{5.5}{7.5} \n{6.5}{7.5} \y{7.5}{7.5}
\y{1.0}{6.5} \y{2.0}{6.5} \y{3.0}{6.5} \y{4.0}{6.5} \y{5.0}{6.5} \y{6.0}{6.5} \y{7.0}{6.5}
\n{1.5}{5.5} \n{2.5}{5.5} \n{3.5}{5.5} \n{4.5}{5.5} \n{5.5}{5.5} \n{6.5}{5.5}
\n{2.0}{4.5} \n{3.0}{4.5} \n{4.0}{4.5} \n{5.0}{4.5} \n{6.0}{4.5}
\n{2.5}{3.5} \n{3.5}{3.5} \n{4.5}{3.5} \n{5.5}{3.5}
\n{3.0}{2.5} \n{4.0}{2.5} \n{5.0}{2.5}
\n{3.5}{1.5} \n{4.5}{1.5}
\n{4.0}{0.5}
\put(8.5,7.5){\vector(-1,-2){3.7}} 
\put(3.2,0){\vector(-1,2){3.7}} 
\put(1,8.5){$\mathbf{b}_4=i(\mathbf{b}_1)$}
\put(7,3.5){$r(\mathbf{b}_4)=\mathbf{b}_5$}
\put(-7,3.5){$\ell(\mathbf{b}_4)=\mathbf{b}_6$}
\end{picture}
\\[5mm]
\begin{picture}(14,9)
\y{0.5}{7.5} \n{1.5}{7.5} \y{2.5}{7.5} \n{3.5}{7.5} \y{4.5}{7.5} \n{5.5}{7.5} \y{6.5}{7.5} \n{7.5}{7.5} \y{8.5}{7.5}
\y{1.0}{6.5} \y{2.0}{6.5} \y{3.0}{6.5} \y{4.0}{6.5} \y{5.0}{6.5} \y{6.0}{6.5} \y{7.0}{6.5} \y{8.0}{6.5}
\n{1.5}{5.5} \n{2.5}{5.5} \n{3.5}{5.5} \n{4.5}{5.5} \n{5.5}{5.5} \n{6.5}{5.5} \n{7.5}{5.5}
\n{2.0}{4.5} \n{3.0}{4.5} \n{4.0}{4.5} \n{5.0}{4.5} \n{6.0}{4.5} \n{7.0}{4.5}
\n{2.5}{3.5} \n{3.5}{3.5} \n{4.5}{3.5} \n{5.5}{3.5} \n{6.5}{3.5}
\n{3.0}{2.5} \n{4.0}{2.5} \n{5.0}{2.5} \n{6.0}{2.5}
\n{3.5}{1.5} \n{4.5}{1.5} \n{5.5}{1.5}
\n{4.0}{0.5} \n{5.0}{0.5}
\n{4.5}{-0.5}
\put(9.5,7.5){\vector(-1,-2){4.4}} 
\put(3.6,-1){\vector(-1,2){4.4}} 
\put(3.5,8.5){$\mathbf{b}_1$}
\put(8,3.5){$r(\mathbf{b}_1)=\mathbf{b}_5$}
\put(-7,3.5){$\ell(\mathbf{b}_1)=\mathbf{b}_3$}
\end{picture}
\hspace{3truecm}
\begin{picture}(14,9)
\n{0.5}{7.5} \y{1.5}{7.5} \n{2.5}{7.5} \y{3.5}{7.5} \n{4.5}{7.5} \y{5.5}{7.5} \n{6.5}{7.5} \y{7.5}{7.5} \n{8.5}{7.5}
\y{1.0}{6.5} \y{2.0}{6.5} \y{3.0}{6.5} \y{4.0}{6.5} \y{5.0}{6.5} \y{6.0}{6.5} \y{7.0}{6.5} \y{8.0}{6.5}
\n{1.5}{5.5} \n{2.5}{5.5} \n{3.5}{5.5} \n{4.5}{5.5} \n{5.5}{5.5} \n{6.5}{5.5} \n{7.5}{5.5}
\n{2.0}{4.5} \n{3.0}{4.5} \n{4.0}{4.5} \n{5.0}{4.5} \n{6.0}{4.5} \n{7.0}{4.5}
\n{2.5}{3.5} \n{3.5}{3.5} \n{4.5}{3.5} \n{5.5}{3.5} \n{6.5}{3.5}
\n{3.0}{2.5} \n{4.0}{2.5} \n{5.0}{2.5} \n{6.0}{2.5}
\n{3.5}{1.5} \n{4.5}{1.5} \n{5.5}{1.5}
\n{4.0}{0.5} \n{5.0}{0.5}
\n{4.5}{-0.5}
\put(9.5,7.5){\vector(-1,-2){4.4}} 
\put(3.6,-1){\vector(-1,2){4.4}} 
\put(3.5,8.5){$\mathbf{b}_4$}
\put(8,3.5){$r(\mathbf{b}_4)=\mathbf{b}_2$}
\put(-7,3.5){$\ell(\mathbf{b}_4)=\mathbf{b}_6$}
\end{picture}
\caption{The sequences $\mathbf{b}_i$ for $n=8$ in the top (case $n$ even), and $n=9$ in the bottom (case $n$ odd).}
\label{fbi}
\end{figure}

\begin{remark}
\label{pesos bi}
\normalfont
We have $|\mathbf{b}_2|=|\mathbf{b}_6|=1$, $|\mathbf{b}_3|=|\mathbf{b}_5|=2$, 
$|\mathbf{b}_1|=\lceil n/2\rceil|$ and  $|\mathbf{b}_4|=\lfloor n/2\rfloor$.
\end{remark}

\begin{remark}
\normalfont
According to Table~\ref{W2 casos 4<=n<=8}, for $n=4$, we have $W_2^{(4)}=\{\mathbf{b}_i^{(4)}: i\in[6]\}$.
Also, for $n=8$,  $W_2^{(8)}=\{\mathbf{b}_i^{(8)}: i\in[6]\}$.
\end{remark}

Define $B=B^{(n)}=\{\mathbf{b}_i: i\in[6]\}$.

\begin{prop} Let $n\ge 4$ be an integer.
\label{pesos T(bi)}
\vspace{-\parskip}
\begin{enumerate}[\rm (i)]
\item If $n$ is even, then $|T(\mathbf{b}_i)|=(3n-2)/2$ for all $i\in[6]$.
\item If $n$ is odd, then \\[1mm]
$|T(\mathbf{b}_1)|=|T(\mathbf{b}_3)|=|T(\mathbf{b}_5)|= (3n-1)/2$, and\\
$|T(\mathbf{b}_2)|=|T(\mathbf{b}_4)|=|T(\mathbf{b}_6)|= (3n-3)/2$.
\end{enumerate}
\end{prop}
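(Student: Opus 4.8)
The plan is to exploit the fact that both generating sequences $\mathbf{b}_1=\overline{10}[n]$ and $\mathbf{b}_4=\overline{01}[n]$ are alternating, so their triangles collapse almost immediately. Concretely, for every index $j$ one of $j,j+1$ is even and the other odd, whence $(\partial\mathbf{b}_1)_j=(\partial\mathbf{b}_4)_j=1$; that is, $\partial\mathbf{b}_1=\partial\mathbf{b}_4=\mathbf{1}^{(n-1)}$, consistent with the fact noted before the statement that $\mathbf{b}_1$ and $\mathbf{b}_4$ are the two primitives of $\mathbf{a}_1^{(n-1)}$. Since $\mathbf{1}$ lies in the kernel of $\partial$, we obtain $\partial^2\mathbf{b}_1=\partial^2\mathbf{b}_4=\mathbf{0}^{(n-2)}$, so every row of $T(\mathbf{b}_1)$ and of $T(\mathbf{b}_4)$ below the first derivative is zero. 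Therefore each triangle weight reduces to the weights of its top two rows:
$$
|T(\mathbf{b}_1)|=|\mathbf{b}_1|+|\mathbf{1}^{(n-1)}|,\qquad |T(\mathbf{b}_4)|=|\mathbf{b}_4|+|\mathbf{1}^{(n-1)}|.
$$
Using $|\mathbf{1}^{(n-1)}|=n-1$ together with the values $|\mathbf{b}_1|=\lceil n/2\rceil$ and $|\mathbf{b}_4|=\lfloor n/2\rfloor$ recorded in Remark~\ref{pesos bi}, both weights follow directly, with no induction required.

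Next I would reduce the remaining four sequences to these two by invoking the fact, established in the introduction, that equivalent sequences generate triangles of equal weight. The bookkeeping here depends on the parity of $n$, exactly as recorded in the Remark preceding the statement. When $n$ is even we have $\mathbf{b}_2=r(\mathbf{b}_1)$, $\mathbf{b}_3=\ell(\mathbf{b}_1)$, $\mathbf{b}_4=i(\mathbf{b}_1)$, $\mathbf{b}_5=r(\mathbf{b}_4)$ and $\mathbf{b}_6=\ell(\mathbf{b}_4)$; since $r,\ell,i\in D_6$ and $\mathbf{b}_4=i(\mathbf{b}_1)$, all six sequences lie in a single equivalence class and hence share the common weight $|T(\mathbf{b}_1)|$. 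When $n$ is odd the roles split: $\mathbf{b}_1$, $\mathbf{b}_3=\ell(\mathbf{b}_1)$, $\mathbf{b}_5=r(\mathbf{b}_1)$ form one class, while $\mathbf{b}_4$, $\mathbf{b}_2=r(\mathbf{b}_4)$, $\mathbf{b}_6=\ell(\mathbf{b}_4)$ form a second class, so the first three share $|T(\mathbf{b}_1)|$ and the last three share $|T(\mathbf{b}_4)|$.

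Finally I would substitute the parity values of $\lceil n/2\rceil$ and $\lfloor n/2\rfloor$. For $n$ even both ceiling and floor equal $n/2$, giving $|T(\mathbf{b}_i)|=n/2+(n-1)=(3n-2)/2$ for all $i\in[6]$, which is part (i). For $n$ odd, $|T(\mathbf{b}_1)|=(n+1)/2+(n-1)=(3n-1)/2$ and $|T(\mathbf{b}_4)|=(n-1)/2+(n-1)=(3n-3)/2$, which combined with the class structure above yields part (ii). There is no serious obstacle in this argument, and unlike Proposition~\ref{pesos T(ei)} it needs no induction; the only point demanding care is the parity-dependent assignment of the $D_6$-images, since an incorrect pairing would wrongly merge the two classes that must remain separate when $n$ is odd.
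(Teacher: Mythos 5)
Your proof is correct and follows essentially the same route as the paper's: both arguments reduce everything to $\partial\mathbf{b}_1=\partial\mathbf{b}_4=\mathbf{1}^{(n-1)}$ (so the triangle weight is $|\mathbf{b}_i|+(n-1)$ for $i\in\{1,4\}$) and then transfer to the remaining four sequences via the parity-dependent $D_6$-equivalences. The only cosmetic difference is that you justify $|T(\mathbf{1}^{(n-1)})|=n-1$ directly from $\partial\mathbf{1}=\mathbf{0}$, whereas the paper cites the previously recorded value $|T(\mathbf{a}_1^{(n-1)})|=n-1$.
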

\begin{proof}
We use that  $\partial\mathbf{b}_1=\partial\mathbf{b}_4=\mathbf{1}^{(n-1)}=\mathbf{a}_1^{(n-1)}$
and $|\mathbf{a}_1^{(n-1)}|=n-1$.

(i) Since $n$ is even, we have $|\mathbf{b}_1|=n/2$. Then,
$$
|T(\mathbf{b}_1)|=|\mathbf{b}_1|+|T(\mathbf{a}_1^{(n-1)})|=n/2+n-1=(3n-2)/2.
$$
The sequences $\mathbf{b}_2$, $\mathbf{b}_3$, $\mathbf{b}_4$, $\mathbf{b}_5$, and
$\mathbf{b}_6$ are equivalent to $\mathbf{b}_1$. Hence, $|T(\mathbf{b}_i)|=|T(\mathbf{b}_1)|=(3n-2)/2$
for all $i\in[6]$.

(ii) Since $n$ is odd, we have $|\mathbf{b}_1|=(n+1)/2$ and $|\mathbf{b}_4|=(n-1)/2$. Then,
\begin{align*}
|T(\mathbf{b}_1)|&=|\mathbf{b}_1|+|T(\mathbf{a}_1^{(n-1)})|=(n+1)/2+n-1=(3n-1)/2, \mbox{ and}\\
|T(\mathbf{b}_4)|&=|\mathbf{b}_4|+|T(\mathbf{a}_1^{(n-1)})|=(n-1)/2+n-1=(3n-3)/2.
\end{align*}
The sequences $\mathbf{b}_3$ and $\mathbf{b}_5$ are equivalent to $\mathbf{b}_1$. Hence,
$|T(\mathbf{b}_3)|=|T(\mathbf{b}_5)|=|T(\mathbf{b}_1)|=(3n-1)/2$.
The sequences  $\mathbf{b}_2$ and  $\mathbf{b}_6$ are equivalent to $\mathbf{b}_4$. Hence,
$|T(\mathbf{b}_2)|=|T(\mathbf{b}_6)|=|T(\mathbf{b}_4)|=(3n-3)/2$.\qed
\end{proof}

\begin{prop}
\label{auxW2}
Let $n\ge 8$ an integer and $\mathbf{x}\in\mathbb{F}_2^n\setminus(W_0\cup W_1\cup B)$. Then,
$|T(\mathbf{x})|>\lfloor(3n-1)/2\rfloor$;
\end{prop}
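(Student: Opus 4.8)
The plan is to prove the statement by induction on $n$, peeling off one derivative at a time exactly as in Proposition~\ref{auxW1}. The base case $n=8$ is provided by the exhaustive search summarized in Table~\ref{W2 casos 4<=n<=8}: there $W_2^{(8)}=B^{(8)}$ with $w_2=11=\lfloor(3\cdot 8-1)/2\rfloor$, so every $\mathbf{x}\notin W_0\cup W_1\cup B$ lies beyond $W_2$ and hence has weight exceeding $11$. (This is also why the induction cannot begin earlier: for $n=7$ one has $|T(\mathbf{e}_3)|=9<10=\lfloor(3n-1)/2\rfloor$.) Throughout the inductive step I will use the identity $|T(\mathbf{x})|=|\mathbf{x}|+|T(\partial\mathbf{x})|$ and the fact that $\lfloor(3n-1)/2\rfloor$ equals $(3n-2)/2$ for even $n$ and $(3n-1)/2$ for odd $n$. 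Fix $n\ge 9$ and $\mathbf{x}\in\mathbb{F}_2^n\setminus(W_0\cup W_1\cup B)$, and split according to $|\mathbf{x}|$.

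I would first settle the weight-one sequences directly, without the induction hypothesis. If $|\mathbf{x}|=1$ then $\mathbf{x}=\mathbf{e}_k$; since $\mathbf{e}_0=\mathbf{a}_2$ and $\mathbf{e}_{n-1}=\mathbf{a}_3$ belong to $W_1$, while $\mathbf{e}_1=\mathbf{b}_2$ and $\mathbf{e}_{n-2}=\mathbf{b}_6$ belong to $B$, the hypothesis forces $2\le k\le n-3$, and by the symmetry $\mathbf{e}_{n-1-k}=i(\mathbf{e}_k)$ I may assume $k\le(n-1)/2$. Now Proposition~\ref{pesos T(e0),T(e1),T(e2)} disposes of $k\in\{2,3\}$ and Proposition~\ref{pesos T(ei)} of $k\ge 4$ (giving $|T(\mathbf{e}_k)|\ge 2n-3$), and in every case a one-line computation shows the value exceeds $\lfloor(3n-1)/2\rfloor$ for $n\ge 9$.

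Next assume $|\mathbf{x}|\ge 2$ and look at $\partial\mathbf{x}\in\mathbb{F}_2^{n-1}$. If $\partial\mathbf{x}\notin W_0^{(n-1)}\cup W_1^{(n-1)}\cup B^{(n-1)}$, the induction hypothesis yields $|T(\partial\mathbf{x})|>\lfloor(3(n-1)-1)/2\rfloor$; since these are integers, adding $|\mathbf{x}|\ge 2$ gives $|T(\mathbf{x})|\ge(3n+1)/2$ or $(3n+2)/2$ according to the parity of $n$, which beats the target. Otherwise $\partial\mathbf{x}$ lies in the special set, and I compute the two primitives of each admissible derivative. If $\partial\mathbf{x}=\mathbf{0}^{(n-1)}$ then $\mathbf{x}\in\{\mathbf{0},\mathbf{1}\}\subset W_0\cup W_1$, a contradiction. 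If $\partial\mathbf{x}\in A^{(n-1)}$, the primitives of $\mathbf{a}_1^{(n-1)}$ are $\mathbf{b}_1,\mathbf{b}_4\in B$, while those of $\mathbf{a}_2^{(n-1)}$ and $\mathbf{a}_3^{(n-1)}$ are one member of $W_1$ together with a sequence of weight $n-1$; hence the admissible $\mathbf{x}$ satisfies $|T(\mathbf{x})|=(n-1)+(n-1)=2n-2$ by Remark~\ref{pesos T(ai)}, which exceeds $\lfloor(3n-1)/2\rfloor$.

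The remaining case $\partial\mathbf{x}\in B^{(n-1)}$ is the crux, and the step I expect to be the most laborious. Here I would compute both primitives of each $\mathbf{b}_i^{(n-1)}$: for $i\in\{2,3,5,6\}$ one primitive is again a member of $B$ (for example $\mathbf{b}_2^{(n-1)}=01\cdot\overline{0}[n-3]$ has primitives $\mathbf{b}_5$ and $\overline{0}[2]\cdot\overline{1}[n-2]$), and the admissible one has weight at least $n-2$; for $i\in\{1,4\}$ the two primitives are the period-four words $\overline{0110}[n],\overline{1001}[n]$ and $\overline{0011}[n],\overline{1100}[n]$, of weight about $n/2$. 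Feeding these weights, together with the parity-dependent exact values of $|T(\mathbf{b}_i^{(n-1)})|$ from Proposition~\ref{pesos T(bi)} applied in length $n-1$, into the identity $|T(\mathbf{x})|=|\mathbf{x}|+|T(\partial\mathbf{x})|$, one obtains $|T(\mathbf{x})|\ge(5n-10)/2$ in the first family and $|T(\mathbf{x})|\ge 2n-4$ in the second, both comfortably above $\lfloor(3n-1)/2\rfloor$ for $n\ge 9$. The only genuine difficulty is the bookkeeping: correctly identifying which of the two primitives is excluded, handling the two parities of $n-1$ in Proposition~\ref{pesos T(bi)}, and evaluating the weights of the period-four primitives as functions of $n\bmod 4$.
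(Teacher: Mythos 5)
Your proposal is correct and follows essentially the same route as the paper's proof: induction from the computer-verified base case $n=8$, direct treatment of the weight-one sequences via Propositions~\ref{pesos T(e0),T(e1),T(e2)} and~\ref{pesos T(ei)}, the inductive step when $\partial\mathbf{x}$ avoids $W_0^{(n-1)}\cup W_1^{(n-1)}\cup B^{(n-1)}$, and an enumeration of primitives (the paper's Tables~\ref{d1x=ai2} and~\ref{pes x+dxb}) when it does not. The only cosmetic difference is that in the case $\partial\mathbf{x}\in B^{(n-1)}$ you track the exact weights of the admissible primitives (giving bounds like $(5n-10)/2$ and $2n-4$), whereas the paper simply uses $|\mathbf{x}|>3$ together with $|T(\mathbf{b}_i^{(n-1)})|\ge(3n-6)/2$; both suffice.
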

\begin{proof}
  By induction on $n$. For $n=8$ we have seen in
  Table~\ref{W2 casos 4<=n<=8} that the sequences in $B^{(8)}$ are
  exactly the $2$-sequences and that they generate triangles of
  weight $11=\lfloor(3n-1)/2\rfloor$.  Therefore, all sequences in
  $\mathbb{F}_2^8\setminus(W_0\cup W_1\cup B)$ generate triangles
  of weight $>11=\lfloor(3n-1)/2\rfloor$. Thus, for $n=8$ the result holds.  Now,
  suppose that $n\ge 9$ and that the result holds for $n-1$.  Let
  $\mathbf{x}\in\mathbb{F}_2^n\setminus(W_0\cup W_1\cup B)$.  Since
  $\mathbf{x}\not\in W_0$, it follows $|\mathbf{x}|\ge 1$.

  If $|\mathbf{x}|=1$, then $\mathbf{x}=\mathbf{e}_k$ for some
  $k\in\{0,\ldots,n-1\}$. By symmetry we can consider $k\le (n-1)/2$.
  As $\mathbf{x}\ne \mathbf{a_2}=\mathbf{e}_0$
  and $\mathbf{x}\ne \mathbf{b_2}=\mathbf{e}_1$, it must be $k\ge 2$.
  By Proposition~\ref{pesos T(e0),T(e1),T(e2)}, if $k=2$ we have $|T(\mathbf{x})|=
 |T(\mathbf{e}_2)|\ge 2n-4>\lfloor(3n-1)/2\rfloor$; for  $k=3$, we have
$|T(\mathbf{x})|=
 |T(\mathbf{e}_3)|\ge (9n-27)/4\ge \lfloor(3n-1)/2\rfloor$; and for $k\ge 4$,
 we have  $|T(\mathbf{x})|=|T(\mathbf{e}_k)|\ge 2n-3>\lfloor(3n-1)/2\rfloor$, as we wanted.  Then,
  in the following we can assume $|\mathbf{x}|\ge 2$.

Consider the sequence $\partial\mathbf{x}$. If
$\partial\mathbf{x}\notin W_0^{(n-1)}\cup W_1^{(n-1)}\cup B^{(n-1)}$,
by induction hypothesis we have $|T(\partial\mathbf{x})|>\lfloor (3(n-1)-1)/2\rfloor=\lfloor (3n-4)/2\rfloor$.
Then,
$$
|T(\mathbf{x})|=|\mathbf{x}|+|T(\partial\mathbf{x})|
> 2+\lfloor (3n-4)/2\rfloor 
= \lfloor 3n/2\rfloor \ge\lfloor(3n-1)/2\rfloor.
$$
\indent Now, suppose $\partial\mathbf{x}\in W_0\cup W_1\cup B$. If $\partial\mathbf{x}\in W_0$, then
$\mathbf{x}\in\{\mathbf{0},\mathbf{1}\}\subset W_0\cup W_1$, a contradiction.  Suppose
 $\partial\mathbf{x}=\mathbf{a}_i^{(n-1)}\in W_1$ for some $i\in[3]$.
Table~\ref{d1x=ai2} gives the six  possible $\mathbf{x}\in\mathbb{F}_2^n$ such that
$\partial\mathbf{x}\in W_1$. The two possible $\mathbf{x}$ which do not belong to $W_1\cup W_2$ have weight $n-1$. For these $\mathbf{x}$, we have $|T(\mathbf{x})|=|\mathbf{x}|+|T(\mathbf{a}_i^{(n-1)})|=n-1+n-1=2n-2>\lfloor(3n-1)/2\rfloor$.

\begin{d2}[htb]
$$
\begin{array}{|l|l|}
\hline
\partial\mathbf{x} & \mathbf{x} \\
\hline
\mathbf{a}_1^{(n-1)}=\overline{1}[n-1] & \overline{10}[n]=\mathbf{b}_1 \\
                                       & \overline{01}[n]=\mathbf{b}_4 \\
\hline
\mathbf{a}_2^{(n-1)}=1\cdot\overline{0}[n-1] & 0\cdot\overline{1}[n-1] \\
                                             & 1\cdot\overline{0}[n-1]=\mathbf{a}_2 \\
\hline
\mathbf{a}_3^{(n-1)}=\overline{0}[n-1]\cdot 1 & \overline{0}[n-1]\cdot 1=\mathbf{a}_3 \\
                                              & \overline{1}[n-1]\cdot 0 \\
\hline
\end{array}
$$
\vspace{-3truemm}
\caption{Sequences $\mathbf{x}$ with $\partial\mathbf{x}\in W_1^{(n-1)}$.}
\label{d1x=ai2}
\end{d2}

Finally, consider the case when $\partial\mathbf{x}=\mathbf{b}_i\in B^{(n-1)}$.
By Proposition~\ref{pesos T(bi)}, $|T(\mathbf{b}_i^{(n-1)})|\ge (3n-6)/2$. 
Table~\ref{pes x+dxb} shows the  sequences $\mathbf{x}$ such that $\partial\mathbf{x}\in B^{(n-1)}$.
In four cases, $\mathbf{x}\in B$, a contradiction. In the eight remaining cases, then $|\mathbf{x}|>3$ holds
(recall $n\ge 9$). Then,
$$
|T(\mathbf{x})|=|\mathbf{x}|+|T(\mathbf{b}_i)|> 3+(3n-6)/2=3n/2\ge\lfloor (3n-1)/2\rfloor. \qed
$$
\end{proof}

As a conclusion of the section, we have the following theorem:

\begin{thm}
\label{W2}
Let $n\ge 6$. Then $w_2=\lfloor (3n-2)/2\rfloor$ and the set $W_2$ is the following:
\vspace{-\parskip}
\begin{enumerate}[\rm (i)]
\item if $n=6$, then $W_2=\{\mathbf{b}_1,\,\mathbf{b}_2,\,\mathbf{b}_3,\,\mathbf{b}_4,\,\mathbf{b}_5, \,\mathbf{b}_6,\,
001000,\,000100, \,001100\}$;
\item if $n=7$, then  $W_2=\{\mathbf{b}_2, \,\mathbf{b}_4,\,\mathbf{b}_6,\,0001000\}$;
\item if $n\ge 8$ and $n$ is even, then  $W_2=\{\mathbf{b}_1, \,\mathbf{b}_2,\,\mathbf{b}_3, \,\mathbf{b}_4,\,
\mathbf{b}_5,\,\mathbf{b}_6\}$;
\item if $n\ge 9$ and $n$ is odd, then  $W_2=\{\mathbf{b}_2,\, \mathbf{b}_4,\,\mathbf{b}_6\}$.
\end{enumerate}
\end{thm}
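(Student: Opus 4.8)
The plan is to derive the theorem from the two facts already in hand: Proposition~\ref{pesos T(bi)}, which gives the exact weights of the triangles $T(\mathbf{b}_i)$, and Proposition~\ref{auxW2}, which guarantees that every $\mathbf{x}\in\mathbb{F}_2^n\setminus(W_0\cup W_1\cup B)$ satisfies $|T(\mathbf{x})|>\lfloor(3n-1)/2\rfloor$. The cases $n=6$ and $n=7$ are simply read off from the exhaustive computation recorded in Table~\ref{W2 casos 4<=n<=8}, so the real content is $n\ge 8$. There I would split on the parity of $n$, because both the floor defining $w_2$ and the weights of the $\mathbf{b}_i$ behave differently in the two cases. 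Throughout I use that $W_0$ and $W_1$ contribute only the weights $0$ and $w_1=n$ (Theorem~\ref{thmW1}).

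For $n\ge 8$ even, Proposition~\ref{pesos T(bi)}(i) gives $|T(\mathbf{b}_i)|=(3n-2)/2=\lfloor(3n-2)/2\rfloor$ for every $i\in[6]$. Since $3n-1$ is odd, $\lfloor(3n-1)/2\rfloor=(3n-2)/2$, so Proposition~\ref{auxW2} says that every sequence outside $W_0\cup W_1\cup B$ generates a triangle of weight strictly greater than $(3n-2)/2$. As $n<(3n-2)/2$ for $n\ge 8$, the least weight strictly above $w_1=n$ is attained precisely on $B$, giving $w_2=\lfloor(3n-2)/2\rfloor$ and $W_2=B$.

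For $n\ge 9$ odd, $3n-2$ is odd, so $\lfloor(3n-2)/2\rfloor=(3n-3)/2$. By Proposition~\ref{pesos T(bi)}(ii), $\mathbf{b}_2,\mathbf{b}_4,\mathbf{b}_6$ have weight $(3n-3)/2$ while $\mathbf{b}_1,\mathbf{b}_3,\mathbf{b}_5$ have the strictly larger weight $(3n-1)/2$. Now $3n-1$ is even, so $\lfloor(3n-1)/2\rfloor=(3n-1)/2$, and Proposition~\ref{auxW2} forces every sequence outside $B$ (and outside $W_0\cup W_1$) to have weight greater than $(3n-1)/2$. Since $n<(3n-3)/2<(3n-1)/2$ for $n\ge 9$, the list of weights in play is $0$, $n$, $(3n-3)/2$, $(3n-1)/2$, and all others exceed $(3n-1)/2$; hence the second positive weight level is exactly $(3n-3)/2$, realized precisely by $\mathbf{b}_2,\mathbf{b}_4,\mathbf{b}_6$. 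This yields $w_2=\lfloor(3n-2)/2\rfloor$ and $W_2=\{\mathbf{b}_2,\mathbf{b}_4,\mathbf{b}_6\}$.

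The substance of the argument sits in the two cited propositions; what remains is parity bookkeeping, so the delicate point is keeping the floors consistent. One must check in each parity both that $w_2>w_1=n$ (so that no sequence of $W_1$ sneaks in below the claimed value) and that the lower bound $\lfloor(3n-1)/2\rfloor$ supplied by Proposition~\ref{auxW2} is at least $w_2$, with the inequality \emph{strict} exactly when it must be. In the odd case there is the extra subtlety that $B$ splits across two weight levels: one has to confirm that $(3n-3)/2$ is genuinely the second \emph{distinct} weight and that the three heavier sequences $\mathbf{b}_1,\mathbf{b}_3,\mathbf{b}_5$ are thereby excluded from $W_2$, which follows because their weight $(3n-1)/2$ strictly exceeds $(3n-3)/2=w_2$.
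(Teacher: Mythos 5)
Your proposal is correct and follows essentially the same route as the paper: cases $n=6,7$ from Table~\ref{W2 casos 4<=n<=8}, and for $n\ge 8$ a parity split combining the exact weights from Proposition~\ref{pesos T(bi)} with the strict lower bound $\lfloor(3n-1)/2\rfloor$ from Proposition~\ref{auxW2}, noting in the odd case that $B$ splits across the two weight levels $(3n-3)/2$ and $(3n-1)/2$. The only difference is that you make explicit the (easy) check that $w_1=n$ lies strictly below the claimed $w_2$, which the paper leaves implicit.
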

\begin{proof}
Note that if $n$ is even, then $\lfloor (3n-2)/2\rfloor=(3n-2)/2$; if $n$ is odd, $\lfloor (3n-2)/2\rfloor=(3n-3)/2$.

(i) i (ii) The cases $n=6$ and $n=7$ follow from Table~\ref{W2 casos 4<=n<=8}.

(iii) If $n\ge 8$ is even, the sequences in $B$ generate triangles of
weight $(3n-2)/2$, and the sequences in $\mathbb{F}_2^n\setminus(W_0\cup W_1\cup B)$ generate
triangles of weight strictly greater than $(3n-2)/2$. Therefore, $w_2=(3n-2)/2$ and $W_2=B$.

(iv) If $n\ge 9$ is odd, the sequences  $\mathbf{b}_2$, $\mathbf{b}_4$, and
$\mathbf{b}_6$ generate triangles of weight $(3n-3)/2$, the sequences
$\mathbf{b}_1$, $\mathbf{b}_3$, i $\mathbf{b}_5$ generate
triangles of weight $(3n-1)/2$ (one unity more than the previous) and the sequences of
$\mathbb{F}_2^n\setminus(W_0\cup W_1\cup B)$ generate triangles of weight
strictly greater than $(3n-1)/2$. Therefore, $w_2=(3n-3)/2$ and
$W_2=\{\mathbf{b}_2,\,\mathbf{b}_4,\,\mathbf{b}_6\}$.\qed
\end{proof}

\begin{d2}[tb]
$$
\begin{array}{|l|l|l|}
\hline
\partial\mathbf{x} & \mathbf{x}  \\
\hline
\mathbf{b}_1^{(n-1)}=\overline{10}[n-1] &  \overline{0110}[n] \\
                                        &  \overline{1001}[n] \\
\hline
\mathbf{b}_2^{(n-1)}=01\cdot\overline{0}[n-3] & 00\cdot\overline{1}[n-2] \\
                                              & 11\cdot\overline{0}[n-2]=\mathbf{b}_5  \\
\hline
\mathbf{b}_3^{(n-1)}=\overline{0}[n-3]\cdot 11 & \overline{0}[n-2]\cdot 10=\mathbf{b}_6 \\
                                               & \overline{1}[n-2]\cdot 01  \\
\hline
\mathbf{b}_4^{(n-1)}=\overline{01}[n-1] & \overline{0011}[n] \\
                                        & \overline{1100}[n] \\
\hline
\mathbf{b}_5^{(n-1)}=11\cdot\overline{0}[n-3] & 01\cdot\overline{0}[n-2]=\mathbf{b}_2 \\
                                              & 10\cdot\overline{1}[n-2]  \\
\hline
\mathbf{b}_6^{(n-1)}=\overline{0}[n-3]\cdot 10 & \overline{0}[n-2]\cdot 11=\mathbf{b}_3 \\
                                               & \overline{1}[n-2]\cdot 00  \\
\hline
\end{array}
$$
\vspace{-3truemm}
\caption{Sequences $\mathbf{x}$ with $\partial\mathbf{x}\in B^{(n-1)}$.}
\label{pes x+dxb}
\end{d2}

\section{$3$-sequences}

From the results of the previous section we can easily solve the problem of determining  $w_3$ and $W_3$ when $n$ is odd.

\begin{thm}
\label{W3 n senar}
 If $n\ge 7$ is odd, then $w_3=(3n-1)/2$ and $W_3=\{\mathbf{b}_1, \,\mathbf{b}_3,\,\mathbf{b}_5\}$.
\end{thm}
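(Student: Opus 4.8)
The plan is to assemble three results already in hand: the identification of $w_2$ and $W_2$ for odd $n$ in Theorem~\ref{W2}(iv), the exact weights of the sequences $\mathbf{b}_i$ recorded in Proposition~\ref{pesos T(bi)}(ii), and the lower bound of Proposition~\ref{auxW2}. The guiding observation is that, for odd $n$, the quantities $(3n-3)/2$ and $(3n-1)/2$ are \emph{consecutive} integers; since every triangle weight is an integer and $w_2=(3n-3)/2$, no weight can lie strictly between $w_2$ and $(3n-1)/2$. Thus, once I show that $(3n-1)/2$ is attained and pin down exactly which sequences attain it, the equality $w_3=(3n-1)/2$ is immediate.

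First I would dispose of the base case $n=7$ directly from the tables, since Proposition~\ref{auxW2} requires $n\ge 8$ and Theorem~\ref{W2}(iv) requires $n\ge 9$. Table~\ref{W2 casos 4<=n<=8} gives $w_2^{(7)}=9=(3\cdot 7-3)/2$, and Table~\ref{W3 casos 4<=n<=8} lists the sequences of weight $\lfloor 3n/2\rfloor=10=(3\cdot 7-1)/2$ as exactly $1010101$, $1100000$, $0000011$, which are precisely $\mathbf{b}_1$, $\mathbf{b}_5$, $\mathbf{b}_3$ for $n=7$. As $10=9+1$ is attained, $w_3=(3n-1)/2$ and $W_3=\{\mathbf{b}_1,\mathbf{b}_3,\mathbf{b}_5\}$ for $n=7$.

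For $n\ge 9$ odd I would argue as follows. By Proposition~\ref{auxW2}, every $\mathbf{x}\in\mathbb{F}_2^n\setminus(W_0\cup W_1\cup B)$ satisfies $|T(\mathbf{x})|>\lfloor(3n-1)/2\rfloor=(3n-1)/2$, so any sequence whose triangle has weight at most $(3n-1)/2$ must lie in $W_0\cup W_1\cup B$. I then read off the weights of these finitely many sequences: the single element of $W_0$ has weight $0$; the three elements of $W_1$ have weight $n$ (Theorem~\ref{thmW1}); and by Proposition~\ref{pesos T(bi)}(ii) the set $B$ splits into $\mathbf{b}_2,\mathbf{b}_4,\mathbf{b}_6$ of weight $(3n-3)/2$ and $\mathbf{b}_1,\mathbf{b}_3,\mathbf{b}_5$ of weight $(3n-1)/2$. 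Using $n>3$ one verifies the strict chain
$$
0< n< (3n-3)/2< (3n-1)/2 ,
$$
so these four values exhaust the weights that are at most $(3n-1)/2$ and are therefore exactly $w_0,w_1,w_2,w_3$. In particular $w_3=(3n-1)/2$, and the sequences attaining this value are precisely $\mathbf{b}_1,\mathbf{b}_3,\mathbf{b}_5$, yielding $W_3=\{\mathbf{b}_1,\mathbf{b}_3,\mathbf{b}_5\}$.

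The argument is in essence a bookkeeping assembly of prior lemmas, so I do not anticipate a substantive obstacle. The only points demanding care are the boundary thresholds, which force the separate treatment of $n=7$ via the tables, and the integrality observation that $(3n-3)/2$ and $(3n-1)/2$ are consecutive integers for odd $n$, which (together with the strict inequality $n<(3n-3)/2$, valid exactly because $n>3$) is what rules out any intermediate weight and fixes the ordering.
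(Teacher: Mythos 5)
Your proposal is correct and follows essentially the same route as the paper: the case $n=7$ is read off from Table~\ref{W3 casos 4<=n<=8}, and for odd $n\ge 9$ the result is obtained by combining Proposition~\ref{auxW2} (which confines all weights $\le(3n-1)/2$ to sequences in $W_0\cup W_1\cup B$) with the weights of the $\mathbf{b}_i$ from Proposition~\ref{pesos T(bi)}(ii) and the fact that $(3n-1)/2=w_2+1$. Your explicit enumeration of the chain $0<n<(3n-3)/2<(3n-1)/2$ is just a slightly more spelled-out version of the paper's own bookkeeping.
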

\begin{proof}
  The case $n=7$ follows from Table~\ref{W3 casos 4<=n<=8}. Thus, we can assume, $n\ge 9$.
  If $n$ is odd, we have seen that $w_2=\lfloor (3n-2)/2\rfloor=(3n-3)/2$. Now,
 $|T(\mathbf{b}_1)|=|T(\mathbf{b}_3)|=T(\mathbf{b}_5)|=(3n-1)/2=(3n-3)/2+1=w_2+1$,
 and by Proposition~\ref{auxW2}, the weight of all triangles generated by sequences in $\mathbf{x}\in\mathbb{F}_2^n\setminus(W_0\cup W_1\cup B)$ satisfy
$|T(\mathbf{x})|>(3n-1)/2$. Hence $w_3=(3n-1)/2$ and $W_3=\{\mathbf{b}_1, \,\mathbf{b}_3,\, \mathbf{b}_5\}$. \qed
\end{proof}

From now on, in this section we consider the case when $n$ is even.

The primitives of $\mathbf{b}_4^{(n-1)}=\overline{01}[n-1]$ are $\mathbf{c}_1=\overline{0011}[n]$
and $\mathbf{c}_4=\overline{1100}[n]$. We shall see that the sequences of $W_3$ belong to the equivalence classes
of $\mathbf{c}_1$ and $\mathbf{c}_4$. Define
$$
\begin{array}{lll}
\mathbf{c}_1=\overline{0011}[n],
& \mathbf{c}_2=101\cdot\overline{0}[n-3],
& \mathbf{c}_3=\overline{0}[n-3]\cdot 100,\\
\mathbf{c}_4=\overline{1100}[n],
& \mathbf{c}_5=001\cdot\overline{0}[n-3],
& \mathbf{c}_6=\overline{0}[n-3]\cdot 101.\\
\end{array}
$$

\begin{figure}[htb]
\label{fci}
\centering
\setlength{\unitlength}{2.35mm}
\begin{picture}(10,9)
\n{0.5}{7.5} \n{1.5}{7.5} \y{2.5}{7.5} \y{3.5}{7.5} \n{4.5}{7.5} \n{5.5}{7.5} \y{6.5}{7.5} \y{7.5}{7.5}
\n{1.0}{6.5} \y{2.0}{6.5} \n{3.0}{6.5} \y{4.0}{6.5} \n{5.0}{6.5} \y{6.0}{6.5} \n{7.0}{6.5}
\y{1.5}{5.5} \y{2.5}{5.5} \y{3.5}{5.5} \y{4.5}{5.5} \y{5.5}{5.5} \y{6.5}{5.5}
\n{2.0}{4.5} \n{3.0}{4.5} \n{4.0}{4.5} \n{5.0}{4.5} \n{6.0}{4.5}
\n{2.5}{3.5} \n{3.5}{3.5} \n{4.5}{3.5} \n{5.5}{3.5}
\n{3.0}{2.5} \n{4.0}{2.5} \n{5.0}{2.5}
\n{3.5}{1.5} \n{4.5}{1.5}
\n{4.0}{0.5}
\put(8.5,7.5){\vector(-1,-2){3.7}} 
\put(3.2,0){\vector(-1,2){3.7}} 
\put(3,8.5){$\mathbf{c}_1$}
\put(7,3.5){$r(\mathbf{c}_1)=\mathbf{c}_2$}
\put(-7,3.5){$\ell(\mathbf{c}_1)=\mathbf{c}_3$}
\end{picture}
\hspace{3truecm}
\begin{picture}(10,9)
\y{0.5}{7.5} \y{1.5}{7.5} \n{2.5}{7.5} \n{3.5}{7.5} \y{4.5}{7.5} \y{5.5}{7.5} \n{6.5}{7.5} \n{7.5}{7.5}
\n{1.0}{6.5} \y{2.0}{6.5} \n{3.0}{6.5} \y{4.0}{6.5} \n{5.0}{6.5} \y{6.0}{6.5} \n{7.0}{6.5}
\y{1.5}{5.5} \y{2.5}{5.5} \y{3.5}{5.5} \y{4.5}{5.5} \y{5.5}{5.5} \y{6.5}{5.5}
\n{2.0}{4.5} \n{3.0}{4.5} \n{4.0}{4.5} \n{5.0}{4.5} \n{6.0}{4.5}
\n{2.5}{3.5} \n{3.5}{3.5} \n{4.5}{3.5} \n{5.5}{3.5}
\n{3.0}{2.5} \n{4.0}{2.5} \n{5.0}{2.5}
\n{3.5}{1.5} \n{4.5}{1.5}
\n{4.0}{0.5}
\put(8.5,7.5){\vector(-1,-2){3.7}} 
\put(3.2,0){\vector(-1,2){3.7}} 
\put(3,8.5){$\mathbf{c}_4$}
\put(7,3.5){$r(\mathbf{c}_4)=\mathbf{c}_5$}
\put(-7,3.5){$\ell(\mathbf{c}_4)=\mathbf{c}_6$}
\end{picture}
\\[10pt]
\begin{picture}(13,9)
\n{0.5}{7.5} \n{1.5}{7.5} \y{2.5}{7.5} \y{3.5}{7.5} \n{4.5}{7.5} \n{5.5}{7.5} \y{6.5}{7.5} \y{7.5}{7.5} \n{8.5}{7.5} \n{9.5}{7.5}
\n{1.0}{6.5} \y{2.0}{6.5} \n{3.0}{6.5} \y{4.0}{6.5} \n{5.0}{6.5} \y{6.0}{6.5} \n{7.0}{6.5} \y{8.0}{6.5} \n{9.0}{6.5}
\y{1.5}{5.5} \y{2.5}{5.5} \y{3.5}{5.5} \y{4.5}{5.5} \y{5.5}{5.5} \y{6.5}{5.5} \y{7.5}{5.5} \y{8.5}{5.5}
\n{2.0}{4.5} \n{3.0}{4.5} \n{4.0}{4.5} \n{5.0}{4.5} \n{6.0}{4.5} \n{7.0}{4.5} \n{8.0}{4.5}
\n{2.5}{3.5} \n{3.5}{3.5} \n{4.5}{3.5} \n{5.5}{3.5} \n{6.5}{3.5} \n{7.5}{3.5}
\n{3.0}{2.5} \n{4.0}{2.5} \n{5.0}{2.5} \n{6.0}{2.5} \n{7.0}{2.5}
\n{3.5}{1.5} \n{4.5}{1.5} \n{5.5}{1.5} \n{6.5}{1.5}
\n{4.0}{0.5} \n{5.0}{0.5} \n{6.0}{0.5}
\n{4.5}{-0.5} \n{5.5}{-0.5}
\n{5.0}{-1.5}
\put(10.5,7.5){\vector(-1,-2){4.8}} 
\put(4.2,-2){\vector(-1,2){4.8}} 
\put(4.5,8.5){$\mathbf{c}_1$}
\put(9.5,3.5){$r(\mathbf{c}_1)=\mathbf{c}_5$}
\put(-7,3.5){$\ell(\mathbf{c}_1)=\mathbf{c}_3$}
\end{picture}
\hspace{3truecm}
\begin{picture}(13,9)
\y{0.5}{7.5} \y{1.5}{7.5} \n{2.5}{7.5} \n{3.5}{7.5} \y{4.5}{7.5} \y{5.5}{7.5} \n{6.5}{7.5} \n{7.5}{7.5} \y{8.5}{7.5} \y{9.5}{7.5}
\n{1.0}{6.5} \y{2.0}{6.5} \n{3.0}{6.5} \y{4.0}{6.5} \n{5.0}{6.5} \y{6.0}{6.5} \n{7.0}{6.5} \y{8.0}{6.5} \n{9.0}{6.5}
\y{1.5}{5.5} \y{2.5}{5.5} \y{3.5}{5.5} \y{4.5}{5.5} \y{5.5}{5.5} \y{6.5}{5.5} \y{7.5}{5.5} \y{8.5}{5.5}
\n{2.0}{4.5} \n{3.0}{4.5} \n{4.0}{4.5} \n{5.0}{4.5} \n{6.0}{4.5} \n{7.0}{4.5} \n{8.0}{4.5}
\n{2.5}{3.5} \n{3.5}{3.5} \n{4.5}{3.5} \n{5.5}{3.5} \n{6.5}{3.5} \n{7.5}{3.5}
\n{3.0}{2.5} \n{4.0}{2.5} \n{5.0}{2.5} \n{6.0}{2.5} \n{7.0}{2.5}
\n{3.5}{1.5} \n{4.5}{1.5} \n{5.5}{1.5} \n{6.5}{1.5}
\n{4.0}{0.5} \n{5.0}{0.5} \n{6.0}{0.5}
\n{4.5}{-0.5} \n{5.5}{-0.5}
\n{5.0}{-1.5}
\put(10.5,7.5){\vector(-1,-2){4.8}} 
\put(4.2,-2){\vector(-1,2){4.8}} 
\put(4.5,8.5){$\mathbf{c}_4$}
\put(9.5,3.5){$r(\mathbf{c}_4)=\mathbf{c}_2$}
\put(-7,3.5){$\ell(\mathbf{c}_4)=\mathbf{c}_6$}
\end{picture}
\vspace{3mm}
\caption{The sequences $\mathbf{c}_i$ for $n=8\equiv 0\pmod{4}$ and $n=10\equiv 2\pmod{4}$.}
\end{figure}
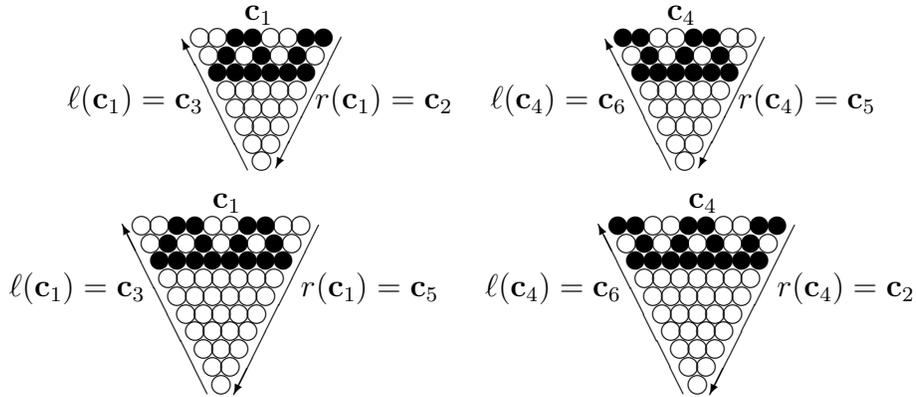

\begin{remark}
\label{simetries ci}
\normalfont
If
$n\equiv 0\pmod{4}$, then (see top of Figure~\ref{fci})
$\mathbf{c}_2=r(\mathbf{c}_1)$, $\mathbf{c}_3=\ell(\mathbf{c}_1)$,
$\mathbf{c}_4=i(\mathbf{c}_1)$, $\mathbf{c}_5=r(\mathbf{c}_4)$ and $\mathbf{c}_6=\ell(\mathbf{c}_4)$.
If
$n\equiv 2\pmod{4}$, then (see Figure~\ref{fci} down)  $\mathbf{c}_5=r(\mathbf{c}_1)$,
$\mathbf{c}_3=\ell(\mathbf{c}_1)$, $\mathbf{c}_2=r(\mathbf{c}_4)$ and
$\mathbf{c}_6=\ell(\mathbf{c}_4)$.
\end{remark}

\begin{remark}
\label{pesos ci}
\normalfont
It is clear that if $n\equiv 0\pmod{4}$, then $|\mathbf{c}_1|=|\mathbf{c}_4|=n/2$, whereas if
$n\equiv 2\pmod{4}$, then $|\mathbf{c}_1|=(n-2)/2$ i $|\mathbf{c}_4|=(n+2)/2$. Moreover, for all $n\ge 4$,
$|\mathbf{c}_2|=|\mathbf{c}_6|=2$ and $|\mathbf{c}_3|=|\mathbf{c}_5|=1$.
\end{remark}

\begin{prop} Let $n\ge 4$ be the length of the considered sequences, and assume that $n$ is even.
\label{pesos T(ci)}
\vspace{-\parskip}
\begin{enumerate}[\rm(i)]
\item If $n\equiv 0\pmod{4}$, then $|T(\mathbf{c}_i)|=2n-3$ for all $i\in[6]$.
\item  If $n\equiv 2\pmod{4}$, then
$$
|T(\mathbf{c}_1)|=|T(\mathbf{c}_3)|=|T(\mathbf{c}_5)|=2n-4,\quad
|T(\mathbf{c}_2)|=|T(\mathbf{c}_4)|=|T(\mathbf{c}_6)|=2n-2.
$$
\end{enumerate}
\end{prop}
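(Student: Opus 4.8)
The plan is to exploit the fact that equivalent sequences generate triangles of the same weight, so that only one representative per $D_6$-orbit needs to be handled. First I would read off from Remark~\ref{simetries ci} how the six sequences split into orbits. When $n\equiv 0\pmod 4$, the relations $\mathbf{c}_2=r(\mathbf{c}_1)$, $\mathbf{c}_3=\ell(\mathbf{c}_1)$, $\mathbf{c}_4=i(\mathbf{c}_1)$, $\mathbf{c}_5=r(\mathbf{c}_4)=(ri)(\mathbf{c}_1)$ and $\mathbf{c}_6=\ell(\mathbf{c}_4)=(\ell i)(\mathbf{c}_1)$ show that all six lie in the single orbit of $\mathbf{c}_1$, so it suffices to compute $|T(\mathbf{c}_1)|$. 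When $n\equiv 2\pmod 4$, the sequences split into two orbits, namely $\{\mathbf{c}_1,\mathbf{c}_3,\mathbf{c}_5\}$ (the orbit of $\mathbf{c}_1$) and $\{\mathbf{c}_2,\mathbf{c}_4,\mathbf{c}_6\}$ (the orbit of $\mathbf{c}_4$), so the two weights $|T(\mathbf{c}_1)|$ and $|T(\mathbf{c}_4)|$ must be computed separately.

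The second step reduces both computations to a single known value. Since $\mathbf{c}_1$ and $\mathbf{c}_4$ are precisely the two primitives of $\mathbf{b}_4^{(n-1)}=\overline{01}[n-1]$, we have $\partial\mathbf{c}_1=\partial\mathbf{c}_4=\mathbf{b}_4^{(n-1)}$. Applying the elementary identity $|T(\mathbf{x})|=|\mathbf{x}|+|T(\partial\mathbf{x})|$, which follows at once from the definition of the weight of a triangle, gives
$$
|T(\mathbf{c}_1)|=|\mathbf{c}_1|+|T(\mathbf{b}_4^{(n-1)})|,\qquad |T(\mathbf{c}_4)|=|\mathbf{c}_4|+|T(\mathbf{b}_4^{(n-1)})|.
$$
Because $n$ is even, $n-1$ is odd, so Proposition~\ref{pesos T(bi)}(ii) applies and yields $|T(\mathbf{b}_4^{(n-1)})|=(3(n-1)-3)/2=(3n-6)/2$.

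Finally I would substitute the weights of $\mathbf{c}_1$ and $\mathbf{c}_4$ recorded in Remark~\ref{pesos ci}. For $n\equiv 0\pmod 4$ we have $|\mathbf{c}_1|=n/2$, whence $|T(\mathbf{c}_1)|=n/2+(3n-6)/2=2n-3$, which gives part~(i). For $n\equiv 2\pmod 4$ we have $|\mathbf{c}_1|=(n-2)/2$ and $|\mathbf{c}_4|=(n+2)/2$, whence $|T(\mathbf{c}_1)|=(n-2)/2+(3n-6)/2=2n-4$ and $|T(\mathbf{c}_4)|=(n+2)/2+(3n-6)/2=2n-2$, which gives part~(ii). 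I do not expect any genuine obstacle here: the argument is essentially bookkeeping, and the only points requiring care are checking that $n-1$ is odd so that the correct case of Proposition~\ref{pesos T(bi)} is invoked, and correctly reading off the orbit decompositions from Remark~\ref{simetries ci}, since the two residues of $n$ modulo $4$ group the $\mathbf{c}_i$ differently.
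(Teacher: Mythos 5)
Your proof is correct and follows essentially the same route as the paper: reduce to the representatives $\mathbf{c}_1$ and $\mathbf{c}_4$ via the $D_6$-orbit structure of Remark~\ref{simetries ci}, peel off the top row(s), and substitute the weights from Remark~\ref{pesos ci}. The only (immaterial) difference is that you stop at $\partial\mathbf{c}_1=\mathbf{b}_4^{(n-1)}$ and quote Proposition~\ref{pesos T(bi)}(ii) for $|T(\mathbf{b}_4^{(n-1)})|=(3n-6)/2$, whereas the paper peels one more row to reach $\partial^2\mathbf{c}_1=\mathbf{a}_1^{(n-2)}$ and recomputes the same quantity directly.
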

\begin{proof}
We have $|T(\mathbf{c}_1)|=|\mathbf{c}_1|+|\partial\mathbf{c}_1|+|T(\partial^2\mathbf{c}_1)|$.
The derivative of $\mathbf{c}_1=\overline{0011}[n]$ is $\partial\mathbf{c}_1=\overline{01}[n-1]$, and
 $\partial^2\mathbf{c}_1=\overline{1}[n-2]=\mathbf{a}_1^{(n-2)}$. Therefore,
$|\partial\mathbf{c}_1|+|T(\partial^2\mathbf{c}_1)|=(n-2)/2+n-2=3n/2-3$ and $|T(\mathbf{c}_1)|=|\mathbf{c}_1|+3n/2-3$.

(i) If  $n\equiv 0\pmod{4}$, by  Remark~\ref{pesos ci} $|\mathbf{c}_1|=n/2$ and then $|T(\mathbf{c}_1)|=n/2+3n/2-3=2n-3$.
From Remark~\ref{simetries ci}, it follows $|T(\mathbf{c}_i)|=|T(\mathbf{c}_1)|=2n-3$
for all $i\in[6]$.

(ii) If  $n\equiv 2\pmod{4}$, by  Remark~\ref{pesos ci} $|\mathbf{c}_1|=(n-2)/2$ and then $|T(\mathbf{c}_1)|=(n-2)/2+3n/2-3=2n-4$.
From Remark~\ref{simetries ci}, it follows $|T(\mathbf{c}_3)|=|T(\mathbf{c}_5)=|T(\mathbf{c}_1)|$.

We make the same argument for per a $\mathbf{c}_4=\overline{1100}[n]$. The only difference is that, now,
$|\mathbf{c}_4|=(n+1)/2$. We obtain, $|T(\mathbf{c}_4)|=(n+1)/2+3n/2-3=2n-2$. By Remark~\ref{simetries ci},
$|T(\mathbf{c}_2)|=|T(\mathbf{c}_6)=|T(\mathbf{c}_4)|$.\qed
\end{proof}

Define $C=C^{(n)}=\{\mathbf{c}_i: i\in[6]\}\}$.

The inductive argument we use is not valid for $n=8$ and $n=10$, so we take $n=12$ as the basic case.
The particular (computer-aided) study of cases $n=8$, $n=10$ and the basic case $n=12$
give the results summarized in Proposition~\ref{W3 n<=8<=12}.

\begin{prop}\phantom{x}
\label{W3 n<=8<=12}
\begin{enumerate}[\rm (i)]
\item $w_3^{(8)}=13$ and
\begin{align*}
W_3^{(8)}=\{&\mathbf{c}_1,\, \mathbf{c}_2,\,  \mathbf{c}_3,\, \mathbf{c}_4,\, \mathbf{c}_5,\, \mathbf{c}_6,\\
           & 11110000, \, 00001000, \, 00010001, \, 00001111, \, 10001000, \, 00010000\}.
\end{align*}
\item $w_3^{(10)}=16$ and $W_3^{(10)}=\{\mathbf{c}_1, \, \mathbf{c}_3,  \, \mathbf{c}_5\}$.
\item $w_3^{(12)}=21$ and
$W_3^{(12)}=\{\mathbf{c}_1,\, \mathbf{c}_2,\,  \mathbf{c}_3,\, \mathbf{c}_4,\, \mathbf{c}_5,\, \mathbf{c}_6\}$.
\end{enumerate}
\end{prop}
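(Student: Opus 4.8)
The plan is to verify the three cases by a direct, finite computation, using the structure already established to isolate exactly what must be checked. Since $W_0$, $W_1$ and $W_2$ are completely determined by Theorems~\ref{thmW1} and~\ref{W2}, the value $w_3$ is by definition the minimum of $|T(\mathbf{x})|$ over $\mathbf{x}\in\mathbb{F}_2^n\setminus(W_0\cup W_1\cup W_2)$, and $W_3$ is the set of sequences attaining that minimum. For $n\in\{8,10,12\}$ we have $n$ even and $n\geq 8$, so Theorem~\ref{W2}(iii) gives $W_2=B$. Proposition~\ref{pesos T(ci)} already supplies the candidate values: for $n\equiv 0\pmod 4$ (that is, $n=8,12$) every $\mathbf{c}_i$ satisfies $|T(\mathbf{c}_i)|=2n-3$, giving the candidates $13$ and $21$; for $n\equiv 2\pmod 4$ (that is, $n=10$) the sequences $\mathbf{c}_1,\mathbf{c}_3,\mathbf{c}_5$ have $|T(\mathbf{c}_i)|=2n-4=16$ while $\mathbf{c}_2,\mathbf{c}_4,\mathbf{c}_6$ have weight $18$. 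Thus the work reduces to two assertions: \emph{(a)} no sequence outside $W_0\cup W_1\cup W_2$ has triangle weight strictly between $w_2$ and the stated value of $w_3$, and \emph{(b)} the preimage of $w_3$ is exactly the set listed.

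To settle (a) and (b) I would run an exhaustive search over $\mathbb{F}_2^n$, computing each weight through the recursion $|T(\mathbf{x})|=|\mathbf{x}|+|T(\partial\mathbf{x})|$. The cost is cut by roughly a factor of six by enumerating only one representative from each orbit of the dihedral group $D_6$, since equivalent sequences have equal triangle weight; for $n\le 12$ the residual search is trivial to automate. The output is the sorted list of attained weights together with, for the value $w_3$, its complete preimage. Concretely, one checks that for $n=8$ no weight equals $12=\lfloor 3n/2\rfloor$ (as already recorded in Table~\ref{W3 casos 4<=n<=8}), so the weights jump from $w_2=11$ to $13$; that for $n=10$ no weight equals $15$, so they jump from $14$ to $16$; and that for $n=12$ no weight lies in $\{18,19,20\}$, so they jump from $17$ to $21$.

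Having confirmed the gap, the enumeration of the minimizers finishes each case. For $n=12$ the minimum $21=2n-3$ is attained exactly on the six sequences $\mathbf{c}_1,\dots,\mathbf{c}_6$ (a single $D_6$-orbit, by Remark~\ref{simetries ci}), giving $W_3^{(12)}=C$. For $n=10$ the minimum $16=2n-4$ is attained only on $\mathbf{c}_1,\mathbf{c}_3,\mathbf{c}_5$ (the sequences $\mathbf{c}_2,\mathbf{c}_4,\mathbf{c}_6$ sitting at the strictly larger weight $18$), so $W_3^{(10)}=\{\mathbf{c}_1,\mathbf{c}_3,\mathbf{c}_5\}$. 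For $n=8$ the minimum $13=2n-3$ is attained on the orbit $C$ together with the six additional sequences $11110000$, $00001000$, $00010001$, $00001111$, $10001000$, $00010000$, which form a second $D_6$-orbit; hence $W_3^{(8)}$ is the union of these twelve sequences, as stated.

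The genuine obstacle is that, unlike the inductive arguments of Propositions~\ref{auxW1} and~\ref{auxW2}, one cannot here replace the exhaustive pass by an induction on $n$. Pinning down $w_3$ for $n=12$ requires excluding the three intermediate weights $18,19,20$, and an inductive exclusion via the primitive relation $|T(\mathbf{x})|=|\mathbf{x}|+|T(\partial\mathbf{x})|$ would need the full weight census of length-$11$ sequences up to weight $20$. That census is not available: at each length only $w_0,w_1,w_2,w_3$ (and, separately, the top weights) are known, not the entire low end of the distribution. This is precisely why $n=8,10,12$ must be handled computationally, after which they serve as the verified base cases for the induction used in the even-$n$ results that follow.
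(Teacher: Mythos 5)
Your proposal is correct and matches the paper's treatment: the paper establishes this proposition precisely by a computer-aided exhaustive search over $\mathbb{F}_2^n$ for $n=8,10,12$, offering no written proof beyond that, and your observations about the candidate weights from Proposition~\ref{pesos T(ci)} and the impossibility of an inductive shortcut are consistent with the paper's own remarks preceding the statement.
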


\begin{remark}\normalfont
The sequences of $W_3^{(8)}$ different from $\mathbf{c}_i$, form the equivalence class of
$\mathbf{g}=11110000$. Indeed, in the list above, they are $\mathbf{g}$,
$r(\mathbf{g})$, $\ell(\mathbf{g})$, $i(\mathbf{g})$, $r(i(\mathbf{g}))$ and $\ell(i(\mathbf{g}))$,
respectively.
\end{remark}

\begin{remark}\normalfont
\label{n=10, 2n-3}
For $n=10$, there exist six equivalent sequences that generate triangles of weight $2n-3=17$. They are
$$
\begin{array}{lll}
\mathbf{x}=0001000000,   & r(\mathbf{x})=0000001100, & \ell(\mathbf{x})=0010001000,\\
i(\mathbf{x})=0000001000, & i(r(\mathbf{x}))=0011000000, & i(\ell(\mathbf{x}))=0001000100.
\end{array}
$$
But, for $n=14$ no sequence $\mathbf{x}$ exists with $|T(\mathbf{x})|=25=2n-3$.
\end{remark}

\begin{prop}
\label{auxW3}
Let $n\ge 12$ be even and $\mathbf{x}\in \mathbb{F}_2^{n}\setminus(W_0\cup W_1\cup W_2\cup C)$. Then
$|T(\mathbf{x})|>2n-3$.
\end{prop}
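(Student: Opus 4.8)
The plan is to argue by induction on the even integer $n$, taking $n=12$ as the base case: Proposition~\ref{W3 n<=8<=12}(iii) already gives $w_3^{(12)}=21=2n-3$ and $W_3^{(12)}=C^{(12)}$, so every sequence outside $W_0\cup W_1\cup W_2\cup C$ has triangle weight $>2n-3$. For the inductive step with $n\ge 14$ I would \emph{not} pass to $\partial\mathbf{x}$, since that sequence has odd length $n-1$ and for odd lengths the third weight is only about $3n/2$ (Theorem~\ref{W3 n senar}), far below the target $2n-3$. Instead I would descend two levels at once, staying within even lengths, via
$$
|T(\mathbf{x})|=|\mathbf{x}|+|\partial\mathbf{x}|+|T(\partial^2\mathbf{x})|,\qquad \partial^2\mathbf{x}\in\mathbb{F}_2^{n-2}.
$$
This matches the arithmetic: the target drops from $2n-3$ to $2(n-2)-3=2n-7$, a gap of exactly $4$, which one recovers from $|\mathbf{x}|+|\partial\mathbf{x}|$.

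First I would clear the weight-$1$ sequences. If $|\mathbf{x}|=1$ then $\mathbf{x}=\mathbf{e}_k$, and by the symmetry $\mathbf{e}_{n-1-k}=i(\mathbf{e}_k)$ I may assume $k\le (n-1)/2$. Since $\mathbf{e}_0=\mathbf{a}_2\in W_1$, $\mathbf{e}_1=\mathbf{b}_2\in W_2$ and $\mathbf{e}_2=\mathbf{c}_5\in C$ are excluded, it remains to treat $k\ge 3$: Proposition~\ref{pesos T(e0),T(e1),T(e2)}(iv) gives $|T(\mathbf{e}_3)|=\lfloor(9n-20)/4\rfloor>2n-3$ for $n\ge 12$, and Proposition~\ref{pesos T(ei)} gives $|T(\mathbf{e}_k)|>2n-3$ for $k\ge 4$ and $n$ even. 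Hence from now on $|\mathbf{x}|\ge 2$. The generic case is then $\partial^2\mathbf{x}\notin W_0^{(n-2)}\cup W_1^{(n-2)}\cup W_2^{(n-2)}\cup C^{(n-2)}$: the induction hypothesis yields $|T(\partial^2\mathbf{x})|\ge 2(n-2)-2=2n-6$, so it suffices to show $|\mathbf{x}|+|\partial\mathbf{x}|\ge 4$. As $|\mathbf{x}|\ge 2$, this can fail only if $|\partial\mathbf{x}|\le 1$; but $|\partial\mathbf{x}|=0$ forces $\mathbf{x}\in\{\mathbf{0},\mathbf{1}\}$, while $|\partial\mathbf{x}|=1$ forces $\mathbf{x}$ to be a step word $\overline{0}[a]\cdot\overline{1}[b]$ or $\overline{1}[a]\cdot\overline{0}[b]$, whose only weight-$2$ instances are $\mathbf{b}_3,\mathbf{b}_5\in W_2$. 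All of these are excluded, so $|\mathbf{x}|+|\partial\mathbf{x}|\ge 4$ and $|T(\mathbf{x})|\ge 2n-2>2n-3$.

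The remaining, and most laborious, case is $\partial^2\mathbf{x}\in W_0^{(n-2)}\cup W_1^{(n-2)}\cup W_2^{(n-2)}\cup C^{(n-2)}$, which I would settle by an explicit table of the (at most four) iterated primitives of each generator, exactly as in Tables~\ref{d1x=ai2} and~\ref{pes x+dxb} but carried two levels up. For each such $\mathbf{x}$ one either recognises it as a member of an excluded set — this disposes of $W_0^{(n-2)}$, whose primitives are $\mathbf{b}_1,\mathbf{b}_4\in W_2$, and of the second primitives $\mathbf{c}_1,\mathbf{c}_4\in C$ coming from $\mathbf{a}_1^{(n-2)}$ — or one evaluates $|T(\mathbf{x})|=|\mathbf{x}|+|\partial\mathbf{x}|+|T(\partial^2\mathbf{x})|$ using the exact values of $|T(\partial^2\mathbf{x})|$ from Remark~\ref{pesos T(ai)} and Propositions~\ref{pesos T(bi)} and~\ref{pesos T(ci)}. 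The recurring phenomenon that makes this work is that the second antiderivatives of the alternating/block generators are again block words of weight about $n/2$ — for instance the second primitives of $\mathbf{a}_1^{(n-2)}$ are $\mathbf{c}_1=\overline{0011}[n]$, $\mathbf{c}_4=\overline{1100}[n]$ and $\overline{0110}[n],\overline{1001}[n]$, the last two giving $|T(\mathbf{x})|=2n-2$.

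The main obstacle is precisely this final bookkeeping, and within it the subcase $\partial^2\mathbf{x}\in C^{(n-2)}$: there $|T(\partial^2\mathbf{x})|$ can be as small as $2(n-2)-4=2n-8$ (when $n-2\equiv 2\pmod 4$), leaving a slack of only $4$, so one must verify that the iterated primitives of the light generators $\mathbf{c}_2,\mathbf{c}_3,\mathbf{c}_5,\mathbf{c}_6$ really do satisfy $|\mathbf{x}|+|\partial\mathbf{x}|\ge 6$ (those of the heavy $\mathbf{c}_1,\mathbf{c}_4$ being unproblematic), while keeping track of the $n\equiv 0,2\pmod 4$ distinction in $C$ recorded in Remarks~\ref{simetries ci} and~\ref{pesos ci}. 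It is also worth recalling why the induction cannot start below $n=12$: for $n=8$ the equivalence class of $\mathbf{g}=11110000$ and for $n=10$ the six sequences of Remark~\ref{n=10, 2n-3} generate triangles of weight $\le 2n-3$ while lying outside $C$, so the clean bound only holds from $n=12$ onward.
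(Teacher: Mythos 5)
Your plan coincides with the paper's proof in every essential respect: induction on even $n$ with base case $n=12$ from Proposition~\ref{W3 n<=8<=12}, descent by two derivative levels so as to stay among even lengths, elimination of the weight-one sequences via Propositions~\ref{pesos T(e0),T(e1),T(e2)} and~\ref{pesos T(ei)}, the bound $|\mathbf{x}|+|\partial\mathbf{x}|\ge 4$ in the generic case (your step-word argument is equivalent to the paper's observation that a weight-two $\mathbf{x}\notin\{\mathbf{b}_3,\mathbf{b}_5\}$ has $|\partial\mathbf{x}|\ge 2$), and an exhaustive table of second primitives with the thresholds $>n-1$, $>(n+2)/2$ and $\ge 6$ for the cases $\partial^2\mathbf{x}\in W_1^{(n-2)}$, $W_2^{(n-2)}$ and $C^{(n-2)}$ — exactly the paper's Tables~\ref{d2x=ai}, \ref{d2x=bi} and~\ref{d2x=ci}. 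The only remaining work is to actually write out those tables (and note that the generators with small $|T(\mathbf{c}_i^{(n-2)})|=2(n-2)-4$ are $\mathbf{c}_1,\mathbf{c}_3,\mathbf{c}_5$ when $n\equiv 0\pmod 4$, not $\mathbf{c}_2,\mathbf{c}_3,\mathbf{c}_5,\mathbf{c}_6$; the paper sidesteps this by using the uniform lower bound $2(n-2)-4$ for all six), which is routine bookkeeping rather than a missing idea.
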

\begin{proof}
  By induction on $n\ge 12$ even. For $n=12$, we have seen that
  $w_3^{(12)}=21=2n-3$ and that the sequences that generate triangles of weight
  21 are, exactly, those in $C^{(12)}$. Let $n\ge 14$ be even and let us suppose the
  proposition true for even values lower or equal to $n-2$ (and greater than or equal to 12).

Let $\mathbf{x}\in \mathbb{F}_2^{n}\setminus(W_0\cup W_1\cup W_2\cup C)$.
First, we consider the case $\partial^2\mathbf{x}\not\in W_0^{(n-2)}\cup W_1^{(n-2)}\cup W_2^{(n-2)}\cup C^{(n-2)}$.

We have  $|\mathbf{x}|\ge 1$ because $\mathbf{x}\not\in W_0$.
If $|\mathbf{x}|=1$, then $\mathbf{x}=\mathbf{e}_k$ for some $k$ which, by symmetry, we can assume to be $k\le (n-1)/2$.
Since $\mathbf{e}_0=\mathbf{a_2}\in W_1$, $\mathbf{e}_1=\mathbf{b}_2\in W_2$, and $\mathbf{e}_2=\mathbf{c}_5\in C$,
it must be $k\ge 3$. From Proposition~\ref{pesos T(e0),T(e1),T(e2)}, for $k=3$ we have
$|T(\mathbf{x})|=|T(\mathbf{e}_3)|=\lfloor(9n-20)/4\rfloor>2n-3$. For $k\ge 4$, by Proposition~\ref{pesos T(ei)}
we have $|T(\mathbf{x})|=|T(\mathbf{e}_k)|>2n-3$.

Thus, we can assume that $|\mathbf{x}|\ge 2$. By induction hypothesis, $|T(\partial^2\mathbf{x})|>2(n-2)-3=2n-7$.
Since,
$$
|T(\mathbf{x})|=|\mathbf{x}|+|\partial\mathbf{x}|+|T(\partial^2\mathbf{x})|
>|\mathbf{x}|+|\partial\mathbf{x}|+2n-7,
$$
it is sufficient to see that $s=|\mathbf{x}|+|\partial\mathbf{x}|\ge 4$.

If $|\mathbf{x}|\ge 4$, then obviously $s\ge 4$. If $|\mathbf{x}|=3$,
then $|\partial\mathbf{x}|\ge 1$ and $s\ge 4$.  If
$|\mathbf{x}|=2$, as $\mathbf{x}\ne \mathbf{b}_5$ and
$\mathbf{x}\ne \mathbf{b}_3$, we see that $\mathbf{x}$ contains the sequence
$0110$ or two non-consecutive ones; in both cases,
$|\partial\mathbf{x}|\ge 2$, and  $s\ge 4$, too.

Thus, if $\partial^2\mathbf{x}\not\in W_0^{(n-2)}\cup W_1^{(n-2)}\cup W_2^{(n-2)}\cup C^{(n-2)}$, then
$|T(\mathbf{x})|>2n-3$. Now let us consider the case
$\partial^2\mathbf{x}\in W_0^{(n-2)}\cup W_1^{(n-2)}\cup W_2^{(n-2)}\cup C^{(n-2)}$.

It can be checked in Table~\ref{d2x=0} that if $\partial^2\mathbf{x}\in W_0$, then $\mathbf{x}\in W_0\cup W_1\cup W_2$,
which contradicts the hypothesis on $\mathbf{x}$. Then, $\partial^2\mathbf{x}\not\in W_0$.

\begin{d2}[H]
$$
\begin{array}{|l|l|l|}
\hline
\partial^2\mathbf{x} & \partial\mathbf{x} & \mathbf{x} \\
\hline
\mathbf{0}^{(n-2)} & \mathbf{0}^{(n-1)} &\mathbf{0}^{(n)}\\
                   &                    &\mathbf{1}^{(n)}=\mathbf{a}_1\\
                   & \mathbf{1}^{(n-1)} &\overline{10}[n]=\mathbf{b}_1\\
                   &                    &\overline{01}[n]=\mathbf{b}_2  \\
\hline
\end{array}
$$
\caption{Sequences $\mathbf{x}$ such that $\partial^2\mathbf{x}\in W_0=\{\mathbf{0}\}$.}
\label{d2x=0}
\end{d2}

Suppose that $\partial^2\mathbf{x}=\mathbf{a}_i^{(n-2)}\in W_1$. From
Remark~\ref{pesos T(ai)}, we have
$|T(\partial^2\mathbf{x})|=|T(\mathbf{a}_i^{(n-2)})|=n-2$. If we prove that
 $|\mathbf{x}|+|\partial\mathbf{x}|>n-1$, then
$|T(\mathbf{x})|=|\mathbf{x}|+|\partial\mathbf{x})|+n-1>2n-2>2n-3$, as wanted.
Table~\ref{d2x=ai}\footnote{Table~\ref{d2x=ai} is in the last section, together with
other tables of large size.} shows, for each
$\mathbf{a}_i^{(n-2)}\in W_1^{(n-2)}$, the four possibles sequences $\mathbf{x}$ such that
$\partial^2\mathbf{x}=\mathbf{a}_i^{(n-2)}$. For those $\mathbf{x}\not\in W_0\cup
W_1\cup W_2\cup C$, a lower bound of the sum
$|\mathbf{x}|+|\partial\mathbf{x}|$ is given; in all cases the bound is $>n-1$.

Suppose that $\partial^2\mathbf{x}=\mathbf{b}_i^{(n-2)}\in
W_2^{(n-2)}$. By Proposition~\ref{pesos T(bi)} (and because $n$ is
even),
$|T(\partial^2\mathbf{x})|=|T(\mathbf{b}_i^{(n-2)})|=(3n-8)/2$. If we
prove that $|\mathbf{x}|+|\partial\mathbf{x}|>(n+2)/2$, then
$|T(\mathbf{x})|>2n-3$, as wanted. Table~\ref{d2x=bi} shows, for each
$\mathbf{b}_i^{(n-2)}\in W_2^{(n-2)}$ the four possibles $\mathbf{x}$
such that $\partial^2\mathbf{x}=\mathbf{b}_i^{(n-2)}$. For
$\mathbf{x}\not\in W_0\cup W_1\cup W_2\cup C$, a lower bound of the
sum $|\mathbf{x}|+|\partial\mathbf{x}|$ is given. In all cases the sum
is $>(n+2)/2$.

Finally, suppose that  $\partial^2\mathbf{x}\in C^{(n-2)}$. By Proposition~\ref{pesos T(ci)} (and because $n$ is even),
$|T(\partial^2\mathbf{x})|=|T(\mathbf{c}_i^{(n-2)})|\ge 2(n-2)-4=2n-8$.  If
we prove that $|\mathbf{x}|+|\partial\mathbf{x}|>5$, then
$|T(\mathbf{x})|>2n-3$, as wanted. Table~\ref{d2x=ci} shows, for each
$\mathbf{c}_i^{(n-2)}\in C^{(n-2)}$ the four possible $\mathbf{x}$ such that
$\partial^2\mathbf{x}=\mathbf{c}_i^{(n-2)}$. For $\mathbf{x}\not\in W_0\cup
W_1\cup W_2\cup C$, it is clear that
$|\mathbf{x}|+|\partial\mathbf{x}|\ge 6$. \qed
\end{proof}

As a conclusion of the section, we have the following theorem:

\begin{thm} Let $n\ge 10$ be an even integer.
\vspace{-\parskip}
\begin{enumerate}[\rm (i)]
\item If $n\equiv 0\pmod{4}$,  then $w_3=2n-3$ and
$W_3=\{\mathbf{c}_1,\,\mathbf{c}_2,\,\mathbf{c}_3,\,\mathbf{c}_4,\,\mathbf{c}_5,\,\mathbf{c}_6\}$.
\item If $n\equiv 2\pmod{4}$,  then $w_3=2n-4$ and
$W_3=\{\mathbf{c}_1,\,\mathbf{c}_3,\,\mathbf{c}_5 \}$.
\end{enumerate}
\end{thm}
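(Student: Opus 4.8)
The plan is to read off the answer from the two preparatory results, exactly as Theorem~\ref{W2} was deduced from Propositions~\ref{pesos T(bi)} and~\ref{auxW2}: Proposition~\ref{pesos T(ci)} supplies the exact weights $|T(\mathbf{c}_i)|$, while Proposition~\ref{auxW3} controls everything lying outside $W_0\cup W_1\cup W_2\cup C$. The only value not covered by Proposition~\ref{auxW3} is $n=10$, for which the conclusion of part~(ii) is already recorded in Proposition~\ref{W3 n<=8<=12}(ii); so I may assume $n\ge 12$ throughout.

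First I would assemble the complete list of triangle-weights that do not exceed $2n-3$. By Theorems~\ref{thmW1} and~\ref{W2}, the sequences of weight at most $w_2$ are precisely those of $W_0\cup W_1\cup W_2$, producing the weights $0$, $n$ and $w_2=(3n-2)/2$. By Proposition~\ref{auxW3}, every sequence outside $W_0\cup W_1\cup W_2\cup C$ generates a triangle of weight strictly greater than $2n-3$. Consequently a triangle has weight $\le 2n-3$ only if its generating sequence lies in $W_0\cup W_1\cup W_2\cup C$, and it remains only to insert the weights of the $\mathbf{c}_i$ furnished by Proposition~\ref{pesos T(ci)}.

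For $n\equiv 0\pmod 4$, Proposition~\ref{pesos T(ci)}(i) gives $|T(\mathbf{c}_i)|=2n-3$ for every $i\in[6]$. Since $2n-3>(3n-2)/2$ as soon as $n>4$, this common value exceeds $w_0,w_1,w_2$, and by the previous paragraph no weight lies strictly between $w_2$ and $2n-3$. The six sequences $\mathbf{c}_i$ are disjoint from $W_0\cup W_1\cup W_2$ because their triangle-weight $2n-3$ differs from $0,n,w_2$. Hence $2n-3$ is the next weight after $w_2$, giving $w_3=2n-3$ and $W_3=C$. For $n\equiv 2\pmod 4$, Proposition~\ref{pesos T(ci)}(ii) separates $C$ into $\{\mathbf{c}_1,\mathbf{c}_3,\mathbf{c}_5\}$ of weight $2n-4$ and $\{\mathbf{c}_2,\mathbf{c}_4,\mathbf{c}_6\}$ of weight $2n-2$. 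As $2n-4>(3n-2)/2$ for $n>6$ and no weight lies strictly between $w_2$ and $2n-4$, the fourth-smallest weight is $2n-4$, attained exactly by $\mathbf{c}_1,\mathbf{c}_3,\mathbf{c}_5$; thus $w_3=2n-4$ and $W_3=\{\mathbf{c}_1,\mathbf{c}_3,\mathbf{c}_5\}$.

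The step I would watch most carefully is the case $n\equiv 2\pmod 4$, where the threshold $2n-3$ of Proposition~\ref{auxW3} is itself skipped: for $n\ge 14$ no triangle has weight exactly $2n-3$, the nearest attainable values being $2n-4$ and $2n-2$, so $w_3$ drops to $2n-4$ rather than $2n-3$. This is precisely the contrast highlighted in Remark~\ref{n=10, 2n-3}, where $n=10$ does admit weight-$(2n-3)$ triangles and therefore behaves differently; this both explains why Proposition~\ref{auxW3} is restricted to $n\ge 12$ and why $n=10$ must be taken as a separate base case. Beyond this observation the argument is just the bookkeeping of the chain $0<n<(3n-2)/2<2n-4<2n-3<2n-2$.
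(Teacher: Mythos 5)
Your proposal is correct and follows essentially the same route as the paper: combine Proposition~\ref{pesos T(ci)} (exact weights of the $\mathbf{c}_i$) with Proposition~\ref{auxW3} (every sequence outside $W_0\cup W_1\cup W_2\cup C$ exceeds $2n-3$), noting in case $n\equiv 2\pmod 4$ that the value $2n-3$ is skipped so $w_3$ drops to $2n-4$. Your explicit handling of $n=10$ via Proposition~\ref{W3 n<=8<=12}(ii) is a small point of extra care that the paper's proof leaves implicit, since Proposition~\ref{auxW3} is only stated for $n\ge 12$.
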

\begin{proof}
(i) By Proposition~\ref{pesos T(ci)}, the sequences in $C=\{\mathbf{c}_i: i\in[6]\}$ generate
triangles of weight $2n-3$, and by Proposition~\ref{auxW3} all sequences in
$\mathbb{F}_2^n\setminus (W_0\cup W_1\cup W_2\cup C)$ generate triangles of weight $>2n-3$.
Therefore, $w_3=2n-3$ and $W_3=C$.

(ii) By Proposition~\ref{pesos T(ci)}, sequences $\mathbf{c}_1$, $\mathbf{c}_3$, and $\mathbf{c}_5$  generate
triangles of weight $2n-4$, sequences  $\mathbf{c}_2$, $\mathbf{c}_4$, and $\mathbf{c}_6$ generate triangles of weight
$2n-2>2n-4$ and, by Proposition~\ref{auxW3}, all sequences
in $\mathbb{F}_2^n\setminus (W_0\cup W_1\cup W_2\cup C)$ generate triangles of weight $>2n-3>2n-4$. Then,
$w_3=2n-4$ and $W_3=\{\mathbf{c}_1,\,\mathbf{c}_3,\,\mathbf{c}_5 \}$. \qed
\end{proof}

\section{$m$-sequences}

H.~Harborth~\cite{Harborth} observed that $w_m=\lceil n(n+1)/3\rceil$ and gave sequences that generate triangles
with this weight; that is, sequences in $W_m$. In this section we determine the set $W_m$ and re-obtain the value of $w_m$.

For $n\ge 2$, define
$$
\mathbf{z}_1=\overline {110}[n],\quad
\mathbf{z}_2=\overline{011}[n], \quad
\mathbf{z}_3=\overline{101}[n].
$$
and $Z=Z^{(n)}=\{\mathbf{z}_1,\, \mathbf{z}_2,\, \mathbf{z}_3\}$.

\begin{remark}
\normalfont
If $n\equiv 0\pmod{3}$, then $\mathbf{z}_2=r(\mathbf{z}_1)$ and $\mathbf{z}_3=\ell(\mathbf{z}_1)$
(see Figure~\ref{fzi}, top left).
If $n\equiv 2\pmod{3}$, then $\mathbf{z}_2=\ell(\mathbf{z}_1)$ and $\mathbf{z}_3=r(\mathbf{z}_1)$
(see Figure~\ref{fzi}, top right).
If $n\equiv 1\pmod{3}$, then $\mathbf{z}_i=r(\mathbf{z}_i)=\ell(\mathbf{z}_i)$ for $i\in[3]$, and
$\mathbf{z}_3=i(\mathbf{z}_1)$ (see Figure~\ref{fzi} bottom).
\end{remark}

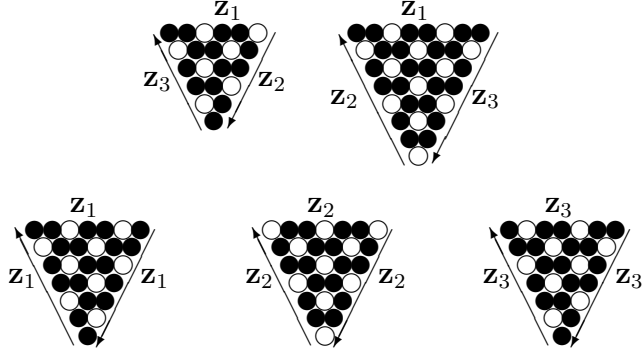
\begin{figure}[htb]
\centering
\setlength{\unitlength}{2.35mm}
\begin{picture}(10,10)
\y{0.5}{7.5} \y{1.5}{7.5} \n{2.5}{7.5} \y{3.5}{7.5} \y{4.5}{7.5} \n{5.5}{7.5}
\n{1.0}{6.5} \y{2.0}{6.5} \y{3.0}{6.5} \n{4.0}{6.5} \y{5.0}{6.5}
\y{1.5}{5.5} \n{2.5}{5.5} \y{3.5}{5.5} \y{4.5}{5.5}
\y{2.0}{4.5} \y{3.0}{4.5} \n{4.0}{4.5}
\n{2.5}{3.5} \y{3.5}{3.5}
\y{3.0}{2.5}
\put(6.5,7.5){\vector(-1,-2){2.7}} 
\put(2.3,2){\vector(-1,2){2.7}} 
\put(3,8.5){$\mathbf{z}_1$}
\put(5.5,4.5){$\mathbf{z}_2$}
\put(-1,4.5){$\mathbf{z}_3$}
\end{picture}
\begin{picture}(8,10)
\y{0.5}{7.5} \y{1.5}{7.5} \n{2.5}{7.5} \y{3.5}{7.5} \y{4.5}{7.5} \n{5.5}{7.5} \y{6.5}{7.5} \y{7.5}{7.5}
\n{1.0}{6.5} \y{2.0}{6.5} \y{3.0}{6.5} \n{4.0}{6.5} \y{5.0}{6.5} \y{6.0}{6.5} \n{7.0}{6.5}
\y{1.5}{5.5} \n{2.5}{5.5} \y{3.5}{5.5} \y{4.5}{5.5} \n{5.5}{5.5} \y{6.5}{5.5}
\y{2.0}{4.5} \y{3.0}{4.5} \n{4.0}{4.5} \y{5.0}{4.5} \y{6.0}{4.5}
\n{2.5}{3.5} \y{3.5}{3.5} \y{4.5}{3.5} \n{5.5}{3.5}
\y{3.0}{2.5} \n{4.0}{2.5} \y{5.0}{2.5}
\y{3.5}{1.5} \y{4.5}{1.5}
\n{4.0}{0.5}
\put(8.5,7.5){\vector(-1,-2){3.7}} 
\put(3.2,0){\vector(-1,2){3.7}} 
\put(3,8.5){$\mathbf{z}_1$}
\put(7,3.5){$\mathbf{z}_3$}
\put(-1,3.5){$\mathbf{z}_2$}
\end{picture}
\\
\begin{picture}(8,10)
\y{1.0}{6.5} \y{2.0}{6.5} \n{3.0}{6.5} \y{4.0}{6.5} \y{5.0}{6.5} \n{6.0}{6.5} \y{7.0}{6.5}
\n{1.5}{5.5} \y{2.5}{5.5} \y{3.5}{5.5} \n{4.5}{5.5} \y{5.5}{5.5} \y{6.5}{5.5}
\y{2.0}{4.5} \n{3.0}{4.5} \y{4.0}{4.5} \y{5.0}{4.5} \n{6.0}{4.5}
\y{2.5}{3.5} \y{3.5}{3.5} \n{4.5}{3.5} \y{5.5}{3.5}
\n{3.0}{2.5} \y{4.0}{2.5} \y{5.0}{2.5}
\y{3.5}{1.5} \n{4.5}{1.5}
\y{4.0}{0.5}
\put(7.8,6.5){\vector(-1,-2){3.3}} 
\put(3.2,0){\vector(-1,2){3.3}} 
\put(3,7.5){$\mathbf{z}_1$}
\put(7,3.5){$\mathbf{z}_1$}
\put(-0.5,3.5){$\mathbf{z}_1$}
\end{picture}
\hspace{1truecm}
\begin{picture}(8,10)
\n{1.0}{6.5} \y{2.0}{6.5} \y{3.0}{6.5} \n{4.0}{6.5} \y{5.0}{6.5} \y{6.0}{6.5} \n{7.0}{6.5}
\y{1.5}{5.5} \n{2.5}{5.5} \y{3.5}{5.5} \y{4.5}{5.5} \n{5.5}{5.5} \y{6.5}{5.5}
\y{2.0}{4.5} \y{3.0}{4.5} \n{4.0}{4.5} \y{5.0}{4.5} \y{6.0}{4.5}
\n{2.5}{3.5} \y{3.5}{3.5} \y{4.5}{3.5} \n{5.5}{3.5}
\y{3.0}{2.5} \n{4.0}{2.5} \y{5.0}{2.5}
\y{3.5}{1.5} \y{4.5}{1.5}
\n{4.0}{0.5}
\put(7.8,6.5){\vector(-1,-2){3.3}} 
\put(3.2,0){\vector(-1,2){3.3}} 
\put(3,7.5){$\mathbf{z}_2$}
\put(7,3.5){$\mathbf{z}_2$}
\put(-0.5,3.5){$\mathbf{z}_2$}
\end{picture}
\hspace{1truecm}
\begin{picture}(8,10)
\y{1.0}{6.5} \n{2.0}{6.5} \y{3.0}{6.5} \y{4.0}{6.5} \n{5.0}{6.5} \y{6.0}{6.5} \y{7.0}{6.5}
\y{1.5}{5.5} \y{2.5}{5.5} \n{3.5}{5.5} \y{4.5}{5.5} \y{5.5}{5.5} \n{6.5}{5.5}
\n{2.0}{4.5} \y{3.0}{4.5} \y{4.0}{4.5} \n{5.0}{4.5} \y{6.0}{4.5}
\y{2.5}{3.5} \n{3.5}{3.5} \y{4.5}{3.5} \y{5.5}{3.5}
\y{3.0}{2.5} \y{4.0}{2.5} \n{5.0}{2.5}
\n{3.5}{1.5} \y{4.5}{1.5}
\y{4.0}{0.5}
\put(7.8,6.5){\vector(-1,-2){3.3}} 
\put(3.2,0){\vector(-1,2){3.3}} 
\put(3,7.5){$\mathbf{z}_3$}
\put(7,3.5){$\mathbf{z}_3$}
\put(-0.5,3.5){$\mathbf{z}_3$}
\end{picture}
\caption{Triangles generated by the sequences $\mathbf{z}_1$, $\mathbf{z}_2$ and $\mathbf{z}_3$.}
\label{fzi}
\end{figure}

If $q$ is a rational number, $\langle q\rangle$ stands by the integer
closest to $q$ (when we use this notation, this integer will be
unique).

\begin{remark}
\label{pesos m}
\normalfont
The weight of the sequences $\mathbf{z}_i$ are easily obtained from the definition:
$$
|\mathbf{z}_1|=\lceil 2n/3\rceil, \quad 
|\mathbf{z}_2|=\lfloor 2n/3\rfloor, \quad 
|\mathbf{z}_3|=\langle 2n/3\rangle.
$$
 Thus,
\begin{enumerate}[(i)]
\vspace{-1.5\parskip}
\item If $n\equiv 0 \pmod{3}$, then
$|\mathbf{z}_1|=|\mathbf{z}_2|=|\mathbf{z}_3|=2n/3$;
\item if $n\equiv 1\pmod{3}$, then
$|\mathbf{z}_1|=|\mathbf{z}_3|=(2n+1)/3$, and  $|\mathbf{z}_2|=(2n-2)/3$;
\item if $n\equiv 2\pmod{3}$, then
$|\mathbf{z}_1|=(2n+2)/3$, and $|\mathbf{z}_2|=|\mathbf{z}_3|=(2n-1)/3$.
\end{enumerate}
\end{remark}

\begin{remark}
\label{derivades z}
\normalfont
Note that, for $n\ge 3$,
$
\partial\mathbf{z}_1^{(n)}=\mathbf{z}_2^{(n-1)},\
\partial\mathbf{z}_2^{(n)}=\mathbf{z}_3^{(n-1)}, \
\partial\mathbf{z}_3^{(n)}=\mathbf{z}_1^{(n-1)}.
$
Moreover,
\begin{align*}
&|\mathbf{z}_1|+|\partial\mathbf{z}_1|+|\partial^2\mathbf{z}_1|
=|\mathbf{z}_1^{(n)}|+|\mathbf{z}_2^{(n-1)}|+|\mathbf{z}_3^{(n-2)}|=2n-2,\\
&|\mathbf{z}_2|+|\partial\mathbf{z}_2|+|\partial^2\mathbf{z}_2|
=|\mathbf{z}_2^{(n)}|+|\mathbf{z}_3^{(n-1)}|+|\mathbf{z}_1^{(n-2)}|=2n-2,\\
&|\mathbf{z}_3|+|\partial\mathbf{z}_3|+|\partial^2\mathbf{z}_3|
=|\mathbf{z}_3^{(n)}|+|\mathbf{z}_1^{(n-1)}|+|\mathbf{z}_2^{(n-2)}|=2n-2.
\end{align*}
\end{remark}

First, we calculate the weights of the triangles $|T(\mathbf{z}_1)|$,  $|T(\mathbf{z}_2)|$ and  $|T(\mathbf{z}_3)|$.
Afterwards, we shall see that  $|T(\mathbf{x})|<|T(\mathbf{z})|$ for all $\mathbf{x}\notin Z$, $\mathbf{z}\in Z$
and $n\ge 5$.

\begin{prop} Let $n\ge 3$ be an integer.
\label{pesos T(z)}
\vspace{-\parskip}
\begin{enumerate}[\rm (i)]
\item If $n\equiv 0,2\pmod{3}$, then $|T(\mathbf{z}_i)|=n(n+1)/3$ for all $i\in[3]$.
\item If $n\equiv 1\pmod 3$, then $|T(\mathbf{z}_1)|=|T(\mathbf{z}_3)|=(n-1)(n+2)/3+1$ and
$|T(\mathbf{z}_2)|=(n-1)(n+2)/3$.
\end{enumerate}
\end{prop}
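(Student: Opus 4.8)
The plan is to prove all three identities simultaneously by induction on $n$ in steps of three, exploiting the cyclic behaviour of the derivative recorded in Remark~\ref{derivades z}. Write $f_i(n)=|T(\mathbf{z}_i^{(n)})|$ for $i\in[3]$. The basic recursion $|T(\mathbf{x})|=|\mathbf{x}|+|T(\partial\mathbf{x})|$ together with $\partial\mathbf{z}_1=\mathbf{z}_2$, $\partial\mathbf{z}_2=\mathbf{z}_3$, $\partial\mathbf{z}_3=\mathbf{z}_1$ shows that the first three rows of $T(\mathbf{z}_1^{(n)})$ are $\mathbf{z}_1^{(n)}$, $\mathbf{z}_2^{(n-1)}$, $\mathbf{z}_3^{(n-2)}$, and that $\partial^3\mathbf{z}_1^{(n)}=\mathbf{z}_1^{(n-3)}$. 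Hence
$$
f_1(n)=\bigl(|\mathbf{z}_1^{(n)}|+|\mathbf{z}_2^{(n-1)}|+|\mathbf{z}_3^{(n-2)}|\bigr)+f_1(n-3),
$$
and the parenthesised sum equals $2n-2$ by Remark~\ref{derivades z}. The identical computation starting from $\mathbf{z}_2$ and from $\mathbf{z}_3$ gives the single clean recurrence
$$
f_i(n)=f_i(n-3)+(2n-2),\qquad i\in[3],
$$
valid for $n\ge 6$. This telescoping into consecutive triples is the heart of the argument; everything else is verification.

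It then remains to check that the proposed closed forms satisfy this recurrence and to supply one base case in each residue class modulo $3$. For the consistency check, set $g(n)=n(n+1)/3$ and $h(n)=(n-1)(n+2)/3$; a one-line computation gives $g(n)-g(n-3)=2n-2$ and $h(n)-h(n-3)=2n-2$, so the formula $n(n+1)/3$ of part (i) and the formula $(n-1)(n+2)/3$ of part (ii) both reproduce the recurrence, and the additive constant $+1$ attached to $f_1$ and $f_3$ in the case $n\equiv 1\pmod 3$ is preserved by a step of size three (which keeps $n$ in its residue class). For the base cases I would simply compute the three triangles for $n=3$ (residue $0$), $n=4$ (residue $1$) and $n=5$ (residue $2$), obtaining $f_i(3)=4=3\cdot 4/3$, then $f_1(4)=f_3(4)=7=(3\cdot 6/3)+1$ with $f_2(4)=6=3\cdot 6/3$, and finally $f_i(5)=10=5\cdot 6/3$, each agreeing with the claimed values. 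Induction on $n$ in steps of three then yields the proposition for all $n\ge 3$.

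There is no genuinely hard step here: the crux is the observation, already isolated in Remark~\ref{derivades z}, that grouping the rows of each $T(\mathbf{z}_i)$ into consecutive triples collapses the weight into the recurrence $f_i(n)=f_i(n-3)+2n-2$. The only place demanding care is the bookkeeping of the three residue classes modulo $3$ and of the extra $+1$ that distinguishes $\mathbf{z}_2$ from $\mathbf{z}_1,\mathbf{z}_3$ when $n\equiv 1\pmod 3$; separating the base cases by residue, as above, disposes of this cleanly. One could instead sum the triple-weights $2(n-3j)-2$ directly, but then the incomplete final group forces a case split on $n\bmod 3$, so the inductive formulation is tidier.
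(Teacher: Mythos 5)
Your proof is correct, but it takes a genuinely different (and arguably cleaner) route than the paper. The paper inducts in steps of one: it writes $|T(\mathbf{z}_1^{(n)})|=|\mathbf{z}_1^{(n)}|+|T(\mathbf{z}_2^{(n-1)})|$, applies the induction hypothesis at $n-1$ (which lies in a different residue class, so the case analysis on $n\bmod 3$ has to be threaded through each step), and then transfers the result to $\mathbf{z}_2$ and $\mathbf{z}_3$ via the $D_6$-equivalences $\mathbf{z}_2=r(\mathbf{z}_1)$, $\mathbf{z}_3=\ell(\mathbf{z}_1)$ (or $\mathbf{z}_3=i(\mathbf{z}_1)$ when $n\equiv 1\pmod 3$). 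You instead iterate the derivative three times, so that $\partial^3\mathbf{z}_i^{(n)}=\mathbf{z}_i^{(n-3)}$, and collapse the three stripped rows into the single quantity $2n-2$ already computed in Remark~\ref{derivades z}; this yields the uniform recurrence $f_i(n)=f_i(n-3)+(2n-2)$, which stays inside one residue class and treats all three sequences identically, so no appeal to the dihedral symmetries is needed. What the paper's version buys is that it reuses the individual weights $|\mathbf{z}_i^{(n)}|$ from Remark~\ref{pesos m} and the equivalence machinery it has already set up for the other families $A$, $B$, $C$; what yours buys is the elimination of the residue-class bookkeeping inside the inductive step and of the case split on which element of $D_6$ carries $\mathbf{z}_1$ to $\mathbf{z}_2$ or $\mathbf{z}_3$. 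Your verification that $g(n)-g(n-3)=h(n)-h(n-3)=2n-2$ and your base cases $n=3,4,5$ are all correct and match the values in the paper.
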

\begin{proof}
By induction on $n$. For $n\in\{3,4,5\}$ we have: $|T(\mathbf{z}_i^{(3)})|=4=n(n+1)/3$ for all $i\in[3]$;
 $|T(\mathbf{z}_1^{(4)})|=|T(\mathbf{z}_3^{(4)})|=7=(n-1)(n+2)/3+1$, $|T(\mathbf{z}_2^{(4)})|=6=(n-1)(n+2)/3$;
and $|T(\mathbf{z}_i^{(5)})|=10=n(n+1)/3$ for all $i\in[3]$. Then, for $n\in\{3,4,5\}$ the proposition holds. Assume
$n\ge 6$ and the result is true for $n-1$.

(i) Let $n\equiv 0\pmod{3}$. Then,  $|\mathbf{z}_1|=2n/3$,  $n-1\equiv 2\pmod{3}$ and
$|T(\partial\mathbf{z}_1)|=|T(\mathbf{z}_2^{(n-1)})|=(n-1)n/3$.
Hence,
$$
|T(\mathbf{z}_1)|=|\mathbf{z}_1|+|T(\partial\mathbf{z}_1)|
=2n/3+(n-1)n/3=n(n+1)/3.
$$
Now, $\mathbf{z}_2=r(\mathbf{z}_1)$, $\mathbf{z}_3=\ell(\mathbf{z}_1)$. It follows that
$|T(\mathbf{z}_1)|=|T(\mathbf{z}_2)|=|T(\mathbf{z}_3)|=n(n+1)/3$.

If $n\equiv 2\pmod{3}$, we have $|\mathbf{z}_1|=(2n+2)/3$ and $n-1\equiv 1\pmod{3}$ and
$|T(\partial\mathbf{z}_1)|=|T(\mathbf{z}_2^{(n-1)})|=(n-2)(n+1)/3$.
Therefore, $|T(\mathbf{z}_1)|=(2n+2)/3+(n-2)(n+1)/3=n(n+1)/3$.
The sequences $\mathbf{z}_1$, $\mathbf{z}_2$ and $\mathbf{z}_3$ are equivalent,
so they generate triangles of the same weight.

(ii) If $n\equiv 1\pmod{3}$, we have  $|\mathbf{z}_1|=(2n+1)/3$ and $n-1\equiv 0\pmod{3}$ and
$|T(\partial\mathbf{z}_1)|=|T(\mathbf{z}_2^{(n-1)})|=(n-1)n/3$.
Hence, $|T(\mathbf{z}_1)|=(2n+1)/3+(n-1)n/3=(n-1)(n+2)/3+1$. Moreover, $\mathbf{z}_3=i(\mathbf{z}_1)$ implies
that $|T(\mathbf{z}_3)|=|T(\mathbf{z}_1)|=(n-1)(n+2)/3+1$.

It remains to calculate  $|T(\mathbf{z}_2)|$ when $n\equiv 1\pmod{3}$. In this case, $|\mathbf{z}_2|=(2n-2)/3$ and
$n-1\equiv 0\pmod{3}$ and $|T(\partial\mathbf{z}_2)|=|T(\mathbf{z}_3^{(n-1)})|=(n-1)n/3$.
Therefore, $|T(\mathbf{z}_2)|=(2n-2)/3+(n-1)n/3=(n-1)(n+2)/3$. \qed
\end{proof}

\begin{remark}\normalfont
\label{pesos T(z) bis} The weights of triangles $T(\mathbf{z}_i)$ given in Proposition~\ref{pesos T(z)} 
can be formulated in the following alternative way:
$$
|T(\mathbf{z}_1)|=|T(\mathbf{z}_3)|=\lceil n(n+1)/3\rceil,\quad |T(\mathbf{z}_2)|=\lfloor n(n+1)/3\rfloor.
$$
\end{remark} 
To see that, for $n\ge 5$, if $\mathbf{x}\in\mathbb{F}_2^n\setminus Z$ and $\mathbf{z}\in Z$ then
$|T(\mathbf{x})|<|T(\mathbf{z})|$, we begin by considering the weight of the first three
rows of triangles of size $n$.  For a sequence $\mathbf{x}\in\mathbb{F}_2^n$,
we define $s_3(\mathbf{x})=|\mathbf{x}|+|\partial\mathbf{x}|+|\partial^2\mathbf{x}|$.
Our first goal is to prove that
$s_3(\mathbf{x})\le s_3(\mathbf{z})$
for all $\mathbf{x}\in\mathbb{F}_2^n$ and $\mathbf{z}\in Z$. Recall that, from Remark~\ref{derivades z},
we have $s_3(\mathbf{z})=2n-2$ for all $\mathbf{z}\in Z$.

The proof of the following lemma is a direct verification.

\begin{lemma}
\label{s3x n45}
Let $n\in\{4,5\}$ and $\mathbf{x}\in\mathbb{F}_2^n$.
\begin{enumerate}[\rm (i)]
\item
If $n=4$, then  $s_3(\mathbf{x})\le 6$. Moreover,
$s_3(\mathbf{x})=6$ if, and only if,
$$
\mathbf{x}\in\{(1,1,0,1), \, (0,1,1,0), \, (1,0,1,1), \, (1,0,0,1)\}.
$$
\item
If $n=5$, then $s_3(\mathbf{x})\le 8$. Moreover,
$s_3(\mathbf{x})=8$ if, and only if,
$$
\mathbf{x}\in\{(1,1,0,1,1), \, (0,1,1,0,1),\, (1,0,1,1,0),\, (1,0,0,1,1),\, (1,1,0,0,1)\}.
$$
\end{enumerate}
\end{lemma}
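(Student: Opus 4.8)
The plan is to treat this as a finite but \emph{structured} verification rather than a blind enumeration of the $2^4+2^5$ sequences, using two facts already available from Proposition~\ref{casos n<=4}. First, every size-$2$ subtriangle has weight $0$ or $2$, and weight $0$ exactly when its two top entries both vanish (this is the case $n=2$, where the only weight-$0$ sequence is $00$). Second, every size-$3$ subtriangle has weight at most $4$, with equality precisely when its three generating entries form one of $110$, $011$, $101$ (this is the case $n=3$, where $w_2^{(3)}=4$ and $W_2^{(3)}=\{110,011,101\}$). Note that $s_3(\mathbf{x})$ is exactly the number of ones in the trapezoid formed by the top three rows of $T(\mathbf{x})$, and I would bound it by covering that trapezoid with the consecutive size-$3$ subtriangles $S_\ell$ rooted at the entries $x_\ell,x_{\ell+1},x_{\ell+2}$.

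For $n=4$ there are only two such subtriangles, $S_0$ and $S_1$; together they cover all $9$ cells, with $S_0\cap S_1$ equal to the size-$2$ subtriangle $T$ on $(x_1,x_2)$, so $s_3(\mathbf{x})=|S_0|+|S_1|-|T|$. If $|T|=2$ then $s_3(\mathbf{x})\le 4+4-2=6$, and equality forces $|S_0|=|S_1|=4$, so both $(x_0,x_1,x_2)$ and $(x_1,x_2,x_3)$ lie in $\{110,011,101\}$; matching the overlap gives $\mathbf{x}\in\{1101,0110,1011\}$. If instead $|T|=0$ then $x_1=x_2=0$, so $S_0$ and $S_1$ are generated by $(x_0,0,0)$ and $(0,0,x_3)$ and have weights $3x_0$ and $3x_3$; hence $s_3(\mathbf{x})\le 3+3=6$, with equality exactly at $\mathbf{x}=1001$. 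This delivers both the bound and the stated equality set.

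For $n=5$ I would use the three subtriangles $S_0,S_1,S_2$; inclusion--exclusion (the single cell $x_2$ shared by $S_0$ and $S_2$ cancels against the triple intersection) gives the exact identity $s_3(\mathbf{x})=|S_0|+|S_1|+|S_2|-|T_{12}|-|T_{23}|$, where $T_{12},T_{23}$ are the size-$2$ subtriangles on $(x_1,x_2)$ and $(x_2,x_3)$. When $|T_{12}|=|T_{23}|=2$ one gets $s_3(\mathbf{x})\le 12-4=8$, with equality iff all three windows are weight-$2$ triples; those conditions force $x_3=x_0$ and $x_4=x_1$, so $\mathbf{x}=\overline{x_0x_1x_2}[5]$ with $(x_0,x_1,x_2)$ of weight $2$, giving $\{11011,10110,01101\}$. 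The remaining cases $|T_{12}|=0$ and $|T_{23}|=0$ are interchanged by reversal, which preserves $s_3$ since $\partial$ commutes with it; in the first, $x_1=x_2=0$ and a short computation of the three weights in the free variables $x_0,x_3,x_4$ yields $s_3(\mathbf{x})=3x_0+3x_3+2x_4+[x_3\ne x_4]\le 8$, with equality at $\mathbf{x}=10011$, and reversal produces $11001$.

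The step needing the most care is exactly this boundary analysis for $n=5$ when one overlap vanishes: there the generic estimate $|S_\ell|\le 4$ is too weak, because imposing $x_1=x_2=0$ simultaneously constrains all of $S_0,S_1,S_2$, so one must recompute their weights under that constraint rather than treat them as independent maxima. Once the three subcases are assembled (Case~A plus the two reversal-symmetric boundary cases) the two equality sets come out exactly as stated, and as a bonus the argument explains the extremal sequences as the period-$3$ weight-$2$ patterns together with the ``$1001$-type'' boundary sequences.
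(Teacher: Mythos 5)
Your argument is correct: the inclusion--exclusion identities $s_3(\mathbf{x})=|S_0|+|S_1|-|T|$ for $n=4$ and $s_3(\mathbf{x})=|S_0|+|S_1|+|S_2|-|T_{12}|-|T_{23}|$ for $n=5$ are exact (the single-cell intersection $S_0\cap S_2=\{x_2\}$ does cancel against the triple intersection), the two ingredients you import from Proposition~\ref{casos n<=4} are exactly the weight data for $n=2$ and $n=3$, and the three subcases (all overlaps of weight $2$; $|T_{12}|=0$; $|T_{23}|=0$ by reversal, which preserves $s_3$ because $\partial i=i\partial$) exhaust all possibilities and reproduce precisely the two stated equality sets. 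The paper, however, proves this lemma by direct verification --- an enumeration of the $16+32$ sequences --- and reserves the covering-by-subtriangles idea for the general bound in Proposition~\ref{s3x}, where the triangles $S_i$ and the two-row gap triangles $t_i$ play the role of your $S_\ell$ and $T_{\ell,\ell+1}$. So your proof is a genuinely different (and more informative) route for the lemma itself: it explains \emph{why} the extremal sequences are the period-$3$ weight-$2$ patterns together with the $1001$-type boundary sequences, and it previews the mechanism of the general case, at the cost of the careful boundary analysis you rightly flag (when an overlap vanishes, the constraint $x_1=x_2=0$ must be propagated into all three windows rather than bounding them independently). What the paper's brute-force check buys in exchange is immunity from exactly that kind of subtlety in a statement that is, after all, finite.
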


\begin{prop}
\label{s3x}
Let $n\ge 4$ be an integer. Then $s_3(\mathbf{x})\le 2n-2$ for all $\mathbf{x}\in\mathbb{F}_2^n$.
\end{prop}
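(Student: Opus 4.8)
The plan is to prove the statement by induction on $n$, but to make the induction close one must strengthen it so as to keep track of how the sequence ends. The obstruction to a naive induction is already visible in the recursion for $s_3$ under deletion of the last coordinate. Writing $\mathbf{x}'=(x_0,\dots,x_{n-2})$ for the length-$(n-1)$ prefix of $\mathbf{x}$, appending the single coordinate $x_{n-1}$ adds exactly one new entry to each of the rows $\partial^0\mathbf{x}$, $\partial^1\mathbf{x}$, $\partial^2\mathbf{x}$, namely $x_{n-1}$, $x_{n-2}+x_{n-1}$ and $x_{n-3}+x_{n-1}$. Hence, for $n\ge 3$,
$$
s_3(\mathbf{x})=s_3(\mathbf{x}')+\delta,\qquad \delta=\#\{\text{ones among }x_{n-1},\,x_{n-2}+x_{n-1},\,x_{n-3}+x_{n-1}\}.
$$
Since $\delta$ counts three bits, $\delta\le 2$ unless all three new entries equal $1$, which happens exactly when $x_{n-3}=x_{n-2}=0$ and $x_{n-1}=1$, i.e. when $\mathbf{x}$ ends in $001$; in that case $\delta=3$. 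A plain induction giving $s_3(\mathbf{x}')\le 2(n-1)-2$ would then yield only $s_3(\mathbf{x})\le 2n-1$ for sequences ending in $001$, one too many.

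The remedy, which I expect to be the crux of the argument, is a boundary refinement of the inductive hypothesis: prove simultaneously that (A) $s_3(\mathbf{x})\le 2n-2$ for every $\mathbf{x}\in\mathbb{F}_2^n$, and (B) if $x_{n-2}=x_{n-1}=0$ then $s_3(\mathbf{x})\le 2n-3$. These two statements feed into each other cleanly. For (B), if $\mathbf{x}$ ends in $00$ then the first two new entries vanish and $\delta=x_{n-3}\le 1$, so applying (A) at $n-1$ gives $s_3(\mathbf{x})\le\bigl(2(n-1)-2\bigr)+1=2n-3$. For (A), if $\delta\le 2$ then (A) at $n-1$ already gives $s_3(\mathbf{x})\le 2n-2$; and if $\delta=3$ then $\mathbf{x}$ ends in $001$, so $\mathbf{x}'$ ends in $00$, and (B) at $n-1$ gives $s_3(\mathbf{x}')\le 2(n-1)-3=2n-5$, whence $s_3(\mathbf{x})\le 2n-5+3=2n-2$.

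It remains to supply a base case establishing both (A) and (B). I would take $n=4$: Lemma~\ref{s3x n45}(i) gives (A), and since the sequences there attaining $s_3=6$ are $1101,\,0110,\,1011,\,1001$, none of which ends in $00$, every sequence ending in $00$ has $s_3\le 5=2\cdot 4-3$, which is (B). The induction then runs for all $n\ge 5$, and together with the explicit values for $n=4$ establishes $s_3(\mathbf{x})\le 2n-2$ for all $n\ge 4$.

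The only genuine subtlety is the recognition that statement (A) by itself is not inductive and must be coupled with the end-pattern bound (B); once that refinement is set up, the evaluation of $\delta$ in the two cases is immediate, and no heavier case analysis (for instance, a two-step deletion, whose increment can reach $5$) is needed. One may also note in passing that the bound is sharp, since by Remark~\ref{derivades z} the sequences $\mathbf{z}\in Z$ satisfy $s_3(\mathbf{z})=2n-2$.
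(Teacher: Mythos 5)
Your proof is correct, and it takes a genuinely different route from the paper's. You strengthen the statement to a joint induction on $n$ via deletion of the last coordinate, tracking the increment $\delta$ contributed by the three new boundary entries $x_{n-1}$, $x_{n-2}+x_{n-1}$, $x_{n-3}+x_{n-1}$, and coupling the main bound (A) with the refinement (B) for sequences ending in $00$; the two statements close the induction on each other, and the base case $n=4$ follows from Lemma~\ref{s3x n45}(i) since none of the extremal sequences listed there ends in $00$. The paper instead argues non-inductively for each $n\ge 6$: it partitions the first three rows into size-$3$ Steinhaus blocks $S_i$ (each of weight at most $4$) separated by three-entry triangles $t_i$ (each of weight at most $2$), splits into cases according to $n\bmod 3$ with an irregular initial block, and handles the exceptional configurations $|S_i|=4$, $|t_i|=3$ by showing they force the next block to be light. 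Your argument is shorter and more uniform — it avoids the mod-$3$ case split and the compensation analysis entirely, at the modest cost of having to guess the right boundary strengthening (B); the paper's block decomposition, on the other hand, makes visible exactly which local patterns ($\overline{110},\overline{011},\overline{101}$) achieve equality, which is information the authors reuse in Proposition~\ref{auxWm}. Both proofs rest on the same verified base data for $n=4$.
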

\begin{proof}
  Lemma~\ref{s3x n45} states the result for $n\in\{4,5\}$. Thus, we
  can assume $n\ge 6$.  Consider first the case $n\equiv 0\pmod{3}$, and let
  $\mathbf{x}=(x_0,x_1,\ldots,x_{n-1})\in\mathbb{F}_2^n$.  Consider
  the $q=n/3$ Steinhaus triangles $S_0=T(x_0x_1x_2)$,
  $S_1=T(x_3x_4x_5)$, $\ldots$, $S_{q-1}=T(x_{n-3}x_{n-2}x_{n-1})$.
  For $i\in\{0,\ldots, q-2\}$, between two consecutive triangles $S_i=T(x_{3i}x_{3i+1}x_{3i+2})$
  and $S_{i+1}=T(x_{3i+3}x_{3i+4}x_{3i+5})$ there is a smaller triangle $t_i$  with two rows (not a Steinhaus triangle)
  formed by the entry $x_{3i+2}+x_{3i+3}$ in the first row and
  $x_{3i+1}+x_{3i+3}$ and $x_{3i+2}+x_{3i+4}$ in the second row (see Figure~\ref{fSiti}).
  Let $S_i\cup t_i$ be the sequence formed by all entries of $S_i$ and $t_i$; and analogously
  we define $S_i\cup t_i\cup S_{i+1}$ and $S_i\cup t_i\cup S_{i+1}\cup t_{i+1}$.

\begin{figure}[htb]
\centering
\setlength{\unitlength}{1mm}
\begin{picture}(154,30)
\put(0,25){$x_{3i}$} \put(28,25){$x_{3i+1}$} \put(56,25){$x_{3i+2}$}
\put(84,25){$x_{3i+3}$} \put(112,25){$x_{3i+4}$} \put(140,25){$x_{3i+5}$}
\put(7,20){$x_{3i}+x_{3i+1}$} \put(35,20){$x_{3i+1}+x_{3i+2}$}\put(63,20){$x_{3i+2}+x_{3i+3}$}
\put(91,20){$x_{3i+3}+x_{3i+4}$} \put(119,20){$x_{3i+4}+x_{3i+5}$}
\put(23,13){$x_{3i}+x_{3i+2}$}\put(51,13){$x_{3i+1}+x_{3i+3}$}\put(79,13){$x_{3i+2}+x_{3i+4}$}\put(107,13){$x_{3i+3}+x_{3i+5}$}
\put(29,5){$S_i$}\put(72,5){$t_i$}\put(116,5){$S_{i+1}$}
\put(-2,27){\line(0,-1){4}}
\put(-2,23){\line(1,0){8}}
\put(6,23){\line(0,-1){7}}
\put(6,16){\line(1,0){15}}
\put(21,16){\line(0,-1){6}}
\put(21,10){\line(1,0){23}}
\put(44,10){\line(0,1){6}}
\put(44,16){\line(1,0){16}}
\put(60,16){\line(0,1){7}}
\put(60,23){\line(1,0){7}}
\put(67,23){\line(0,1){4}}

\put(82,27){\line(0,-1){4}}
\put(82,23){\line(1,0){8}}
\put(90,23){\line(0,-1){7}}
\put(90,16){\line(1,0){15}}
\put(106,16){\line(0,-1){6}}
\put(106,10){\line(1,0){25}}
\put(131,10){\line(0,1){6}}
\put(131,16){\line(1,0){13}}
\put(144,16){\line(0,1){7}}
\put(144,23){\line(1,0){7}}
\put(151,23){\line(0,1){4}}
\end{picture}
\vspace{-5truemm}
\caption{Triangles $S_i$, $t_i$ and $S_{i+1}$.}
\label{fSiti}
\end{figure}
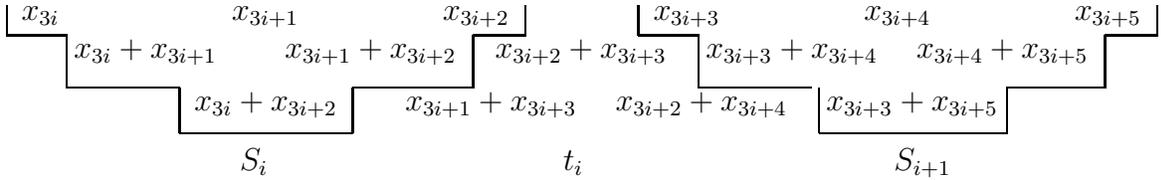

For $i\in\{0,\ldots,q-1\}$,  we have $|S_i|\le 4$, and $|S_i|=4$ if and only if
  $x_ix_{i+1}x_{i+2}\in Z^{(3)}=\{110,\, 011,\, 101\}$.  If $i\le q-2$ and $|S_i|<4$, then
  $|S_i\cup t_i|\le 6$. If $|S_i|=4$ and $|t_i|\le 2$, then $|S_i\cup
  t_i|\le 6$, too.  Consider the case $|S_i|=4$ and $|t_i|=3$. The
  unique sequence in $Z^{(3)}$ that can produce a triangle $t_i$ with $|t_i|=3$
  is $011$; that is $S_i=T(011)$. Then, we have $x_{3i+3}=0$,
  $x_{3i+3}+x_{3i+4}=0$, and $x_{3i+4}=0$.  Hence, $|S_{i+1}|\le
  3$. Then $|S_i\cup t_i\cup S_{i+1}|\le 4+3+3=10$. If, moreover,
  $|t_{i+1}|=3$, then $|S_{i+1}|=0$ and, as a consequence, $|S_i\cup t_i|\le 7$,
  $|S_{i+1}\cup t_{i+1}|=3$ and $|S_i\cup t_i \cup S_{i+1}\cup
  t_{i+1}|\le 10<12$.  Therefore, an upper bound of
  $s_3(\mathbf{x})$ is
  obtained by counting as if each of the $q=n/3$ triangles $S_i$ were of
  weight $4$ and each of the $q-1=(n-3)/3$ triangles $t_i$ were of
  weight $2$; that is,
  $s_3(x)\le 4n/3+2(n-3)/3=2n-2$.

  Consider now the case $n\equiv 1\pmod{3}$. We follow a similar
  notation as above, but taking as $S_0$ the first three rows of
  $T(x_0x_1x_2x_3)$. We first study the concrete case $n=7$; we have
  $2n-2=12$.  As before, let $t_0=(x_3+x_4, x_2+x_4, x_3+x_5)$ and
  $S_1=T(x_4x_5x_6)$.  If $|S_0|\le 5$, then $s_3(\mathbf{x})=|S_0\cup
  t_0\cup S_1|\le 5+3+4=12$ and the result holds. So we can assume
  that $|S_0|$ has the maximum value $|S_0|=6$. If $|t_0|\le 2$, then
  $s_3(\mathbf{x})=|S_0\cup t_0\cup S_1|\le 6+2+4=12$, and we are
  done. Only the case $|S_0|=6$ and $|t_0|=3$ remains. These
  conditions imply that $x_0x_1x_2x_3=1011$, in particular
  $x_3=1$. Since $|t_0|=3$, we have $x_4=x_5=0$ and $x_4+x_5=0$. Then
  $|S_2|\le 3$, and we conclude $s_3(\mathbf{x})=|S_0\cup t_0\cup
  S_1|\le 6+3+3=12$. Thus, the result is proved for $n=7$.

Assume now $n\equiv 1\pmod{3}$ and $n\ge 10$. Let $\mathbf{x}'$ be the
sequence obtained from $\mathbf{x}$ by removing the first four
coordinates and $\mathbf{x}''$ the sequence obtained from $\mathbf{x}$ by removing the first seven coordinates.
 The lengths of $\mathbf{x}'$ and $\mathbf{x}''$ are multiples of $3$, so
$s_3(\mathbf{x}')\le 2(n-4)-2=2n-10$ and $s_3(\mathbf{x}'')\le 2(n-7)-2=2n-16$.

If $|t_0|\le 2$, then $s_3(\mathbf{x})=|S_0\cup t_0|+s_3(\mathbf{x}')\le 6+2+2n-10=2n-2$.
If $|t_0|=3$ and $|t_1|\le 2$, the condition $|t_0|=3$ implies that $|S_1|\le 3$ and we have   
$s_3(\mathbf{x})=|S_0\cup t_0\cup S_1\cup t_1|+s_3(\mathbf{x}'')
\le 6+3+3+2+2n-16=2n-2$.
Finally, if $|t_0|=|t_1|=3$, it is easy to check that $|S_1|=0$, and therefore,
$s_3(\mathbf{x})=|S_0\cup t_0\cup S_1\cup t_1|+s_3(\mathbf{x}'')\le 6+3+0+3+2n-16=2n-4<2n-2$.

If $n\equiv 2\pmod{3}$, the argument is similar to the case $n\equiv 1\pmod 3$. The only difference now is that
 $S_0$ is formed by the three first rows of $T(x_0x_1x_2x_3x_4)$ and $\mathbf{x}'$ and $\mathbf{x}''$ are obtained from 
$\mathbf{x}$ by removing its first 5 and 8 coordinates, respectively. The maximum value of $|S_0|$ is now 8, but 
this graeter value is compensated by the lower ones  $s_3(\mathbf{x}')\le 2(n-5)-1=2n-12$ 
and $s_3(\mathbf{x}'')\le 2(n-8)-2=2n-18$. \qed
\end{proof}

For $n=4$, the triangles generated by the sequences $\mathbf{h}_1=1001$, $\mathbf{h}_2=r(\mathbf{h})=1110$ and
$\mathbf{h}_3=\ell(\mathbf{h}_1)=0111$ have weight $|T(\mathbf{h}_i)|=|T(\mathbf{z}_2)|=6$, and they do not belong to $Z$.
Proposition~\ref{auxWm} shows that for $n\ge 5$, the triangles generated by sequences
not in $Z$ have weight smaller than those generated by sequences in $Z$.

\begin{prop}
\label{auxWm}
Let $n\ge 5$ be an integer. If $\mathbf{x}\in\mathbb{F}_2^n\setminus Z$ and
$\mathbf{z}\in Z$, then $|T(\mathbf{x})|<|T(\mathbf{z})|$.
\end{prop}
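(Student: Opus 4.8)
The plan is to induct on $n$ in steps of $3$, decomposing a triangle into its top three rows together with the triangle of size $n-3$ below them. Writing $s_3(\mathbf{x})=|\mathbf{x}|+|\partial\mathbf{x}|+|\partial^2\mathbf{x}|$ as before, we have $|T(\mathbf{x})|=s_3(\mathbf{x})+|T(\partial^3\mathbf{x})|$, and for $\mathbf{z}\in Z$ Remark~\ref{derivades z} gives $\partial^3\mathbf{z}_j^{(n)}=\mathbf{z}_j^{(n-3)}$ and $s_3(\mathbf{z})=2n-2$, whence $|T(\mathbf{z}_j^{(n)})|=(2n-2)+|T(\mathbf{z}_j^{(n-3)})|$. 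Since by Remark~\ref{pesos T(z) bis} the smallest weight among the three sequences of $Z$ is $|T(\mathbf{z}_2)|=\lfloor n(n+1)/3\rfloor$, it suffices to prove the single inequality $|T(\mathbf{x})|<\lfloor n(n+1)/3\rfloor$ for every $\mathbf{x}\in\mathbb{F}_2^n\setminus Z$. One checks the base cases $n\in\{5,6,7\}$ directly (a finite computation), and records the arithmetic identity $\lfloor n(n+1)/3\rfloor=(2n-2)+\lfloor (n-3)(n-2)/3\rfloor$, valid because $n(n+1)-(n-3)(n-2)=6n-6$ is a multiple of $3$.

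For the inductive step take $n\ge 8$ and assume the claim for $n-3$ (which is $\ge 5$). I would split on whether $\partial^3\mathbf{x}$ lies in $Z^{(n-3)}$. If $\partial^3\mathbf{x}\notin Z^{(n-3)}$, the induction hypothesis gives $|T(\partial^3\mathbf{x})|\le\lfloor (n-3)(n-2)/3\rfloor-1$, and combining with $s_3(\mathbf{x})\le 2n-2$ from Proposition~\ref{s3x} yields $|T(\mathbf{x})|\le(2n-2)+\lfloor (n-3)(n-2)/3\rfloor-1=\lfloor n(n+1)/3\rfloor-1$, strictly below the target. This case is immediate.

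The substantive case is $\partial^3\mathbf{x}=\mathbf{z}_j^{(n-3)}$ for some $j$. There are exactly eight sequences with this third derivative (two primitive choices at each of three steps), and by the relations $\partial^3\mathbf{z}_k^{(n)}=\mathbf{z}_k^{(n-3)}$ precisely one of them, namely $\mathbf{z}_j^{(n)}$, belongs to $Z$; the other seven are the sequences $\mathbf{x}\notin Z$ to be controlled. For these, $|T(\mathbf{x})|=s_3(\mathbf{x})+|T(\mathbf{z}_j^{(n-3)})|$, so feeding in the exact values of $|T(\mathbf{z}_j^{(n-3)})|$ from Proposition~\ref{pesos T(z)} turns the desired bound into a sharpening of Proposition~\ref{s3x}: one must show $s_3(\mathbf{x})\le 2n-3$ in general, and $s_3(\mathbf{x})\le 2n-4$ in the special situation $n\equiv 1\pmod 3$ with $j\in\{1,3\}$ (where $|T(\mathbf{z}_j^{(n-3)})|=\lceil (n-3)(n-2)/3\rceil$ already exceeds $\lfloor\cdot\rfloor$ by one and so consumes the available margin). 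Equivalently, among the $\partial^3$-preimages of a $Z$-sequence only the $Z$-preimage attains $s_3=2n-2$, and in the exceptional case no non-$Z$ preimage even reaches $2n-3$.

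Establishing this refined $s_3$-bound on the seven non-$Z$ preimages is the crux and the main obstacle, the more so because sequences outside $Z$ can by themselves attain $s_3=2n-2$ (for instance $110011$ when $n=6$): it is only the extra constraint $\partial^3\mathbf{x}\in Z^{(n-3)}$ that forces the drop. I would carry this out by writing the seven preimages explicitly for each residue of $n\bmod 3$ and each $j$, in the tabular style already used in the paper, recovering $\partial^2\mathbf{x}$ and $\partial\mathbf{x}$ from $\mathbf{z}_j^{(n-3)}$ through the primitive formula and reading off $s_3$; because the $\mathbf{z}_j$ are periodic, the weight bookkeeping propagates along fixed patterns and each entry reduces to a short count. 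The tightest subcase is $n\equiv 1\pmod 3$ with $j\in\{1,3\}$, where one must exclude $s_3=2n-3$ as well as $s_3=2n-2$.
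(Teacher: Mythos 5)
Your proposal follows essentially the same route as the paper: induction in steps of three via $|T(\mathbf{x})|=s_3(\mathbf{x})+|T(\partial^3\mathbf{x})|$, the easy case $\partial^3\mathbf{x}\notin Z^{(n-3)}$ handled by Proposition~\ref{s3x} plus the induction hypothesis, and the remaining case reduced to a sharpened bound on $s_3$ for the $21$ non-$Z$ preimages of the $\mathbf{z}_j^{(n-3)}$ (your residue-by-residue accounting of when $2n-3$ versus $2n-4$ is needed is in fact slightly finer than the paper's uniform claim $s_3(\mathbf{x}_i)<2n-3$). The one step you describe but do not execute --- listing the preimages and bounding their weights --- is exactly what the paper's Tables~\ref{p3zi} and~\ref{upperBounds} supply, including the boundary check that $\mathbf{x}_{14}$ needs a separate verification at $n=8$.
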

\begin{proof}
For $n\in\{5,6,7\}$ the result is a direct verification. Let $n\ge 8$ and assume that the result holds for
lower values of $n$ (and bigger than 4).

If $\partial^3\mathbf{x}\notin Z^{(n-3)}$, by induction hypothesis $|T(\partial^3\mathbf{x})|<|T(\partial^3\mathbf{z})|$,
and by Proposition~\ref{s3x},
$s_3(\mathbf{x})\le 2n-2=s_3(\mathbf{z})$. Then,
$$
|T(\mathbf{x})|=s_3(\mathbf{x})+|T(\partial^3\mathbf{x})|
\le s_3(\mathbf{z})+|T(\partial^3\mathbf{x})|
<s_3(\mathbf{z})+|T(\partial^3\mathbf{z})|
=|T(\mathbf{z})|.
$$
\indent Now assume that
$\partial^3\mathbf{x}=\mathbf{z}_j^{(n-3)}\in Z^{(n-3)}$. There exist
24 sequences $\mathbf{x}$ such that $\partial^3\mathbf{x}\in
Z^{(n-3)}$.  Table~\ref{p3zi} shows the 6 possible values of
$\partial^2\mathbf{x}$, the 12 possible values of
$\partial\mathbf{x}$, and the 24 possible sequences $\mathbf{x}$. However,
three of these 24 sequences belong to $Z$, so we consider only the
remaining 21, which we have numbered
$\mathbf{x}_1,\ldots,\mathbf{x}_{21}$. Now, Table~\ref{upperBounds}
shows, for each $\mathbf{x}_i$, $i\in[21]$, upper bounds of
$|\partial^2\mathbf{x}_i|$, $|\partial\mathbf{x}_i|$, and
$|\mathbf{x}_i|$. We see that, $s_3(\mathbf{x}_i)<2n-3$ for $n\ge 8$
(the inequality for $\mathbf{x}_{14}$ is $(5n-1)/3<2n-3$, which is
satisfied only for $n\ge 9$, but it is routine to check that
$s_3(\mathbf{x}_{14}^{(8)})=12<13=2\cdot 8-3$).  Then,
\begin{align*}
|T(\mathbf{x}_i)|=& s_3(\mathbf{x}_i)+|T(\partial^3\mathbf{x}_i)|\\
               <& 2n-3+|T(\mathbf{z}_j^{(n-3)})| \\
               =& 2n-2+|T(\mathbf{z}_j^{(n-3)})|-1\\
             \le& s_3(\mathbf{z}_2)+|T(\mathbf{z}_2^{(n-3)})|\\
              = &|T(\mathbf{z}_2)|\le |T(\mathbf{z})|,
\end{align*}
for all $\mathbf{z}\in Z$. \qed
\end{proof}

\begin{thm}
\label{Wm}
Let $n\ge 2$ be an integer. The maximum weight of the triangles of $S(n)$ is
$w_m=\lceil n(n+1)/3\rceil$. Moreover,
\vspace{-\parskip}
\begin{enumerate}[\rm(i)]
\item if $n\equiv 0,2\pmod{3}$, then $W_m=\{\mathbf{z_1}, \,\mathbf{z_2},\,\mathbf{z_3}\}$;
\item if $n\equiv 1\pmod{3}$, then $W_m=\{\mathbf{z_1},\,\mathbf{z_3}\}$.
\end{enumerate}
\end{thm}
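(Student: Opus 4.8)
The plan is to combine the two main results established in this section. By Proposition~\ref{auxWm}, for $n\ge 5$ every sequence $\mathbf{x}\in\mathbb{F}_2^n\setminus Z$ satisfies $|T(\mathbf{x})|<|T(\mathbf{z})|$ for \emph{every} $\mathbf{z}\in Z$; in particular $|T(\mathbf{x})|$ is strictly smaller than the minimum of the three weights $|T(\mathbf{z}_i)|$. Consequently the maximum weight over all of $\mathbb{F}_2^n$ is attained only inside $Z$, it equals $\max_{i\in[3]}|T(\mathbf{z}_i)|$, and $W_m$ is exactly the set of those $\mathbf{z}_i$ realizing this maximum. Thus, for $n\ge 5$, the entire statement reduces to reading off the weights $|T(\mathbf{z}_i)|$ already computed in Proposition~\ref{pesos T(z)}.

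Using the reformulation in Remark~\ref{pesos T(z) bis}, namely $|T(\mathbf{z}_1)|=|T(\mathbf{z}_3)|=\lceil n(n+1)/3\rceil$ and $|T(\mathbf{z}_2)|=\lfloor n(n+1)/3\rfloor$, I would read off immediately that the maximum is $\lceil n(n+1)/3\rceil$, so $w_m=\lceil n(n+1)/3\rceil$ as claimed. It then remains to decide which $\mathbf{z}_i$ attain it, and this is governed by the divisibility of $n(n+1)$ by $3$. When $n\equiv 0,2\pmod 3$ we have $3\mid n(n+1)$, so $\lfloor n(n+1)/3\rfloor=\lceil n(n+1)/3\rceil$, all three sequences attain the maximum, and $W_m=Z=\{\mathbf{z}_1,\mathbf{z}_2,\mathbf{z}_3\}$. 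When $n\equiv 1\pmod 3$ we have $n(n+1)\equiv 2\pmod 3$, so $\lfloor n(n+1)/3\rfloor<\lceil n(n+1)/3\rceil$; here $\mathbf{z}_1$ and $\mathbf{z}_3$ attain the maximum while $\mathbf{z}_2$ falls one unit short, giving $W_m=\{\mathbf{z}_1,\mathbf{z}_3\}$.

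Since Proposition~\ref{auxWm} requires $n\ge 5$, the remaining cases $n\in\{2,3,4\}$ must be treated directly from Proposition~\ref{casos n<=4}. For $n=2$ (residue $2$) one identifies $Z=\{11,01,10\}=W_1$; for $n=3$ (residue $0$) one identifies $Z=\{110,011,101\}=W_2$; and for $n=4$ (residue $1$) one has $\mathbf{z}_1=1101$, $\mathbf{z}_3=1011$, and $W_4=\{1101,1011\}=\{\mathbf{z}_1,\mathbf{z}_3\}$, consistent with part~(ii). At this stage there is essentially no remaining obstacle: the genuine difficulty of the section is absorbed in Propositions~\ref{s3x} and~\ref{auxWm}, and the only points requiring care are the $n\equiv 1\pmod 3$ case, where $\mathbf{z}_2$ must be excluded from $W_m$, together with $n=4$, where the sequences $\mathbf{h}_i\notin Z$ match the weight of $\mathbf{z}_2$ but still fall short of the maximum $7$.
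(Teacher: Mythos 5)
Your proposal is correct and follows essentially the same route as the paper: Proposition~\ref{auxWm} confines the maximum to $Z$ for $n\ge 5$, the weights from Proposition~\ref{pesos T(z)} (equivalently Remark~\ref{pesos T(z) bis}) then determine $w_m$ and which $\mathbf{z}_i$ attain it according to $n \bmod 3$, and the cases $n\in\{2,3,4\}$ are read off from Proposition~\ref{casos n<=4}. No gaps.
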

\begin{proof}
According to Proposition~\ref{casos n<=4}, the result is true for $n\in\{2,3,4\}$. Let $n\ge 5$.
From Proposition~\ref{auxWm}, the triangles of maximum weight are triangles in the set
$\{T(\mathbf{z}_1),\, T(\mathbf{z}_2),\, T(\mathbf{z}_3)\}$.
Proposition~\ref{pesos T(z)}, gives the weight of triangles in this set:

If $n\equiv 0,2 \pmod{3}$, then
$$
|T(\mathbf{z}_1)|=|T(\mathbf{z}_2)|=|T(\mathbf{z}_3)|=n(n+1)/3=\lceil n(n+1)/3\rceil.
$$
If $n\equiv 1\pmod{3}$, then
$$
|T(\mathbf{z}_1)|=|T(\mathbf{z}_3)|=(n-1)(n+2)/3+1=n(n+1)/3+1/3=\lceil n(n+1)/3\rceil.
$$
and
$$
|T(\mathbf{z}_2)|=(n-1)(n+2)/3=\lceil n(n+1)/3\rceil-1.
$$
Thus, triangles $T(\mathbf{z}_1)$,  $T(\mathbf{z}_2)$ and $T(\mathbf{z}_3)$
give maximum weight $\lceil n(n+1)/3\rceil$, except if $n\equiv 1\pmod{3}$; in this case, only
$T(\mathbf{z}_1)$ and $T(\mathbf{z}_3)$, give the maximum weight because $T(\mathbf{z}_2)$ has weight
one unity less. \qed
\end{proof}

\section{$m-1$ sequences}

For  $4\le n\le 9$, a (computer-aided) exhaustive search gives the values of $w_{m-1}$ and the cardinality of $W_{m-1}$
in the Table~\ref{tmmu}.

\begin{d2}[htb]
$$
\begin{array}{|r|l|l|l|l|l|l|l|l|}
\hline
n         & 4 & 5 & 6 & 7 & 8 & 9 \\
\hline
m        & 4 & 6 & 5 & 12 & 13 & 17 \\
w_{m-1}   & 6 & 9 & 12 & 18 & 22 & 27\\
|W_{m-1}| & 4 & 7 & 30 & 1  & 9 &  22\\
\hline
\end{array}
$$
\caption{Values of $w_{m-1}$ and cardinality of $W_{m-1}$ for  $4\le n\le 9$.}
\label{tmmu}
\end{d2}

Moreover, the corresponding sequences in $W_{m-1}$ are given in Table~\ref{tWmmu}. We see that, for $n=4,5,6,7,8$ and $9$,
the number of non-equivalent sequences in $W_{m-1}$ is $2,2,7,1,2$ and $5$, respectively.

The case $n=7$ can be derived from the results in the previous section, as well as all cases when $n\equiv 1\pmod{3}$,
as shown in the following Theorem.

\begin{thm}
\label{Wmmu1}
Let $n\ge 7$ and $n\equiv 1\pmod{3}$. Then $w_{m-1}=\lceil n(n+1)/3\rceil-1=(n^2+n-2)/3$ and $W_{m-1}=\{\mathbf{z}_2\}$.
\end{thm}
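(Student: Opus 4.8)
The plan is to deduce everything directly from the results already established for $W_m$ and the set $Z$, since for $n\equiv 1\pmod 3$ the set $Z$ already splits into a top weight attained by $\mathbf{z}_1,\mathbf{z}_3$ and a strictly smaller weight attained by $\mathbf{z}_2$. First I would recall from Theorem~\ref{Wm} that $w_m=\lceil n(n+1)/3\rceil$ and $W_m=\{\mathbf{z}_1,\mathbf{z}_3\}$, so in particular $\mathbf{z}_2\notin W_m$. By Proposition~\ref{pesos T(z)}(ii) (equivalently Remark~\ref{pesos T(z) bis}) we have $|T(\mathbf{z}_2)|=(n-1)(n+2)/3=\lfloor n(n+1)/3\rfloor$; and since $n\equiv 1\pmod 3$ gives $n(n+1)\equiv 2\pmod 3$, this equals $(n^2+n-2)/3=\lceil n(n+1)/3\rceil-1=w_m-1$.

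The key step is to rule out any sequence whose triangle has weight strictly between $w_m-1$ and $w_m$, as well as any sequence other than $\mathbf{z}_2$ attaining weight exactly $w_m-1$. Both follow at once from Proposition~\ref{auxWm}: for every $\mathbf{x}\in\mathbb{F}_2^n\setminus Z$ and every $\mathbf{z}\in Z$ one has $|T(\mathbf{x})|<|T(\mathbf{z})|$; taking $\mathbf{z}=\mathbf{z}_2$ yields $|T(\mathbf{x})|<|T(\mathbf{z}_2)|=w_m-1$ for all $\mathbf{x}\notin Z$.

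Putting these together, the triangles of weight $>w_m-1$ are exactly $T(\mathbf{z}_1)$ and $T(\mathbf{z}_3)$ (both of weight $w_m$), while $\mathbf{z}_2$ is the unique sequence generating a triangle of weight exactly $w_m-1$ and every remaining sequence generates a triangle of weight $<w_m-1$. Hence the second largest weight is $w_{m-1}=w_m-1=(n^2+n-2)/3$, attained only by $\mathbf{z}_2$, so $W_{m-1}=\{\mathbf{z}_2\}$. The argument is uniform in $n$ (the table entry for $n=7$, where $w_{m-1}=18=(49+7-2)/3$ and $|W_{m-1}|=1$, serves only as a consistency check), and there is no real obstacle beyond bookkeeping the floor/ceiling identities; the only point requiring attention is the \emph{strictness} of the inequality in Proposition~\ref{auxWm}, which is precisely what guarantees that nothing lies in the one-unit gap below $w_m$ except $\mathbf{z}_2$.
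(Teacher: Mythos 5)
Your proposal is correct and follows essentially the same route as the paper: both rest on Proposition~\ref{auxWm} (applied with $\mathbf{z}=\mathbf{z}_2$ to exclude everything outside $Z$ from the gap at and below $w_m-1$), Theorem~\ref{Wm} to identify $W_m=\{\mathbf{z}_1,\mathbf{z}_3\}$, and Proposition~\ref{pesos T(z)} to evaluate $|T(\mathbf{z}_2)|=\lceil n(n+1)/3\rceil-1$. Your write-up merely spells out the strictness argument that the paper's proof leaves implicit.
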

\begin{proof}
By Proposition~\ref{auxWm}, if $n\ge 5$ and $\mathbf{z}\in Z$ and $\mathbf{x}\in\mathbb{F}_2^n\setminus Z$, then
$|T(\mathbf{x}|<|T(\mathbf{z})|$. By Theorem~\ref{Wm}, $W_m=\{\mathbf{z}_1,\,\mathbf{z}_3\}$. Hence,
$W_{m-1}=\{\mathbf{z}_2\}$. Moreover, $w_{m-1}=|T(\mathbf{z}_2)|=\lceil n(n+1)/3\rceil-1=(n^2+n-2)/3$.\qed
\end{proof}

For $n\ge 11$, we define the following sequences. If
$n\equiv 2\pmod{3}$, we consider the sequences equivalent to $\mathbf{v}_1=\overline{100}[n]$ (see Figure~\ref{fvi}):
$$
\begin{array}{lll}
\mathbf{v}_1=\overline{100}[n], &
\mathbf{v}_2=r(\mathbf{v}_1)=0\cdot\overline{101}[n-1], &
\mathbf{v}_3=\ell(\mathbf{v}_1)=\overline{101}[n-1]\cdot 1,  \\
\mathbf{v}_4=i(\mathbf{v}_1)=\overline{010}[n], &
\mathbf{v}_5=r(\mathbf{v}_4)=1\cdot\overline{110}[n-1], &
\mathbf{v}_6=\ell(\mathbf{v}_4)=\overline{110}[n-1]\cdot 0.
\end{array}
$$

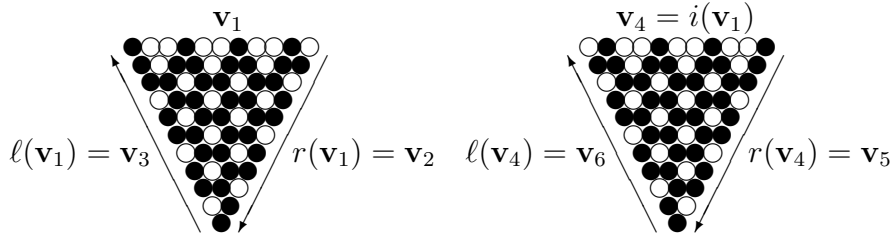
\begin{figure}[htb]
\centering
\setlength{\unitlength}{2.35mm}
\hspace{5mm}
\begin{picture}(14,11)
\y{0.5}{10.5} \n{1.5}{10.5} \n{2.5}{10.5} \y{3.5}{10.5} \n{4.5}{10.5} \n{5.5}{10.5} \y{6.5}{10.5} \n{7.5}{10.5} \n{8.5}{10.5} \y{9.5}{10.5} \n{10.5}{10.5}
\y{1.0}{9.5} \n{2.0}{9.5} \y{3.0}{9.5} \y{4.0}{9.5} \n{5.0}{9.5} \y{6.0}{9.5} \y{7.0}{9.5} \n{8.0}{9.5} \y{9.0}{9.5} \y{10.0}{9.5}
\y{1.5}{8.5} \y{2.5}{8.5} \n{3.5}{8.5} \y{4.5}{8.5} \y{5.5}{8.5} \n{6.5}{8.5} \y{7.5}{8.5} \y{8.5}{8.5} \n{9.5}{8.5}
\n{2.0}{7.5} \y{3.0}{7.5} \y{4.0}{7.5} \n{5.0}{7.5} \y{6.0}{7.5} \y{7.0}{7.5} \n{8.0}{7.5} \y{9.0}{7.5}
\y{2.5}{6.5} \n{3.5}{6.5} \y{4.5}{6.5} \y{5.5}{6.5} \n{6.5}{6.5} \y{7.5}{6.5} \y{8.5}{6.5}
\y{3.0}{5.5} \y{4.0}{5.5} \n{5.0}{5.5} \y{6.0}{5.5} \y{7.0}{5.5} \n{8.0}{5.5}
\n{3.5}{4.5} \y{4.5}{4.5} \y{5.5}{4.5} \n{6.5}{4.5} \y{7.5}{4.5}
\y{4.0}{3.5} \n{5.0}{3.5} \y{6.0}{3.5} \y{7.0}{3.5}
\y{4.5}{2.5} \y{5.5}{2.5} \n{6.5}{2.5}
\n{5.0}{1.5} \y{6.0}{1.5}
\y{5.5}{0.5}
\put(11.5,10.0){\vector(-1,-2){5}}
\put(4.3,0){\vector(-1,2){5}}
\put(5.0,11.7){$\mathbf{v}_1$}
\put(9.5,4.0){$r(\mathbf{v}_1)=\mathbf{v}_2$}
\put(-6.5,4.0){$\ell(\mathbf{v}_1)=\mathbf{v}_3$}
\end{picture}
\hspace{2.5truecm}
\begin{picture}(14,12)
\n{0.5}{10.5} \y{1.5}{10.5} \n{2.5}{10.5} \n{3.5}{10.5} \y{4.5}{10.5} \n{5.5}{10.5} \n{6.5}{10.5} \y{7.5}{10.5} \n{8.5}{10.5}
\n{9.5}{10.5} \y{10.5}{10.5}
\y{1.0}{9.5} \y{2.0}{9.5} \n{3.0}{9.5} \y{4.0}{9.5} \y{5.0}{9.5} \n{6.0}{9.5} \y{7.0}{9.5} \y{8.0}{9.5} \n{9.0}{9.5} \y{10.0}{9.5}
\n{1.5}{8.5} \y{2.5}{8.5} \y{3.5}{8.5} \n{4.5}{8.5} \y{5.5}{8.5} \y{6.5}{8.5} \n{7.5}{8.5} \y{8.5}{8.5} \y{9.5}{8.5}
\y{2.0}{7.5} \n{3.0}{7.5} \y{4.0}{7.5} \y{5.0}{7.5} \n{6.0}{7.5} \y{7.0}{7.5} \y{8.0}{7.5} \n{9.0}{7.5}
\y{2.5}{6.5} \y{3.5}{6.5} \n{4.5}{6.5} \y{5.5}{6.5} \y{6.5}{6.5} \n{7.5}{6.5} \y{8.5}{6.5}
\n{3.0}{5.5} \y{4.0}{5.5} \y{5.0}{5.5} \n{6.0}{5.5} \y{7.0}{5.5} \y{8.0}{5.5}
\y{3.5}{4.5} \n{4.5}{4.5} \y{5.5}{4.5} \y{6.5}{4.5} \n{7.5}{4.5}
\y{4.0}{3.5} \y{5.0}{3.5} \n{6.0}{3.5} \y{7.0}{3.5}
\n{4.5}{2.5} \y{5.5}{2.5} \y{6.5}{2.5}
\y{5.0}{1.5} \n{6.0}{1.5}
\y{5.5}{0.5}
\put(11.5,10.0){\vector(-1,-2){5}}
\put(4.3,0){\vector(-1,2){5}}
\put(2.0,11.7){$\mathbf{v}_4=i(\mathbf{v}_1)$}
\put(9.5,4.0){$r(\mathbf{v}_4)=\mathbf{v}_5$}
\put(-6.5,4.0){$\ell(\mathbf{v}_4)=\mathbf{v}_6$}
\end{picture}
\caption{The sequences $\mathbf{v}_i$ for $n=11$.}
\label{fvi}
\end{figure}

For $n\equiv 0\pmod{3}$, we consider $\mathbf{u}_1=\overline{100}[n]$ and  $\mathbf{u}_7=\overline{010}[n]$.
As $i(\mathbf{u}_7)=\mathbf{u}_7$, the equivalence class of $\mathbf{u}_7$ has only $3$ sequences. We define
$$
\begin{array}{lll}
\mathbf{u}_1=\overline{100}[n],           &
\mathbf{u}_2=r(\mathbf{u}_1)=0\cdot\overline{011}[n-1],   &
\mathbf{u}_3=\ell(\mathbf{u}_1)=\overline{110}[n-1]\cdot 1, \\
\mathbf{u}_4=i(\mathbf{u}_1)=\overline{001}[n],           &
\mathbf{u}_5=r(\mathbf{u}_4)=1\cdot\overline{110}[n-1],   &
\mathbf{u}_6=\ell(\mathbf{u}_4)=\overline{101}[n-1]\cdot 0, \\
\mathbf{u}_7=\overline{010}[n],           &
\mathbf{u}_8=r(\mathbf{u}_7)=0\cdot\overline{101}[n-1],   &
\mathbf{u}_9=\ell(\mathbf{u}_7)=\overline{011}[n-1]\cdot 0.
\end{array}
$$

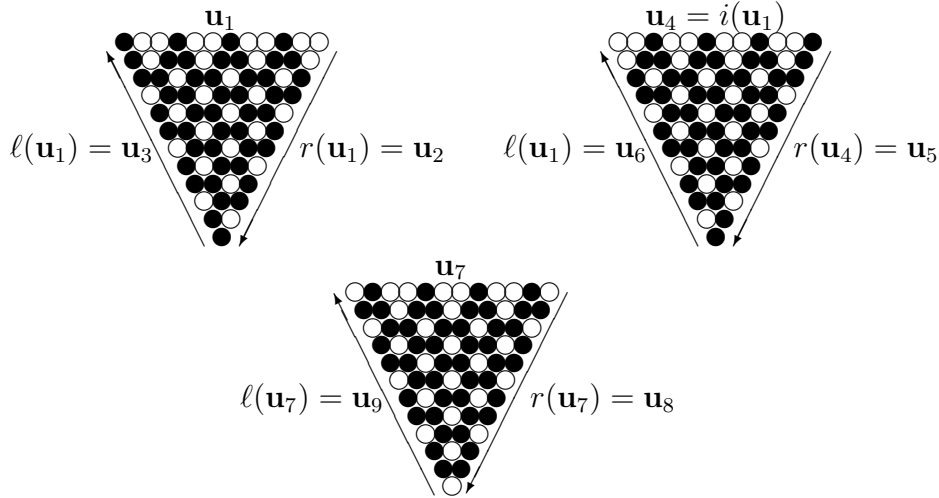
\begin{figure}[htb]
\centering
\setlength{\unitlength}{2.35mm}
\hspace{3mm}
\begin{picture}(14,14)
\y{0.5}{11.5} \n{1.5}{11.5} \n{2.5}{11.5} \y{3.5}{11.5} \n{4.5}{11.5} \n{5.5}{11.5} \y{6.5}{11.5} \n{7.5}{11.5} \n{8.5}{11.5} \y{9.5}{11.5} \n{10.5}{11.5} \n{11.5}{11.5}
\y{1.0}{10.5} \n{2.0}{10.5} \y{3.0}{10.5} \y{4.0}{10.5} \n{5.0}{10.5} \y{6.0}{10.5} \y{7.0}{10.5} \n{8.0}{10.5} \y{9.0}{10.5} \y{10.0}{10.5} \n{11.0}{10.5}
\y{1.5}{9.5} \y{2.5}{9.5} \n{3.5}{9.5} \y{4.5}{9.5} \y{5.5}{9.5} \n{6.5}{9.5} \y{7.5}{9.5} \y{8.5}{9.5} \n{9.5}{9.5} \y{10.5}{9.5}
\n{2.0}{8.5} \y{3.0}{8.5} \y{4.0}{8.5} \n{5.0}{8.5} \y{6.0}{8.5} \y{7.0}{8.5} \n{8.0}{8.5} \y{9.0}{8.5} \y{10.0}{8.5}
\y{2.5}{7.5} \n{3.5}{7.5} \y{4.5}{7.5} \y{5.5}{7.5} \n{6.5}{7.5} \y{7.5}{7.5} \y{8.5}{7.5} \n{9.5}{7.5}
\y{3.0}{6.5} \y{4.0}{6.5} \n{5.0}{6.5} \y{6.0}{6.5} \y{7.0}{6.5} \n{8.0}{6.5} \y{9.0}{6.5}
\n{3.5}{5.5} \y{4.5}{5.5} \y{5.5}{5.5} \n{6.5}{5.5} \y{7.5}{5.5} \y{8.5}{5.5}
\y{4.0}{4.5} \n{5.0}{4.5} \y{6.0}{4.5} \y{7.0}{4.5} \n{8.0}{4.5}
\y{4.5}{3.5} \y{5.5}{3.5} \n{6.5}{3.5} \y{7.5}{3.5}
\n{5.0}{2.5} \y{6.0}{2.5} \y{7.0}{2.5}
\y{5.5}{1.5} \n{6.5}{1.5}
\y{6.0}{0.5}
\put(12.5,11.0){\vector(-1,-2){5.5}}
\put(5.0,0){\vector(-1,2){5.5}}
\put(5.0,12.5){$\mathbf{u}_1$}
\put(10.4,5.0){$r(\mathbf{u}_1)=\mathbf{u}_2$}
\put(-6.0,5.0){$\ell(\mathbf{u}_1)=\mathbf{u}_3$}
\end{picture}
\hspace{3truecm}
\begin{picture}(14,14)
\n{0.5}{11.5} \n{1.5}{11.5} \y{2.5}{11.5} \n{3.5}{11.5} \n{4.5}{11.5} \y{5.5}{11.5} \n{6.5}{11.5} \n{7.5}{11.5} \y{8.5}{11.5} \n{9.5}{11.5} \n{10.5}{11.5} \y{11.5}{11.5}
\n{1.0}{10.5} \y{2.0}{10.5} \y{3.0}{10.5} \n{4.0}{10.5} \y{5.0}{10.5} \y{6.0}{10.5} \n{7.0}{10.5} \y{8.0}{10.5} \y{9.0}{10.5} \n{10.0}{10.5} \y{11.0}{10.5}
\y{1.5}{9.5} \n{2.5}{9.5} \y{3.5}{9.5} \y{4.5}{9.5} \n{5.5}{9.5} \y{6.5}{9.5} \y{7.5}{9.5} \n{8.5}{9.5} \y{9.5}{9.5} \y{10.5}{9.5}
\y{2.0}{8.5} \y{3.0}{8.5} \n{4.0}{8.5} \y{5.0}{8.5} \y{6.0}{8.5} \n{7.0}{8.5} \y{8.0}{8.5} \y{9.0}{8.5} \n{10.0}{8.5}
\n{2.5}{7.5} \y{3.5}{7.5} \y{4.5}{7.5} \n{5.5}{7.5} \y{6.5}{7.5} \y{7.5}{7.5} \n{8.5}{7.5} \y{9.5}{7.5}
\y{3.0}{6.5} \n{4.0}{6.5} \y{5.0}{6.5} \y{6.0}{6.5} \n{7.0}{6.5} \y{8.0}{6.5} \y{9.0}{6.5}
\y{3.5}{5.5} \y{4.5}{5.5} \n{5.5}{5.5} \y{6.5}{5.5} \y{7.5}{5.5} \n{8.5}{5.5}
\n{4.0}{4.5} \y{5.0}{4.5} \y{6.0}{4.5} \n{7.0}{4.5} \y{8.0}{4.5}
\y{4.5}{3.5} \n{5.5}{3.5} \y{6.5}{3.5} \y{7.5}{3.5}
\y{5.0}{2.5} \y{6.0}{2.5} \n{7.0}{2.5}
\n{5.5}{1.5} \y{6.5}{1.5}
\y{6.0}{0.5}
\put(12.5,11.0){\vector(-1,-2){5.5}}
\put(5.0,0){\vector(-1,2){5.5}}
\put(2.0,12.5){$\mathbf{u}_4=i(\mathbf{u}_1)$}
\put(10.4,5.0){$r(\mathbf{u}_4)=\mathbf{u}_5$}
\put(-6.0,5.0){$\ell(\mathbf{u}_1)=\mathbf{u}_6$}
\end{picture}
\\
\begin{picture}(14,14)
\n{0.5}{11.5} \y{1.5}{11.5} \n{2.5}{11.5} \n{3.5}{11.5} \y{4.5}{11.5} \n{5.5}{11.5} \n{6.5}{11.5} \y{7.5}{11.5} \n{8.5}{11.5} \n{9.5}{11.5} \y{10.5}{11.5} \n{11.5}{11.5}
\y{1.0}{10.5} \y{2.0}{10.5} \n{3.0}{10.5} \y{4.0}{10.5} \y{5.0}{10.5} \n{6.0}{10.5} \y{7.0}{10.5} \y{8.0}{10.5} \n{9.0}{10.5} \y{10.0}{10.5} \y{11.0}{10.5}
\n{1.5}{9.5} \y{2.5}{9.5} \y{3.5}{9.5} \n{4.5}{9.5} \y{5.5}{9.5} \y{6.5}{9.5} \n{7.5}{9.5} \y{8.5}{9.5} \y{9.5}{9.5} \n{10.5}{9.5}
\y{2.0}{8.5} \n{3.0}{8.5} \y{4.0}{8.5} \y{5.0}{8.5} \n{6.0}{8.5} \y{7.0}{8.5} \y{8.0}{8.5} \n{9.0}{8.5} \y{10.0}{8.5}
\y{2.5}{7.5} \y{3.5}{7.5} \n{4.5}{7.5} \y{5.5}{7.5} \y{6.5}{7.5} \n{7.5}{7.5} \y{8.5}{7.5} \y{9.5}{7.5}
\n{3.0}{6.5} \y{4.0}{6.5} \y{5.0}{6.5} \n{6.0}{6.5} \y{7.0}{6.5} \y{8.0}{6.5} \n{9.0}{6.5}
\y{3.5}{5.5} \n{4.5}{5.5} \y{5.5}{5.5} \y{6.5}{5.5} \n{7.5}{5.5} \y{8.5}{5.5}
\y{4.0}{4.5} \y{5.0}{4.5} \n{6.0}{4.5} \y{7.0}{4.5} \y{8.0}{4.5}
\n{4.5}{3.5} \y{5.5}{3.5} \y{6.5}{3.5} \n{7.5}{3.5}
\y{5.0}{2.5} \n{6.0}{2.5} \y{7.0}{2.5}
\y{5.5}{1.5} \y{6.5}{1.5}
\n{6.0}{0.5}
\put(12.5,11.5){\vector(-1,-2){5.7}}
\put(5.0,0){\vector(-1,2){5.7}}
\put(5.0,12.5){$\mathbf{u}_7$}
\put(10.4,5.0){$r(\mathbf{u}_7)=\mathbf{u}_8$}
\put(-6.0,5.0){$\ell(\mathbf{u}_7)=\mathbf{u}_9$}
\end{picture}
\caption{The sequences $\mathbf{u}_i$ for $n=12$.}
\label{fui}
\end{figure}

\begin{remark}\normalfont
\label{rem casos 11 i 12}
If $n\equiv 2\pmod{3}$, the primitives of $\mathbf{z}_3^{(n-1)}$ are $\mathbf{z}_2$ and
$\mathbf{1}+\mathbf{z}_2=\overline{100}[n]=\mathbf{v}_1$, and the
primitives of $\mathbf{z}_1^{(n-1)}$ are $\mathbf{z}_3$ and
$\mathbf{1}+\mathbf{z}_3=\overline{010}[n]=\mathbf{v}_4$. We see that
$V=V^{(n)}=\{\mathbf{v}_i: i\in[9]\}$ is the union of the equivalence classes of the sequences
$\mathbf{x}\notin W_m$ such that $\partial\mathbf{x}\in W_m^{(n-1)}$.

In $n\equiv 0\pmod{3}$, the primitives of $\mathbf{z}_1^{(n-1)}$, $\mathbf{z}_2^{(n-1)}$ and $\mathbf{z}_3^{(n-1)}$ which are not in
$W_m$ are $\mathbf{1}+\mathbf{z}_3=\overline{010}[n]=\mathbf{u}_7$,
$\mathbf{1}+\mathbf{z}_1=\overline{001}[n]=\mathbf{u}_4$ and $\mathbf{1}+\mathbf{z}_2=\overline{100}[n]=\mathbf{u}_1$, 
respectively. We see that $U=U^{(n)}=\{\mathbf{u}_i:i\in[9]\}$ is the union of the equivalence classes of the sequences
$\mathbf{x}\notin W_m$ such that $\partial\mathbf{x}\in W_m^{(n-1)}$.
\end{remark}

\begin{remark}\normalfont
\label{pesos u iv}
The weight of the sequences $\mathbf{u}_i$ and $\mathbf{v}_i$ are the following.
$$
\begin{array}{lll}
|\mathbf{u}_1|=n/3,     & |\mathbf{u}_2|=(2n-3)/3, & |\mathbf{u}_3|=(2n+3)/3, \\
|\mathbf{u}_4|=n/3,     & |\mathbf{u}_5|=(2n+3)/3, & |\mathbf{u}_6|=(2n-3)/3, \\
|\mathbf{u}_7|=n/3,     & |\mathbf{u}_8|=(2n-3)/3, &|\mathbf{u}_9|=(2n-3)/3.  \\[4pt]
|\mathbf{v}_1|=(n+1)/3, & |\mathbf{v}_2|=(2n-1)/3, & |\mathbf{v}_3|=(2n+2)/3, \\
|\mathbf{v}_4|=(n+1)/3, &|\mathbf{v}_5|=(2n+2)/3,  & |\mathbf{u}_6|=(2n-1)/3.
\end{array}
$$
\end{remark}

Table~\ref{tWmmu casos 11 i 12} gives the values of $w_{m-1}$ and the set $W_{m-1}$ for $n\in\{11,12\}$.

\begin{d2}[H]
$$
\begin{array}{|r|r|l|}
\hline
n & w_{m-1} & W_{m-1} \\
\hline
11 & 41 & \{\mathbf{v}_1,\,
         \mathbf{v}_2,\,
         \mathbf{v}_3,\,
         \mathbf{v}_4,\,
         \mathbf{v}_5,\,
         \mathbf{v}_6\} \\
\hline
12 & 48 & \{\mathbf{u}_1,\,
         \mathbf{u}_2,\,
         \mathbf{u}_3,\,
        \mathbf{u}_4,\,
         \mathbf{u}_5,\,
         \mathbf{u}_6,\, \\
   &    & \phantom{\{}\mathbf{u}_7,\,
         \mathbf{u}_8,\,
         \mathbf{u}_9\}\\
\hline
\end{array}
$$
\caption{Values $w_{m-1}$ and sets $W_{m-1}$ for $n\in\{11,12\}$.}
\label{tWmmu casos 11 i 12}
\end{d2}

Let $Y=U$ if $n\equiv 0\pmod{3}$, and $Y=V$ if $n\equiv 2\pmod{3}$. 
\begin{prop}
\label{Wmmu}
Let $n\ge 11$ be an integer and $n\equiv 0,2\pmod{3}$. If $\mathbf{y}\in Y$, then 
$|T(\mathbf{y})|=\lceil n^2/3\rceil$.
\end{prop}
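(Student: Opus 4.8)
The plan is to exploit the fundamental recursion $|T(\mathbf{y})|=|\mathbf{y}|+|T(\partial\mathbf{y})|$ together with the structural fact, recorded in Remark~\ref{rem casos 11 i 12}, that every sequence in $Y$ has its derivative in $Z^{(n-1)}=W_m^{(n-1)}$. Since $D_6$-equivalent sequences generate triangles of equal weight, it suffices to compute $|T(\mathbf{y})|$ for one representative of each equivalence class appearing in $Y$: the classes of $\mathbf{v}_1$ and $\mathbf{v}_4$ when $n\equiv 2\pmod 3$, and the classes of $\mathbf{u}_1$, $\mathbf{u}_4$ and $\mathbf{u}_7$ when $n\equiv 0\pmod 3$.

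First I would treat the case $n\equiv 2\pmod 3$, where $Y=V$ and $n-1\equiv 1\pmod 3$. By Remark~\ref{rem casos 11 i 12} we have $\partial\mathbf{v}_1=\mathbf{z}_3^{(n-1)}$ and $\partial\mathbf{v}_4=\mathbf{z}_1^{(n-1)}$. Applying Proposition~\ref{pesos T(z)}(ii) to size $n-1$ gives $|T(\mathbf{z}_1^{(n-1)})|=|T(\mathbf{z}_3^{(n-1)})|=(n-2)(n+1)/3+1$, while Remark~\ref{pesos u iv} gives $|\mathbf{v}_1|=|\mathbf{v}_4|=(n+1)/3$. Adding these, $|T(\mathbf{v}_1)|=|T(\mathbf{v}_4)|=(n+1)/3+(n-2)(n+1)/3+1=(n^2-1)/3+1=(n^2+2)/3$, which equals $\lceil n^2/3\rceil$ because $n^2\equiv 1\pmod 3$ here.

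Next I would treat $n\equiv 0\pmod 3$, where $Y=U$ and $n-1\equiv 2\pmod 3$. By Remark~\ref{rem casos 11 i 12}, $\partial\mathbf{u}_1=\mathbf{z}_3^{(n-1)}$, $\partial\mathbf{u}_4=\mathbf{z}_2^{(n-1)}$ and $\partial\mathbf{u}_7=\mathbf{z}_1^{(n-1)}$. Proposition~\ref{pesos T(z)}(i) applied to size $n-1$ gives $|T(\mathbf{z}_i^{(n-1)})|=(n-1)n/3$ for every $i$, and Remark~\ref{pesos u iv} gives $|\mathbf{u}_1|=|\mathbf{u}_4|=|\mathbf{u}_7|=n/3$. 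Hence each representative satisfies $|T(\mathbf{u}_i)|=n/3+(n-1)n/3=n^2/3=\lceil n^2/3\rceil$, the ceiling being exact since $3\mid n$.

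In both cases the remaining members of $Y$ lie in the $D_6$-classes of these representatives and therefore share the same triangle weight, which completes the argument. No genuine obstacle arises: the work is entirely bookkeeping, namely matching each representative to the specific $\mathbf{z}_j^{(n-1)}$ that is its derivative (as pinned down in Remark~\ref{rem casos 11 i 12}) and observing that the two (respectively three) equivalence classes yield the common value $\lceil n^2/3\rceil$, so that $|T(\mathbf{y})|$ is constant on $Y$.
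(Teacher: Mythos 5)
Your proposal is correct and follows essentially the same route as the paper: reduce to one representative per $D_6$-class, use $|T(\mathbf{y})|=|\mathbf{y}|+|T(\partial\mathbf{y})|$ with $\partial\mathbf{y}\in Z^{(n-1)}$, and plug in the weights from Proposition~\ref{pesos T(z)} and Remark~\ref{pesos u iv}. The only cosmetic differences are that you compute a redundant representative in each case ($\mathbf{v}_4=i(\mathbf{v}_1)$ and $\mathbf{u}_4=i(\mathbf{u}_1)$ already lie in the classes of $\mathbf{v}_1$ and $\mathbf{u}_1$) and that you use the residue-split formulas of Proposition~\ref{pesos T(z)} where the paper uses the uniform ceiling form $\lceil (n-1)n/3\rceil$.
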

\begin{proof}
 Consider first the case $n\equiv 2\pmod{3}$. Because $Y=\{\mathbf{v}_i: i\in[6]\}$ is an equivalence class, all
    sequences $\mathbf{v}_i$ generate triangles of the same
    weight. Then, it is sufficient to obtain $|T(\mathbf{v}_1)|$. We
    have $\mathbf{v}_1=\overline{100}[n]$, so
    $\partial\mathbf{v}_1=\overline{101}[n-1]=\mathbf{z}_3^{(n-1)}$. Then,
$$
|T(\mathbf{v}_1)|=|\mathbf{v}_1|+|T(\mathbf{z}_3^{(n-1)})|
=(n+1)/3+\lceil (n-1)n/3\rceil
=\lceil (n^2+1)/3\rceil=\lceil n^2/3\rceil.
$$
Now assume  $n\equiv 0 \pmod{3}$. As before, $\mathbf{u}_1=\overline{100}[n]$, $\partial\mathbf{u}_1=\mathbf{z}_3^{(n-1)}$ and
$$
|T(\mathbf{u}_1)|=|\mathbf{u}_1|+|T(\mathbf{z}_3^{(n-1)})|
=\lceil n^2/3\rceil. 
$$
The sequences $\mathbf{u}_i$ for $i\in[6]$ are equivalent to $\mathbf{u_1}$. Therefore, 
$T(\mathbf{u}_i)|=|T(\mathbf{u}_1)|=\lceil n^2/3\rceil$.
Finally, consider $\mathbf{u}_7=\overline{010}[n]$. We have $\partial\mathbf{u}_7=\overline{110}[n-1]=\mathbf{z}_1^{(n-1)}$.
Then,
$$
|T(\mathbf{u}_7)|=|\mathbf{u}_7|+|T(\mathbf{z}_1^{(n-1)})|
=n/3+\lceil (n-1)n/3\rceil
=\lceil n^2/3\rceil,
$$
and $|T(\mathbf{u}_i)|=\lceil n^2/3\rceil$ for $i\in\{7,8,9\}$.\qed
\end{proof}

\begin{remark}\normalfont
Note that $\lceil n^2/3\rceil=\lceil n(n+1)/3\rceil-\lfloor n/3\rfloor=w_m-\lfloor n/3\rfloor$.
\end{remark}
 
Computational experimentation suggests the following conjecture.

\noindent\textbf{Conjecture}
If $n\ge 11$ and $n\equiv 0,2\pmod{3}$, then $W_{m-1}=Y$, 
and $w_{m-1}=\lceil n^2/3\rceil$.

\section{Large tables}

\begin{d2}[H]
$$
\begin{array}{|l|l|l|l|}
\hline
\rule{0pt}{3ex}
\partial^2\mathbf{x} & \partial\mathbf{x} & \mathbf{x} & |\mathbf{x}|+|\partial\mathbf{x}|\ge \\
\hline
\rule{0pt}{3ex}
\mathbf{a}_1=\mathbf{1}^{(n-2)} & \overline{01}[n-1]
& \overline{0011}[n]=\mathbf{c}_1 & \\
&&\overline{1100}[n]=\mathbf{c}_4 &  \\
                                &  \overline{10}[n-1]
&  0\cdot\overline{1100}[n-1] & n/2+n/2 \\
&& 1\cdot\overline{0011}[n-1] & n/2+n/2\\
\hline
\rule{0pt}{3ex}
\mathbf{a}_2=1\cdot 0[n-3] & 0\cdot \overline{1}[n-2]
&  0\cdot\overline{01}[n-1]  &  (n-2)/2+(n-2) \\
&& 1\cdot\overline{10}[n-1]  &  (n+2)/2+(n-2)  \\
& 1\cdot\overline{0}[n-2]
 & 0\cdot \overline{1}[n-1]  & (n-1)+1\\
&&  1\cdot \overline{0}[n-1]=\mathbf{a}_2   & \\
\hline
\rule{0pt}{3ex}
\mathbf{a}_3=\overline{0}[n-3]\cdot 1 & \overline{0}[n-2]\cdot 1
& \overline{0}[n-1]\cdot 1 =\mathbf{a}_3 & \\
&& \overline{1}[n-1]\cdot 0 & (n-1)+1 \\
& \overline{1}[n-2]\cdot 0
&\overline{01}[n-1]\cdot 0 &  (n-2)/2+(n-2)\\
&&\overline{10}[n-1]\cdot 1 & (n+2)/2+(n-2)\\
\hline
\end{array}
$$
\caption{Sequences $\mathbf{x}$ such that $\partial^2\mathbf{x}\in W_1$ ($n\ge 12$ even).}
\label{d2x=ai}
\end{d2}

\begin{d2}[H]
$$
\begin{array}{|l|l|l|l|}
\hline
\rule{0pt}{3ex}
\partial^2\mathbf{x} & \partial\mathbf{x} & \mathbf{x} & |\mathbf{x}|+|\partial\mathbf{x}|\ge \\
\hline
\rule{0pt}{3ex}
\mathbf{b}_1=\overline{10}[n-2] & \overline{0110}[n-1]  & \overline{0010}[n] & (n-2)/4+(n-2)/2 \\
                                &                       & \overline{1101}[n] & 3n/4+(n-2)/2  \\
                                & \overline{1001}[n-1]  & \overline{0111}[n] & (3n-2)/4+(n-2)/2  \\
                                &                       & \overline{1000}[n] & n/4+(n-2)/2 \\
\hline
\rule{0pt}{3ex}
\mathbf{b}_2= 01\cdot\overline{0}[n-4] & 00\cdot\overline{1}[n-3] & 00\cdot\overline{01}[n-2] & (n-2)/2+(n-3)\\
                                       &                        & 11\cdot\overline{10}[n-2] &  (n+2)/2+(n-3)\\
                                       & 11\cdot\overline{0}[n-3] & 01\cdot\overline{0}[n-2]=\mathbf{b}_2 & \\
                                       &                          & 10\cdot\overline{1}[n-2] & (n-1)+2\\
\hline
\rule{0pt}{3ex}
\mathbf{b}_3=\overline{0}[n-4]\cdot 11 & \overline{0}[n-3]\cdot 10 & \overline{0}[n-2]\cdot 11=\mathbf{b}_3  & \\
                                            &                               & \overline{1}[n-2]\cdot 00 & (n-2)+1\\
                                            & \overline{1}[n-3]\cdot 01 & \overline{01}[n-2]\cdot 10   & n/2+(n-2)\\
                                            &                                & \overline{10}[n-2]\cdot 01 & n/2+(n-2) \\
\hline
\rule{0pt}{3ex}
\mathbf{b}_4= \overline{01}[n-2] & \overline{0011}[n-1] & \overline{0001}[n] & (n-2)/4+(n-2)/2\\
                                 &                      & \overline{1110}[n] & 3n/4+(n-2)/2\\
                                 & \overline{1100}[n-1] & \overline{0100}[n] & n/4+n/2\\
                                 &                      & \overline{1011}[n] & (3n-2)/4+n/2\\
\hline
\rule{0pt}{3ex}
\mathbf{b}_5= 11\cdot\overline{0}[n-4] & 01\cdot\overline{0}[n-3] & 00\cdot\overline{1}[n-2] & (n-2)+1\\
                                       &                          & 11\cdot\overline{0}[n-2]=\mathbf{b}_5 & \\
                                       & 10\cdot\overline{1}[n-3] & 01\cdot\overline{10}[n-2] & n/2+(n-2) \\
                                       &                          & 10\cdot\overline{01}[n-2] & n/2+(n-2)\\
\hline
\rule{0pt}{3ex}
\mathbf{b}_6= \overline{0}[n-4]\cdot 10 & \overline{0}[n-3]\cdot 11 & \overline{0}[n-2]\cdot 10=\mathbf{b}_6 & \\
                                        &                           & \overline{1}[n-2]\cdot 01 & (n-1)+2\\
                                        & \overline{1}[n-3]\cdot 00 & \overline{01}[n-2]\cdot 11 & (n+2)/2+(n-3)\\
                                        &                           & \overline{10}[n-2]\cdot 00 & (n-2)/2+(n-3)\\
\hline
\end{array}
$$
\caption{Sequences $\mathbf{x}$ such that $\partial^2\mathbf{x}\in W_2$ ($n\ge 12$ even).}
\label{d2x=bi}
\end{d2}

\begin{d2}[H]
$$
\begin{array}{|l|l|l|}
\hline
\rule{0pt}{3ex}
\partial^2\mathbf{x} & \partial\mathbf{x} & \mathbf{x}  \\
\hline
\rule{0pt}{3ex}
\mathbf{c}_1=\overline{0011}[n-2] & \overline{0001}[n-1] & \overline{00001111}[n] \\
                                  &                      & \overline{11110000}[n] \\
                                  & \overline{1110}[n-1] & \overline{01011010}[n]  \\
                                  &                      & \overline{10100101}[n]  \\
\hline
\rule{0pt}{3ex}
\mathbf{c}_2=101 \cdot\overline{0}[n-5] & 011\cdot\overline{0}[n-4]  & 001 \cdot\overline{0}[n-3]=\mathbf{c}_5  \\
                                        &                            & 110 \cdot\overline{1}[n-3]   \\
                                        & 100\cdot\overline{1}[n-4]  & 011 \cdot\overline{10}[n-3]   \\
                                        &                            & 100 \cdot\overline{01}[n-3]   \\
\hline
\rule{0pt}{3ex}
\mathbf{c}_3=\overline{0}[n-5]\cdot 100 & \overline{0}[n-4]\cdot 111 & \overline{0}[n-3]\cdot 101=\mathbf{c}_6   \\
                                        &                            & \overline{1}[n-3]\cdot 010   \\
                                        & \overline{1}[n-4]\cdot 000 & \overline{01}[n-4]\cdot 0000  \\
                                        &                            & \overline{10}[n-4]\cdot 1111  \\
\hline
\rule{0pt}{3ex}
\mathbf{c}_4=\overline{1100}[n-2] & \overline{0100}[n-1] & \overline{00111100}[n]   \\
                                  &                      & \overline{11000011}[n]   \\
                                  & \overline{1011}[n-1] & \overline{01101001}[n]   \\
                                  &                      & \overline{10010110}[n]   \\
\hline
\rule{0pt}{3ex}
\mathbf{c}_5=001 \cdot\overline{0}[n-5] & 000 \cdot\overline{1}[n-4] & 000 \cdot\overline{01}[n-3]   \\
                                        &                            & 111 \cdot\overline{10}[n-3]   \\
                                        & 111 \cdot\overline{0}[n-4] & 010 \cdot\overline{1}[n-3]   \\
                                        &                            & 101 \cdot\overline{0}[n-3]=\mathbf{c}_2   \\
\hline
\rule{0pt}{3ex}
\mathbf{c}_6=\overline{0}[n-5]\cdot 101 & \overline{0}[n-4]\cdot 110 & \overline{0}[n-3]\cdot 100=\mathbf{c}_3    \\
                                        &                            & \overline{1}[n-3]\cdot 011   \\
                                        & \overline{1}[n-4]\cdot 001 & \overline{01}[n-3]\cdot 001  \\
                            &                                        & \overline{10}[n-3]\cdot 110   \\
\hline
\end{array}
$$
\caption{Sequences $\mathbf{x}$ such that $\partial^2\mathbf{x}\in C^{(n-2)}$ ($n\ge 12$ even).}
\label{d2x=ci}
\end{d2}

\begin{d2}[H]
$$
\begin{array}{|l|l|l|l|l|}
\hline
\rule{0pt}{3ex}
\partial^3\mathbf{x} & \partial^2\mathbf{x} &  \partial\mathbf{x} & \mathbf{x} & \mathbf{x}_i \\
\hline
\rule{0pt}{3ex}
\mathbf{z}_1=\overline{110}[n-3]  & \overline{010}[n-2]       & \overline{001110}[n-1] & \overline{000101111010}[n] &\mathbf{x}_1 \\
                               &                              &                        & \overline{111010000101}[n] &\mathbf{x}_2 \\
                               &                              & \overline{110001}[n-1] & \overline{010000101111}[n]      & \mathbf{x}_3 \\
                               &                              &                        & \overline{101111010000}[n]      & \mathbf{x}_4 \\
                               & \overline{101}[n-2]=\mathbf{z}_3  & \overline{011}[n-1]=\mathbf{z}_2  & \overline{001}[n]        & \mathbf{x}_5 \\
                               &                              &                        & \overline{110}[n]=\mathbf{z}_1  &   \\
                               &                              & \overline{100}[n-1]    & \overline{011100}[n]    & \mathbf{x}_6 \\
                               &                              &                        & \overline{100011}[n]    & \mathbf{x}_7 \\
\hline
\rule{0pt}{3ex}
\mathbf{z}_2=\overline{011}[n-3] & \overline{001}[n-2]        & \overline{000111}[n-1] & \overline{000010111101}[n]      & \mathbf{x}_8  \\
                               &                              &                        & \overline{111101000010}[n]      & \mathbf{x}_9 \\
                               &                              & \overline{111000}[n-1] & \overline{010111101000}[n]      & \mathbf{x}_{10} \\
                               &                              &                        & \overline{101000010111}[n]      & \mathbf{x}_{11} \\
                               & \overline{110}[n-2]=\mathbf{z}_1  & \overline{010}[n-1] & \overline{001110}[n]  & \mathbf{x}_{12} \\
                               &                              &                          & \overline{110001}[n]  & \mathbf{x}_{13} \\
                               &                              & \overline{101}[n-1]=\mathbf{z}_3  &\overline{011}[n]=\mathbf{z}_2 &    \\
                               &                              &                        &\overline{100}[n]        & \mathbf{x}_{14} \\
\hline
\rule{0pt}{3ex}
\mathbf{z}_3=\overline{101}[n-3]  & \overline{011}[n-2]=\mathbf{z}_2 & \overline{001}[n-1]     & \overline{000111}[n]    & \mathbf{x}_{15} \\
                               &                              &                                 & \overline{111000}[n]   & \mathbf{x}_{16} \\
                               &                              & \overline{110}[n-1]=\mathbf{z}_1 & \overline{010}[n]     & \mathbf{x}_{17} \\
                               &                              &                                 & \overline{101}[n]=\mathbf{z}_3  &  \\
                               & \overline{100}[n-2]          & \overline{011100}[n-1] & \overline{001011110100}[n] & \mathbf{x}_{18} \\
                               &                              &                        & \overline{110100001011}[n] & \mathbf{x}_{19} \\
                               &                              & \overline{100011}[n-1] & \overline{011110100001}[n] & \mathbf{x}_{20} \\
                               &                              &                        & \overline{100001011110}[n] & \mathbf{x}_{21} \\
\hline
\end{array}
$$
\caption{Third primitives of $\mathbf{z}_i^{(n-3)}$.}
\label{p3zi}
\end{d2}

\begin{d2}[H]
$$
\begin{array}{|l|l|l|l|l|}
\hline
i  & |\partial^2\mathbf{x}_i|\le &  |\partial\mathbf{x}_i|\le & |\mathbf{x}_i|\le & s_3(\mathbf{x}_i)\le \\
\hline
1 & (n-1)/3    & (2n-3)/3 & (n+4)/2 & (9n+4)/6  \\
2 & (n-1)/3    & (2n-3)/3 & (n+4)/2 & (9n+4)/6  \\
3 & (n-1)/3    & (n+1)/2  & (n+4)/2  & (8n+13)/6  \\
4 & (n-1)/3    & (n+1)/2  & (n+4)/2  & (8n+13)/6  \\
5 & (2n-3)/3   & (2n-2)/3 & n/3 & (5n-6)/3   \\
6 & (2n-3)/3   & (n+1)/3  & (2n-2)/3 & (5n-4)/3  \\
7 & (2n-3)/3   & (n+1)/3  & (2n-2)/3 & (5n-4)/3  \\
\hline
8 & (n-2)/3    & (n-1)/2  & n/2 & (8n-7)/6 \\
9 & (n-2)/3    & (n-1)/2  & (n+4)/2 & (8n+5)/6 \\
10 & (n-2)/3  & (2n-2)/3 & (n+3)/2 & (9n+1)/6 \\
11 & (n-2)/3  & (2n-2)/3 & (n+1)/2 & (9n-5)/6 \\
12 & (2n-2)/3 & n/3  & (n+1)/2 & (9n-1)/6     \\
13 & (2n-2)/3 & n/3  & (n+2)/2 & (9n+2)/6     \\
14 & (2n-2)/3 & (2n-1)/3 & (n+2)/3 & (5n-1)/3 \\
\hline
15 & (2n-4)/3 & (n-1)/3  & n/2  & (9n-10)/6 \\
16 & (2n-4)/3 & (n-1)/3  & (n+3)/2  & (9n-1)/6 \\
17 & (2n-4)/3 & 2n/3     & (n+1)/3 & (5n-3)/3 \\
18 & n/3      & (n+1)/2      & (n+2)/2  & (8n+9)/6 \\
19 & n/3      & (n+1)/2      & (n+2)/2  & (8n+9)/6 \\
20 & n/3     & n/2 & (n+3)/2  & (8n+9)/6 \\
21 & n/3     & n/2 & (n+3)/2  & (8n+9)/6 \\
\hline
\end{array}
$$
\caption{Upper bounds of $|\partial^2\mathbf{x}_i|$, $|\partial\mathbf{x}_i|$ and $|\mathbf{x}_i|$.}
\label{upperBounds}
\end{d2}

\begin{d2}[H]
$$
\begin{array}{|r|rrr|}
\hline
n & W_{m-1}           &                      &                         \\
\hline
4 & \mathbf{x}_1=1001 & r(\mathbf{x}_1)=1110 &  \ell(\mathbf{x}_1)=0111 \\
  & \mathbf{x}_2=0110 &                      &                          \\
\hline
5 & \mathbf{x}_1=10010    & r(\mathbf{x}_1)=01011    &  \ell(\mathbf{x}_1)=10111   \\
  & i(\mathbf{x}_1)=01001 & r(i(\mathbf{x}_1))=11101 &  \ell(i(\mathbf{x}_1))=11010 \\
  & \mathbf{x}_2=01110    &                          &                             \\
\hline
6 & \mathbf{x}_1=100100    &  r(\mathbf{x}_1)=001101    & \ell(\mathbf{x}_1)=110111    \\
  & i(\mathbf{x}_1)=001001 &  r(i(\mathbf{x}_1))=111011 &  \ell(i(\mathbf{x}_1))=101100 \\
  & \mathbf{x}_2=110100    &  r(\mathbf{x}_2)=001110    & \ell(\mathbf{x}_2)=011101    \\
  & i(\mathbf{x}_2)=001011 &  r(i(\mathbf{x}_2))=101110 &  \ell(i(\mathbf{x}_2))=011100 \\
  & \mathbf{x}_3=010011    &  r(\mathbf{x}_3)=101001    & \ell(\mathbf{x}_3)=111010    \\
  & i(\mathbf{x}_3)=110010 &  r(i(\mathbf{x}_3))=010111 &  \ell(i(\mathbf{x}_3))=100101 \\
  & \mathbf{x}_4=100011    &  r(\mathbf{x}_4)=101011    & \ell(\mathbf{x}_4)=101111    \\
  & i(\mathbf{x}_4)=110001 &  r(i(\mathbf{x}_4))=111101 &  \ell(i(\mathbf{x}_4))=110101 \\
  & \mathbf{x}_5=010010    &  r(\mathbf{x}_5)=010110    & \ell(\mathbf{x}_5)=011010    \\
  & \mathbf{x}_6=100111    &                            &                              \\
  & i(\mathbf{x}_6)=111001 &                            &                              \\
  & \mathbf{x}_7=011110    &                            &                              \\

\hline
7 & \mathbf{x}_1=0110110   &                            &                              \\
\hline
8 &  \mathbf{x}_1=10010010    &  r(\mathbf{x}_1)=01011011    & \ell(\mathbf{x}_1)=10110111    \\
  & i(\mathbf{x}_1)=01001001 &  r(i(\mathbf{x}_1))=11101101 &  \ell(i(\mathbf{x}_1))=11011010 \\
  &  \mathbf{x}_2=11010110    &  r(\mathbf{x})_2=01101011    & \ell(\mathbf{x})_2=10111101    \\
\hline
9 &  \mathbf{x}_1=100100100    &  r(\mathbf{x}_1)=001101101    & \ell(\mathbf{x}_1)=110110111   \\
  & i(\mathbf{x}_1)=001001001 &  r(i(\mathbf{x}_1))=111011011 &  \ell(i(\mathbf{x}_1))=101101100 \\
  &  \mathbf{x}_2=010011011    &  r(\mathbf{x}_2)=101101011    & \ell(\mathbf{x}_2)=101111010   \\
  & i(\mathbf{x}_2)=110110010 &  r(i(\mathbf{x}_2))=010111101 &  \ell(i(\mathbf{x}_2))=110101101 \\
  & \mathbf{x}_3=011100011    &  r(\mathbf{x}_3)=101011011    & \ell(\mathbf{x}_3)=101101110   \\
  & i(\mathbf{x}_3)=110001110 &  r(i(\mathbf{x}_3))=011101101 &  \ell(i(\mathbf{x}_3))=1101100101 \\
  & \mathbf{x}_4=010010010    &  r(\mathbf{x}_4)=010110110    & \ell(\mathbf{x}_4)=011011010   \\
  & \mathbf{x}_5=011010110    &      &    \\
\hline
\end{array}
$$
\caption{The sequences in $W_{m-1}$ for $4\le n\le 9$.}
\label{tWmmu}
\end{d2}

\end{document}